\newcommand{\abs}[1]{\mathopen\lvert#1\mathclose\rvert}
\newcommand{\bigabs}[1]{\bigl\lvert#1\bigr\rvert}
\newcommand{\Bigabs}[1]{\Bigl\lvert#1\Bigr\rvert}
\newcommand{\norm}[1]{\mathopen\lVert#1\mathclose\rVert}
\newcommand{\bignorm}[1]{\mathopen\big\lVert#1\mathclose\big\rVert}
\newcommand{\floor}[1]{\lfloor#1\rfloor}
\newcommand{\N}{{\mathbb N}}
\newcommand{\R}{{\mathbb R}}
\newcommand{\Sphere}{{\mathbb S}}
\newcommand{\st}{:}
\newcommand{\cE}{\mathcal{E}}
\newcommand{\cG}{\mathcal{G}}
\newcommand{\cK}{\mathcal{K}}
\newcommand{\cS}{\mathcal{S}}
\DeclareMathOperator{\supp}{supp}
\DeclareMathOperator{\jac}{jac}
\DeclareMathOperator{\dist}{dist}
\DeclareMathOperator{\Dist}{Dist}
\newcommand{\Id}{\mathrm{Id}}
\newcommand{\dif}{\,\mathrm{d}}
\theoremstyle{plain}
\newtheorem{proposition}{Proposition}[section]
\newtheorem{lemma}[proposition]{Lemma}
\newtheorem{theorem}{Theorem}
\newtheorem{corollary}[proposition]{Corollary}
\newtheorem{claim}{Claim}
\newtheoremstyle{addendumstyle}{\topsep}{\topsep}{\itshape}{}{\bfseries}{.}{.5em plus 1pt minus 1pt}{#1 #2 to #3}
\theoremstyle{addendumstyle}
\newenvironment{proofclaim}[1][Proof of the claim]{\begin{proof}[#1]}{\end{proof}}
\theoremstyle{definition}
\newtheorem{definition}{Definition}[section]
\theoremstyle{remark}
\newtheoremstyle{claimstyle}{\topsep}{\topsep}{}{}{\bfseries}{.}{.5em plus 1pt minus 1pt}{#1}
\theoremstyle{claimstyle}
\numberwithin{equation}{section}
\date{\today}
\title[Density of bounded maps in Sobolev spaces]{Density of bounded maps in Sobolev spaces into complete manifolds}
\author[P. Bousquet]{Pierre Bousquet}
\address{
Pierre Bousquet\hfill\break\indent
Universit{\'e} de Toulouse \hfill\break\indent
Institut de Math\'ematiques de Toulouse, UMR CNRS 5219\hfill\break\indent
Universit\'e Paul Sabatier Toulouse 3\hfill\break\indent 
118 Route de Narbonne\hfill\break\indent
31062 Toulouse Cedex 9\hfill\break\indent
France}
\author[A. C. Ponce]{Augusto C. Ponce}
\address{
Augusto C. Ponce\hfill\break\indent
Universit{\'e} catholique de Louvain\hfill\break\indent
Institut de Recherche en Math{\'e}matique et Physique\hfill\break\indent
Chemin du cyclotron 2, bte L7.01.02\hfill\break\indent
1348 Louvain-la-Neuve\hfill\break\indent
Belgium}
\author[J. Van Schaftingen]{Jean Van Schaftingen}
\address{
Jean Van Schaftingen\hfill\break\indent
Universit{\'e} catholique de Louvain\hfill\break\indent
Institut de Recherche en Math{\'e}matique et Physique\hfill\break\indent
Chemin du cyclotron 2, bte L7.01.02\hfill\break\indent
1348 Louvain-la-Neuve\hfill\break\indent
Belgium}
\begin{document}

\begin{abstract}
Given a complete noncompact Riemannian manifold \(N^n\), we investigate whether the set of bounded Sobolev maps \((W^{1, p} \cap L^\infty) (Q^m; N^n)\) on the cube \(Q^{m}\) is strongly dense in the Sobolev space \(W^{1, p} (Q^m; N^n)\)
for \(1 \le p \le m\). The density always holds when \(p\) is not an integer.
When \(p\) is an integer, the density can fail, and we prove that a quantitative \textit{trimming property} is equivalent with the density. 
This new condition is ensured for example by a uniform Lipschitz geometry of \(N^{n}\). 
As a byproduct, we give necessary and sufficient conditions for the strong density of the set of smooth maps \(C^\infty (\overline{Q^m}; N^n)\) in \(W^{1, p} (Q^m; N^n)\).
\end{abstract}

\subjclass[2010]{58D15 (46E35, 46T20)}

\keywords{Strong density; Sobolev maps; bounded maps; complete manifolds}

\maketitle


\section{Introduction}

Bounded maps from the unit cube \(Q^m =(-1, 1)^m \subset \R^m\) with \(m \in \N_* = \{1, 2, \dotsc\}\) are dense in the class of Sobolev maps \(W^{1,p}(Q^m ; \R^\nu)\), and this follows from a straightforward truncation argument. 
In the setting of Sobolev maps with values into manifolds, this elementary approach is unable to handle additional constraints on the target.

More precisely, given a closed smooth submanifold \(N^n  \subset \R^\nu\), we define
the class of Sobolev maps with values into \(N^n\) as
\[
  W^{1, p}(Q^m ; N^n)
  = \Bigl\{ u \in  W^{1, p}(Q^m ; \R^\nu) : u \in N^n  \ \text{almost everywhere} \Bigr\};
\]
the space \(L^\infty (Q^m; N^n)\) of essentially bounded maps is defined similarly.
The question addressed in the present work is whether the set \((W^{1, p} \cap L^\infty)(Q^m ; N^n)\) is dense in \(W^{1, p}(Q^m ; N^n)\) with respect to the strong \(W^{1, p}\) topology.

When \(N^{n}\) is an abstract complete smooth Riemannian manifold, there exists an isometric embedding \(\iota : N^n \to \R^\nu\) such that \(\iota(N^{n})\) is closed~\cites{Nash-1956,Mueller-2009}.
This allows one to define the functional spaces \(W^{1, p} (Q^m; N^n)\) and \(L^\infty (Q^m; N^n)\), and
different embeddings of \(N^n\) yield homeomorphic spaces; see Section~\ref{sectionMapsCompleteManifold} below.
We thus consider indifferently the case where \(N^{n}\) is an embedded closed smooth submanifold of \(\R^{\nu}\) or an abstract complete smooth Riemannian manifold.

One of our motivations to the question above comes from the density problem of the set \(C^\infty(\overline{Q^m}; N^n)\) of smooth maps  in Sobolev spaces \(W^{1, p}(Q^m ; N^n)\) with values into complete manifolds.
 Even when \(N^{n}\) is compact, this is a delicate problem that has been studied by many authors.
Schoen and Uhlenbeck~\cite{Schoen-Uhlenbeck} established the density when \(p \ge m\), and Bethuel~\cite{Bethuel}  (see also Hang and Lin~\cite{Hang-Lin}) proved  in the case \(1 \le p < m\) that density holds if and only if the homotopy group \(\pi_{\floor{p}}(N^n)\) is trivial.
The latter condition means that every continuous map \(f : \Sphere^{\floor{p}} \to N^n\) on the \(\floor{p}\)-dimensional sphere is homotopic to a constant map, where \(\floor{p}\) is the largest integer less than or equal to \(p\).
For complete manifolds, the same conclusions hold provided that \(W^{1, p}(Q^m ; N^n)\) is replaced by the smaller space \((W^{1, p} \cap L^\infty)(Q^m; N^n)\); see Section~\ref{sectionDensitySmoothMaps} below.
The strong density of smooth maps in \(W^{1, p}(Q^m ; N^n)\) is thus equivalent to the density of \((W^{1, p} \cap L^\infty)(Q^m; N^n)\) in the latter space. 

When \(p > m\), Sobolev maps on the cube \(Q^m\) are bounded, and even H\"older continuous, whence \(W^{1, p} \cap L^{\infty} = W^{1, p}\). 
When \(p \leq m\), we establish that the density of bounded Sobolev maps depends on whether \(p\) is an integer or not.

\begin{theorem}\label{theoremMainNonInteger}
For every \(1 \le p \leq m\) such that \(p \not\in \N\), the set \((W^{1, p}\cap L^{\infty})(Q^m ; N^n)\)  is dense in \(W^{1,p}(Q^m ; N^n)\).
\end{theorem}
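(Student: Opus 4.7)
Write $k = \floor{p}$. Since $1 \le p \le m$ and $p \notin \N$, we have $1 \le k \le m-1$ and $k < p < k+1$. The strict inequality $p < k+1$ will be the crux of the argument: radial projections of $Q^m$ onto a $k$-dimensional skeleton have finite $W^{1, p}$-energy precisely because their singular set has codimension $k+1 > p$.

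My plan is to approximate $u \in W^{1, p}(Q^m; N^n)$ by compositions $u \circ \Phi_{\eta, a}$ depending on two parameters. I partition $Q^m$ into cubes of size $\eta > 0$, and I let $K^k_\eta$ denote the union of their $k$-dimensional faces. For a translation parameter $a \in [0, \eta]^m$, iterated radial projection from the centers of the shifted faces of dimension at least $k+1$ defines a map $\Phi_{\eta, a} : Q^m \setminus T_{\eta, a} \to K^k_\eta + a$, where $T_{\eta, a}$ is the shifted ``dual skeleton'' of dimension $m-k-1$. Because $T_{\eta, a}$ has codimension $k+1$ and $p < k+1$, a direct computation gives $\Phi_{\eta, a} \in W^{1, p}(Q^m; \R^m)$ for every $a$ in a full-measure subset of $[0, \eta]^m$, together with explicit bounds on $\nabla \Phi_{\eta, a}$ in $L^p$.

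The composition $u_{\eta, a} := u \circ \Phi_{\eta, a}$ is the candidate approximant, and it lies in $L^\infty$. Indeed, by Fubini, for almost every $a$ the restriction of $u$ to each $k$-dimensional face of $K^k_\eta + a$ belongs to $W^{1, p}$ of that face; since $p > k$, the Morrey embedding $W^{1, p} \hookrightarrow C^0$ on each $k$-face yields a continuous, hence bounded, restriction on the finite union $K^k_\eta + a$. Therefore $u_{\eta, a} \in (W^{1, p} \cap L^\infty)(Q^m; N^n)$.

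The main obstacle, as expected, is to establish the $W^{1, p}$-convergence $u_{\eta, a} \to u$ as $\eta \to 0$ for an appropriately chosen $a$. I plan to integrate over the shift $a \in [0, \eta]^m$ and apply the change-of-variables formula to obtain an estimate of the form
\[
\frac{1}{\eta^m} \int_{[0, \eta]^m} \norm{u_{\eta, a} - u}_{W^{1, p}(Q^m)}^p \dif a \le \omega(\eta),
\]
with $\omega(\eta) \to 0$. This passes through the fact that $\Phi_{\eta, a}(x) \to x$ uniformly in $x$, combined with a Jacobian estimate controlling $\norm{(\nabla u \circ \Phi_{\eta, a})\, \nabla \Phi_{\eta, a}}_{L^p}$ by a quantity that reduces, after averaging, to a constant multiple of $\norm{\nabla u}_{L^p}$ concentrated on sets of vanishing measure. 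Selecting a generic $a$ then produces the approximating sequence. This averaging argument, in the spirit of the projection method of Hardt--Kinderlehrer--Lin, Bethuel, and Hang--Lin, is the technical heart of the proof.
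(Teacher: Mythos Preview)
Your approach has a genuine gap at the convergence step. The averaging argument in the spirit of Federer--Fleming and Hardt--Kinderlehrer--Lin yields only \emph{boundedness} of the energy of \(u_{\eta,a} = u\circ\Phi_{\eta,a}\), namely
\[
\frac{1}{\eta^m}\int_{[0,\eta]^m}\norm{D u_{\eta,a}}_{L^p(Q^m)}^p\,\dif a
\le C\,\norm{Du}_{L^p(Q^m)}^p,
\]
but \emph{not} the convergence \(\norm{D u_{\eta,a}-Du}_{L^p}\to 0\). The reason is that \(D\Phi_{\eta,a}\) is nowhere close to the identity: on each \(\eta\)-cube the zero-degree homogenization is genuinely different from the identity even away from the dual skeleton. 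Concretely, take \(m=2\), \(k=1\), \(1<p<2\), and \(u(x_1,x_2)=x_1\). On each cube of inradius \(\eta\), in the sector projecting to the horizontal edges one has \(\abs{D(u\circ\Phi_{\eta,a})-Du}\gtrsim \eta/\abs{x_2-c_2}\), whence \(\int_{\sigma}\abs{D(u\circ\Phi_{\eta,a})-Du}^p\gtrsim \eta^2\); summing over \(\sim\eta^{-2}\) cubes gives a quantity bounded \emph{below} by a positive constant independent of \(\eta\), and averaging over \(a\) does not help since the computation is translation invariant. Thus your \(\omega(\eta)\not\to 0\) in general. The phrase ``concentrated on sets of vanishing measure'' is precisely what fails: the discrepancy between \(D\Phi_{\eta,a}\) and \(\Id\) is spread over the whole cube.

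This is why the paper cannot dispense with the good/bad cube dichotomy. On the good cubes \(\cG_\eta^m\) the map is approximated by convolution at scale \(\sim\eta\) followed by the nearest point projection~\(\Pi\); this \emph{does} converge in \(W^{1,p}\), but it requires knowing that the convolved map lands in a tubular neighborhood of \(N^n\), which is exactly what the good-cube condition (small rescaled energy and bounded mean geodesic distance) guarantees. The zero-degree homogenization --- your \(\Phi_{\eta,a}\) --- is used \emph{only} on the bad cubes \(\cE_\eta^m\), and there one does not attempt convergence cube-by-cube at all: one merely bounds \(\norm{Du^{\mathrm{be}}_\eta}_{L^p(E^m_\eta)}\le C\norm{Du}_{L^p(E^m_\eta+Q^m_{2\rho\eta})}\) and then lets the measure of the bad set tend to zero. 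The opening of \(u\) around the \(\ell\)-skeleton (Proposition~\ref{openingpropGeneral}) is what makes the restriction of the opened map to \(E^\ell_\eta\) a legitimate \(W^{1,p}\) map with controlled energy, playing the role of your averaging over \(a\) but only locally near the skeleton rather than globally.
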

The case where \(p\) is an integer is more subtle and the answer involves analytical properties of the manifold \(N^n\).
This surprising phenomenon arises even in the case \(p = m\).
In the related problem of density of smooth maps in \(W^{1, m}(Q^m; N^n)\) when \(N^n\) is a compact manifold, this critical case always has an affirmative answer, regardless of \(\pi_{m}(N^{n})\), and is a straightforward consequence of the fact that \(W^{1, m}\) maps embed into the class of vanishing mean oscillation (VMO) maps \citelist{\cite{Brezis-Nirenberg}\cite{Schoen-Uhlenbeck}}.
For noncompact manifolds, this VMO property is not sufficient to imply the density of bounded maps in \(W^{1, m}\), even if \(N^n\) is diffeomorphic to the Euclidean space \(\R^n\); see Section~\ref{sectionCounterexample} below.
In fact, for integer exponents \(p\) this density problem is equivalent to the following analytical assumption on the target \(N^{n}\):

\begin{definition}
\label{definitionExtensionProperty}
Given \(p \in \N_{*}\), the manifold \(N^n\) satisfies the \emph{trimming property of dimension \(p\)} whenever
there exists a constant \(C > 0\) such that each map \(f \in C^\infty(\partial Q^p; N^n)\) with a Sobolev extension \(u \in W^{1, p}(Q^{p}; N^n)\) also has a smooth extension \(v \in C^\infty(\overline{Q^{p}} ; N^n)\) such that
\[
\norm{Dv}_{L^{p}(Q^{p})}\leq C \norm{Du}_{L^{p}(Q^{p})}.
\]
\end{definition}

The use of \(C^{\infty}\) maps is not essential, and other classes like Lipschitz maps or continuous Sobolev maps (\(W^{1, p} \cap C^{0}\)) yield equivalent definitions of the trimming property; see e.g.~Proposition~\ref{proposition_continuous_trimming_property}.
The trimming property is satisfied by any manifold \(N^n\) with uniform Lipschitz geometry in the sense of Definition~\ref{definition_bounded_geometry} below, for example when \(N^n\) is the covering space of a compact manifold~\cites{Bethuel-Chiron, Pakzad-Riviere} or when \(N^n\) is a Lie group or a homogeneous space~\cites{DaiShojiUrakawa1997,Uhlenbeck1989}.
We also observe that every complete manifold satisfies the trimming property of dimension \(1\):
it suffices to take as \(v\) any shortest geodesic connecting the points \(f(-1)\) and \(f(1)\).   

The answer to the density problem for integer exponents can now be stated as follows:

\begin{theorem}\label{theorem_Ap_CNS}
For every \(p \in \{1, \dotsc, m\}\), the set \((W^{1, p} \cap L^{\infty})(Q^m ; N^n)\) is  dense in \(W^{1, p}(Q^m ; N^n)\)  if and only if \(N^n\) satisfies the trimming property of dimension \(p\).
\end{theorem}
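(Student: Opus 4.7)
The plan is to prove the two implications separately. Both directions rely on the equivalence, recalled in the introduction, between the density of \((W^{1,p}\cap L^\infty)(Q^m;N^n)\) and the density of \(C^\infty(\overline{Q^m};N^n)\) in \(W^{1,p}(Q^m;N^n)\).

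\textbf{Sufficiency (trimming \(\Rightarrow\) density).} Given \(u\in W^{1,p}(Q^m;N^n)\) and \(\varepsilon>0\), cover \(\overline{Q^m}\) by a grid of cubes of small mesh \(\eta\), translated (by a Fubini averaging over grid translations) so that \(u\) has good trace properties on every skeleton \(K^k\). Build a bounded approximation by adjusting \(u\) successively along \(K^0\subset K^1\subset\cdots\subset K^m=\overline{Q^m}\). On \(K^{p-1}\), the Morrey embedding \(W^{1,p}\hookrightarrow C^0\) in dimension \(p-1\) allows one to smooth \(u|_{K^{p-1}}\) into a bounded continuous map with controlled \(W^{1,p}\) deviation. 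On each closed \(p\)-face \(\sigma^p\), the smooth boundary datum thus obtained on \(\partial\sigma^p\subset K^{p-1}\) admits the Sobolev extension \(u|_{\sigma^p}\); the trimming property, applied after a rescaling that leaves the \(L^p\)-norm of the gradient invariant, then yields a smooth extension on \(\sigma^p\) with \(L^p\)-energy bounded by \(C\|Du\|_{L^p(\sigma^p)}\). Finally, extend cell by cell to \(K^{p+1},\ldots,K^m\) by a standard radial homogeneous construction from an interior point, which is Sobolev on cells of dimension strictly larger than \(p\) and contributes only a lower-order term. Localising all modifications to cubes where the local energy of \(Du\) exceeds a threshold, the resulting bounded map differs from \(u\) in \(W^{1,p}\) by a controlled multiple of \(\|Du\|_{L^p(\mathrm{bad})}\), which tends to \(0\) as \(\eta\to 0\).

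\textbf{Necessity (density \(\Rightarrow\) trimming).} Argue by contrapositive: suppose the trimming property of dimension \(p\) fails. Pick \(f\in C^\infty(\partial Q^p;N^n)\) admitting a Sobolev extension \(u_0\in W^{1,p}(Q^p;N^n)\) such that every smooth (equivalently, via the equivalence of density notions, every bounded) extension of \(f\) has \(L^p\)-energy exceeding \(k\,\|Du_0\|_{L^p(Q^p)}\) for arbitrarily large \(k\). Define the cylindrical map \(U:Q^m\to N^n\) by \(U(x',x''):=u_0(x')\) on \(Q^m\cong Q^p\times Q^{m-p}\); it lies in \(W^{1,p}(Q^m;N^n)\). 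If density held, there would exist bounded approximations \(V\) arbitrarily close to \(U\) in \(W^{1,p}\). By Fubini, on a large-measure set of slices \(x''\in Q^{m-p}\) the map \(V(\cdot,x'')\) is a bounded Sobolev extension of a boundary datum close to \(f\) in the trace norm, with slice energy close to \(\|Du_0\|_{L^p}\). A tubular retraction onto \(N^n\) in a collar of \(\partial Q^p\), legitimate once the trace of \(V(\cdot,x'')\) is brought pointwise close to \(f\) through a further smooth approximation, corrects this boundary datum to \(f\) itself at controlled energy cost. The resulting family of bounded extensions of \(f\) with uniformly bounded \(L^p\)-energy contradicts the failure of trimming.

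\textbf{Main obstacle.} The most delicate step is the \((p-1)\to p\) skeleton transition in the sufficiency proof: the trimming property requires \emph{smooth} boundary data, but the data naturally supplied by \(u|_{K^{p-1}}\) is only Sobolev and must first be smoothed without spoiling the uniform energy estimate controlled by the trimming constant. A secondary obstacle, on the necessity side, is that at the integer exponent \(p\) the trace space \(W^{1-1/p,p}(\partial Q^p)\) fails to embed into \(L^\infty\), so correcting the boundary trace of \(V(\cdot,x'')\) to \(f\) forces one to invoke a further smooth approximation of \(V\) in order to obtain the pointwise closeness required by the retraction in \(N^n\).
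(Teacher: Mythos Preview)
Your sufficiency argument has a genuine gap. You describe the modification on bad cubes (smooth \(u\) on \(K^{p-1}\), apply trimming on \(p\)-faces, radially extend to higher cells) and then say you ``localise all modifications to cubes where the local energy of \(Du\) exceeds a threshold.'' But on the remaining good cubes you leave \(u\) untouched, and \(u\) is not bounded there: for \(p\le m\), small rescaled energy on a cube does not force boundedness. In the compact-target situation this is repaired by convolving on good cubes and projecting back to \(N^n\); the small mean oscillation keeps the convolution in the tubular neighbourhood. For a noncompact \(N^n\) there is no tubular neighbourhood of uniform width, so this step fails. The paper's remedy, absent from your plan, is to strengthen the notion of good cube: besides the energy bound, one requires that the average geodesic distance \(\fint \operatorname{dist}_{N^n}(u,a)\) to a fixed point \(a\in N^n\) be small. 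A geodesic ball is relatively compact, hence has a uniform tubular neighbourhood, and the convolution--projection argument goes through on good cubes. One must also manage the interface between good and bad cubes with an adaptive convolution scale \(\psi_\eta\) that vanishes smoothly as one enters the bad region, and use the opening technique (not merely an averaged grid translation) to obtain a map that is genuinely \(\ell\)-dimensional near \(E^\ell_\eta\) so that the skeleton estimates and the gluing with the smoothed map are consistent. Without these ingredients your candidate approximation is either not bounded or does not converge to \(u\).

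Your necessity argument mis-states the negation of the trimming property. Failure of trimming does not give a \emph{single} pair \((f,u_0)\) for which the optimal smooth-extension energy exceeds \(k\|Du_0\|_{L^p}\) for every \(k\): any smooth map on \(\overline{Q^p}\) has finite energy, so that would force \(f\) to have no smooth extension at all, which is only one failure mode. The correct negation yields a sequence \((f_k,u_k)\) with ratio diverging, and your cylindrical construction would then have to pack rescaled copies of these into one map. The paper sidesteps this by arguing directly. It first reduces \(p<m\) to \(p=m\) via exactly your cylindrical slicing; then, for \(p=m\), given \(u\) with \(u|_{\partial Q^m}\in W^{1,m}\), it forms the zero-degree extension \(w\) near the boundary, approximates \(w\) by smooth maps using the assumed density, selects a radius \(r\) on which the approximation converges \emph{uniformly} on \(\partial Q^m_r\) by Morrey in dimension \(m-1\), and glues across \(\partial Q^m_r\) through the nearest-point projection. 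This directly produces the continuous extension with \(\|Dv\|_{L^m}\le C\|Du\|_{L^m}\), and Proposition~\ref{proposition_continuous_trimming_property} upgrades it to the smooth extension required by Definition~\ref{definitionExtensionProperty}.
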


As a consequence of Theorems~\ref{theoremMainNonInteger} and~\ref{theorem_Ap_CNS}, we characterize the class of complete manifolds \(N^n\) for which smooth maps are dense in the space \(W^{1,p}(Q^m ; N^n)\).
For non-integer values of the exponent \(p\) we have:

\begin{corollary}\label{corollaryDensitySmoothNonInteger}
For every \(1 \le p \leq m\) such that \(p \not\in \N\), the set \(C^{\infty}(\overline{Q^m} ; N^n)\) is dense in \(W^{1,p}(Q^m ; N^n)\) if and only if \(\pi_{\floor{p}}(N^n)\) is trivial.
\end{corollary}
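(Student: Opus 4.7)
\medskip

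The plan is to factor the corollary through Theorem~\ref{theoremMainNonInteger}, reducing to the known Bethuel--Hang--Lin characterization in the bounded setting. Concretely, since $C^\infty(\overline{Q^m}; N^n) \subset (W^{1,p} \cap L^\infty)(Q^m; N^n) \subset W^{1,p}(Q^m; N^n)$, the density of smooth maps in $W^{1,p}$ is equivalent to the conjunction of (a)~density of $C^\infty(\overline{Q^m}; N^n)$ in $(W^{1,p} \cap L^\infty)(Q^m; N^n)$ and (b)~density of $(W^{1,p} \cap L^\infty)(Q^m; N^n)$ in $W^{1,p}(Q^m; N^n)$. For non-integer $p$, Theorem~\ref{theoremMainNonInteger} gives (b) unconditionally.

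It then remains to show that (a) is equivalent to the triviality of $\pi_{\floor{p}}(N^n)$. This is exactly the content of the Bethuel--Hang--Lin theorem, extended to complete targets via the bounded-maps framework announced in Section~\ref{sectionDensitySmoothMaps}: once a map takes its values in a bounded subset of $N^n$, the relevant approximation arguments can be localized in a tubular neighborhood of a \emph{compact} piece of $N^n$, so that the classical proof applies verbatim. The necessary direction uses the standard homotopical obstruction: if $\pi_{\floor{p}}(N^n) \neq 0$, one produces a map $u \in W^{1,p}(Q^m;N^n)$ (typically of the form $x \mapsto \phi(x/|x|)$ with $\phi : \Sphere^{\floor{p}} \to N^n$ non-contractible, extended trivially in the remaining directions) that cannot be approximated by smooth maps; note that such $u$ is automatically bounded, so the obstruction persists even in $(W^{1,p} \cap L^\infty)(Q^m; N^n)$ and, a fortiori, in the larger space $W^{1,p}(Q^m; N^n)$.

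Combining these ingredients gives both directions of the corollary. For the \emph{if} direction: assume $\pi_{\floor{p}}(N^n) = 0$, take $u \in W^{1,p}(Q^m;N^n)$, use Theorem~\ref{theoremMainNonInteger} to approximate it in $W^{1,p}$ by a bounded map $\tilde u$, and then approximate $\tilde u$ in $W^{1,p}$ by smooth maps using the bounded version of Bethuel's theorem. For the \emph{only if} direction: a singular bounded map exhibiting the obstruction to $\pi_{\floor{p}}(N^n) \neq 0$ already lies in $W^{1,p}(Q^m;N^n)$ and is not a strong $W^{1,p}$ limit of smooth maps, since a smooth approximation would in particular provide, after truncation, a smooth approximation in the bounded class.

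The main obstacle is not the logical skeleton above, which is almost a tautology given the two input theorems, but rather ensuring that the Bethuel--Hang--Lin theorem is legitimately available in the \emph{complete} noncompact setting at the level of $(W^{1,p} \cap L^\infty)(Q^m;N^n)$. This is precisely the point addressed in Section~\ref{sectionDensitySmoothMaps}; once the reduction to a relatively compact portion of $N^n$ is in place, the homotopical construction of singular maps and the dipole-removal approximation scheme transfer without genuinely new analytic difficulty.
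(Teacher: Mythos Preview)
Your proposal is correct and follows essentially the same route as the paper: use Theorem~\ref{theoremMainNonInteger} to reduce to the bounded class, invoke Proposition~\ref{proposition_class_R} (the Bethuel--Hang--Lin statement in $(W^{1,p}\cap L^\infty)(Q^m;N^n)$) for the sufficiency, and the standard homotopical obstruction (which already lives in the bounded class) for the necessity. The paper organizes the two implications slightly more directly rather than through your explicit (a)/(b) factorization, but the content is the same; your remark about ``truncation'' in the only-if direction is unnecessary since the obstructing map is already bounded.
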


For integer values of \(p\), the characterization becomes:
\begin{corollary}\label{corollaryDensitySmoothInteger}
\begin{itemize}
\item[\textbf{Case \boldmath$p=1$:}] The set \(C^{\infty}(\overline{Q^m} ; N^n)\) is  dense in \(W^{1, 1}(Q^m ; N^n)\)  if and only if \(\pi_{1}(N^n)\) is trivial.
\item[\textbf{Case \boldmath$p \in \{2, \dotsc, m-1\}$:}] The set \(C^{\infty}(\overline{Q^m} ; N^n)\) is  dense in \(W^{1, p}(Q^m ; N^n)\)  if and only if \(\pi_{p}(N^n)\) is trivial and \(N^n\) satisfies the trimming property of dimension \(p\).
\item[\textbf{Case \boldmath$p=m$:}] The set \(C^{\infty}(\overline{Q^m} ; N^n)\) is dense in \(W^{1, m}(Q^m ; N^n)\)  if and only if  \(N^n\) satisfies the trimming property of dimension \(m\).
\end{itemize}
\end{corollary}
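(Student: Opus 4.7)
The plan is to factor the question into two independent approximation steps and then read off each case by combining them. For a map $u \in W^{1,p}(Q^m; N^n)$ I would first seek a bounded approximation $v \in (W^{1,p} \cap L^\infty)(Q^m; N^n)$, and then, separately, approximate the bounded map $v$ by smooth maps in $C^\infty(\overline{Q^m}; N^n)$. The density of $C^\infty(\overline{Q^m}; N^n)$ in $W^{1,p}(Q^m; N^n)$ is equivalent to the conjunction of
\begin{enumerate}
\item[(a)] density of $(W^{1,p} \cap L^\infty)(Q^m; N^n)$ in $W^{1,p}(Q^m; N^n)$, and
\item[(b)] density of $C^\infty(\overline{Q^m}; N^n)$ in $(W^{1,p} \cap L^\infty)(Q^m; N^n)$ with respect to the $W^{1,p}$ topology.
\end{enumerate}
Sufficiency follows by a standard diagonal extraction; necessity is immediate, since every $C^\infty(\overline{Q^m}; N^n)$ map is bounded and so is contained in $(W^{1,p} \cap L^\infty)(Q^m; N^n)$, so that any smooth $W^{1,p}$-approximating sequence already lies in the bounded class.

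Step (a) is exactly the content of Theorem~\ref{theorem_Ap_CNS}: it holds precisely when $N^n$ satisfies the trimming property of dimension $p$. For $p = 1$ this property is automatic, by the remark in the introduction that every complete manifold enjoys the one-dimensional trimming property, and so disappears from the statement. Step (b), once one restricts to bounded maps, is governed by the same topological obstruction as in the compact case, as announced in Section~\ref{sectionDensitySmoothMaps}: for $1 \le p < m$ it is equivalent to $\pi_{\floor{p}}(N^n)$ being trivial, which for integer $p$ reads $\pi_p(N^n) = 0$, while for $p = m$ it holds unconditionally thanks to the VMO approximation argument of Schoen-Uhlenbeck and Brezis-Nirenberg, applied inside a compact neighborhood of the essential image of a bounded map.

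Combining (a) and (b) according to the value of $p$ yields the three cases of the corollary. For $p = 1$ only $\pi_1(N^n) = 0$ survives; for $p \in \{2, \dotsc, m-1\}$ both the trimming property of dimension $p$ and $\pi_p(N^n) = 0$ are needed; for $p = m$ only the trimming property of dimension $m$ remains, since (b) is then free.

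The main step requiring genuine work, beyond Theorem~\ref{theorem_Ap_CNS}, is the transfer of the Bethuel-Hang-Lin density theory from the compact setting to bounded Sobolev maps into a complete manifold. The transfer is possible because a bounded Sobolev map has its essential image in a bounded piece of $N^n$, which can be enclosed in a smoothly embedded compact region; within such a region the classical density results can be applied and the usual homotopy obstruction in $\pi_{\floor{p}}(N^n)$ is preserved. I expect this reduction, carried out in detail in Section~\ref{sectionDensitySmoothMaps}, to be the main technical point, since one must take care that the approximation process does not escape the bounded class and that the obstruction classes are read off correctly in the ambient complete manifold.
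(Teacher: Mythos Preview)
Your proposal is correct and follows essentially the same approach as the paper: both factor the problem into (a) approximating by bounded maps (governed by Theorem~\ref{theorem_Ap_CNS} and the trimming property, automatic for $p=1$) and (b) approximating bounded maps by smooth maps (governed by Proposition~\ref{proposition_class_R} and the $\pi_{\lfloor p\rfloor}$ obstruction for $p<m$, and by Proposition~\ref{proposition_smooth_bounded_density} for $p=m$). The only minor point is that for the necessity of $\pi_p(N^n)$ being trivial, the paper invokes the classical obstruction argument of Bethuel--Zheng directly rather than passing through your step~(b); but since the obstructing map is itself bounded, this amounts to the same thing.
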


In the more general setting of Sobolev maps into noncompact metric spaces~\cites{HKST, Hajlasz}, Haj\l asz and Schikorra~\cite{Hajlasz-Schikorra} have recently given a necessary condition for the density of Lipschitz maps in terms of an \((m-1)\)-Lipschitz connectedness property.

The strategy of the proofs of Theorems~\ref{theoremMainNonInteger} and~\ref{theorem_Ap_CNS} above is based on the \emph{good} and \emph{bad cube} method introduced by Bethuel~\cite{Bethuel} for a compact manifold \(N^n\).
More precisely, we divide the domain \(Q^m\) as a union of small cubes and we approximate a  map \(u \in W^{1,p}(Q^m ; N^n)\) in two different ways, depending on the properties of \(u\)  on each cube.

On the \emph{good cubes},  we  approximate \(u\)  by convolution with a smooth kernel. 
In general, such an approximation does not take its values in \(N^n\), so that we must project it back on the target manifold using a retraction \(\Pi\) on \(N^n\). Such a strategy works when 
\begin{compactitem}
\item the retraction \(\Pi\) is well-defined on a tubular neighborhood of positive \emph{uniform} width around \(N^n\),
\item the convolution of \(u\) with a smooth kernel takes its values in this tubular neighborhood.
\end{compactitem}

The first condition is automatically satisfied when \(N^n\) is compact, while the second one holds true when \(u\) has small mean oscillation.
In Bethuel's and Hang--Lin's works, the latter condition on \(u\) is used to define good cubes (see also \cite{Bousquet-Ponce-VanSchaftingen}*{p.~797}) in the sense that a cube \(\sigma^{m}_{\eta}\) of inradius \(\eta\) is good if the rescaled energy satisfies
\[{}
\frac{1}{\eta^{m - p}} \int_{\sigma_{\eta}^{m}}{\abs{Du}^{p}}
\lesssim \delta,
\]
for some small parameter \(\delta > 0\) depending on the width of the tubular neighborhood; the inradius is half the edge-length of the cube.
The connection between such a condition and the oscillation of \(u\) on \(\sigma_{\eta}^{m}\) can be made using the Poincaré--Wirtinger inequality:
\[{}
\fint_{\sigma_{\eta}^{m}}\fint_{\sigma_{\eta}^{m}}{\abs{u(x) - u(y)}} \dif x \dif y
\le
\frac{C}{\eta^{m - p}} \int_{\sigma_{\eta}^{m}}{\abs{Du}^{p}}.
\]

Noncompact submanifolds, however, need not have a global tubular neighborhood with positive uniform width. 
For instance, it is impossible to find such a tubular neighborhood for any isometric embedding of the hyperbolic space in a Euclidean space because the volumes of hyperbolic balls grow exponentially with respect to the radius. 
Since geodesic balls in \(N^{n}\) do have a tubular neighborhood with uniform width, we thus modify the classical definition of a good cube by further requiring that most of the points of \(u\) lie in a fixed geodesic ball.
It is then  possible to  perform the approximation by convolution and projection.
The parameter scale of this convolution is not constant but depends   on the distance to the bad cubes. Such an \emph{adaptive smoothing} (Section~\ref{sectionAdaptiveSmoothing}) is used to smoothen the transition from good cubes to bad cubes.

On the bad cubes, we modify \(u\) using the \emph{opening technique} (Section~\ref{sectionOpening}). 
This operation was introduced by Brezis and Li~\cite{Brezis-Li}, and was then pursued by the authors in~\cite{Bousquet-Ponce-VanSchaftingen} in the framework of higher order Sobolev spaces.
More precisely, given an \(\ell\)-dimensional grid in the bad cubes with \(\ell \in \{0, \dotsc, m - 1\}\), we use the opening technique to slightly modify \(u\) near the grid to obtain a new map \(u^{\mathrm{op}}\) that locally depends on \(\ell\)~variables and whose restriction to the grid belongs to \(W^{1, p}\). 

Taking \(\ell = \floor{p}\) for \(p < m\), we next perform a  \emph{zero-degree homogenization} (Section~\ref{sectionHomogenization}) which consists in propagating the values of \(u^{\mathrm{op}}\) on the \(\floor{p}\)-dimensional grid to all \(m\)-dimensional bad cubes~\cites{Bethuel-Zheng, Bethuel, Hang-Lin}. 
When \(p \not\in \N_{*}\), the Morrey--Sobolev embedding implies that \(u^{\mathrm{op}}\) is bounded on the \(\floor{p}\)-dimensional grid, and we end up with a bounded Sobolev map on the bad cubes.
When \(p \in \N_{*}\), the resulting map need not be bounded, so before applying the zero-degree homogenization, we need to do some preliminary work that relies on the \emph{trimming property of dimension \(p\)}. 
Indeed, since \(u^{\mathrm{op}}\) is continuous on the \((p - 1)\)-dimensional grid, the trimming property allows one to replace \(u^{\mathrm{op}}\) on the \(p\)-dimensional grid by a bounded map which coincides with \(u^{\mathrm{op}}\) on the \((p - 1)\)-dimensional grid; see Section~\ref{sectionExtensionProperty}.
The resulting map obtained by zero-degree homogenization is now bounded as in the non-integer case.
Although we obtain a function which can be quite different from \(u\) on the bad cubes, we can conclude using the fact that most of the cubes are good.

We now describe the plan of the paper. 
In Section~\ref{sectionMapsCompleteManifold}, we explain why the definition of the Sobolev space  \(W^{1,p}(Q^m ; N^n)\) is independent of the specific isometric embedding of \(N^n\). 
In Section~\ref{sectionDensitySmoothMaps}, we investigate the density of smooth maps \(C^{\infty}(\overline{Q^m}; N^n)\) in \((W^{1,p}\cap L^{\infty})(Q^m;N^n)\) using results from the case of compact-target manifolds. 
In Section~\ref{sectionCounterexample}, we present a counterexample to the density of bounded maps in the class \(W^{1,m}(Q^m ; N^n)\), where \(N^n\) is a suitable embedding of \(\R^n\) in \(\R^{n+1}\). In Section~\ref{sectionMainTools}, we have collected the main tools that will be used in the proofs of Theorems~\ref{theoremMainNonInteger} and~\ref{theorem_Ap_CNS}: the opening technique, the adaptive smoothing and the zero-degree homogenization. 
In Section~\ref{sectionExtensionProperty}, we give equivalent formulations of the trimming property and we establish the necessity of this condition for the density of bounded maps when \(p \in \N_{*}\). 
Finally, we present the proofs of Theorems~\ref{theoremMainNonInteger} and~\ref{theorem_Ap_CNS} in Section~\ref{sectionProofs}.


\section{Maps into a complete manifold}%
\label{sectionMapsCompleteManifold}%
\subsection{Sobolev maps}%
\label{sectionIntrinsicSobolevMaps}%
In the definition of the Sobolev spaces  introduced above, we have used an isometric embedding \(\iota : N^n \to \R^{\nu}\) of the target smooth Riemannian manifold \(N^n\).{}
We claim that such a definition does not depend on the embedding, in the following sense: if \(\iota_1\) and \(\iota_2\) are two isometric embeddings of \(N^n\) into \(\R^{\nu_1}\) and \(\R^{\nu_2}\) respectively, then the two resulting Sobolev spaces are homeomorphic.
For the convenience of the reader, we provide below a self-contained proof of this fact.
Alternatively, it is possible to define intrinsically  Sobolev spaces of maps into complete Riemannian manifolds without reference to any isometric embedding of the target manifold \(N^n\). 
Such an approach turns out to be equivalent to the definition that relies on an embedding of \(N^n\), see  \cite{ConventVanSchaftingen2016}*{Propositions~2.7 and~4.4} and also~\cite{FocardiSpadaro2013}.

\begin{proposition}
\label{propositionIntrinsicSobolev}
Let \(\iota_k : N^n \to \R^{\nu_k}\), with \(k \in \{1, 2\}\), be two isometric embeddings of a complete Riemannian manifold \(N^n\) and let \(1 \le p < \infty\).
\begin{enumerate}[\((i)\)]
\item Given a measurable map \(u : Q^m \to N^n\), \(\iota_1 \circ u \in W^{1, p} (Q^m; \R^{\nu_1})\) if and only if \(\iota_2 \circ u \in W^{1, p} (Q^m; \R^{\nu_2})\).
\item Given a sequence of measurable maps \(u_j : Q^m \to N^n\), with \(j \in \N\), we have that 
\((\iota_1 \circ u_j)_{j \in \N}\) converges to \(\iota_1 \circ u\) in \(W^{1, p} (Q^m; \R^{\nu_1})\) if and only if \((\iota_2 \circ u_j)_{j \in \N}\) converges to \(\iota_2 \circ u\) in \(W^{1, p} (Q^m; \R^{\nu_2})\).{}
\end{enumerate}
\end{proposition}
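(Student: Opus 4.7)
The plan is to exploit the fact that the transition map \(\Phi := \iota_2 \circ \iota_1^{-1}\) from \(\iota_1(N^n)\) onto \(\iota_2(N^n)\) is an isometry with respect to the intrinsic Riemannian distance, even though it is generally not Lipschitz between the two Euclidean subsets. The starting point is the observation that around each \(q \in N^n\) there is a Riemannian ball \(B_q \subset N^n\) on which both \(\iota_1|_{B_q}\) and \(\iota_2|_{B_q}\) are bi-Lipschitz onto their images; on such a piece, \(\Phi\) is bi-Lipschitz between \(\iota_1(B_q) \subset \R^{\nu_1}\) and \(\iota_2(B_q) \subset \R^{\nu_2}\) and hence, by a McShane or Kirszbraun extension, admits a Lipschitz extension \(\Phi_q : \R^{\nu_1} \to \R^{\nu_2}\) coinciding with \(\iota_2 \circ \iota_1^{-1}\) on \(\iota_1(B_q)\). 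I fix once and for all a countable cover \((B_\alpha)_\alpha\) of \(N^n\) by such balls with associated extensions \(\Phi_\alpha\) and set \(A_\alpha := u^{-1}(B_\alpha)\), so that \((A_\alpha)_\alpha\) covers \(Q^m\) up to a null set.

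To prove~(i), by symmetry I may assume \(\iota_1 \circ u \in W^{1, p}(Q^m; \R^{\nu_1})\). For each \(\alpha\), the composition \(v_\alpha := \Phi_\alpha \circ (\iota_1 \circ u)\) lies in \(W^{1, p}(Q^m; \R^{\nu_2})\) by the standard Lipschitz-into-Sobolev chain rule, and coincides almost everywhere with \(\iota_2 \circ u\) on \(A_\alpha\). Invoking the classical fact that the weak derivative of a Sobolev map into a closed submanifold lies almost everywhere in the corresponding tangent space, together with the infinitesimal isometry of \(d\Phi\) along that tangent space, yields the global pointwise identity
\[
\bigabs{D(\iota_2 \circ u)(x)} = \bigabs{D(\iota_1 \circ u)(x)} \qquad \text{for almost every } x \in Q^m,
\]
so that the weak derivative of \(\iota_2 \circ u\) belongs to \(L^p(Q^m)\). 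Picking one \(\alpha_0\) with \(\abs{A_{\alpha_0}} > 0\), the \(L^p\) control of \(v_{\alpha_0}\) on \(A_{\alpha_0}\) provides a finite average of \(\iota_2 \circ u\) there, and a Poincaré inequality on \(Q^m\) relative to \(A_{\alpha_0}\) upgrades the \(L^p\) bound on the derivative to an \(L^p\) bound on the map itself.

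For~(ii), extract a subsequence along which one has pointwise almost-everywhere convergence \(\iota_1 \circ u_j \to \iota_1 \circ u\) and \(D(\iota_1 \circ u_j) \to D(\iota_1 \circ u)\). The continuity of \(\iota_2 \circ \iota_1^{-1}\) and of its differential along the tangent bundle of \(\iota_1(N^n)\) then transfers these to pointwise convergence \(\iota_2 \circ u_j \to \iota_2 \circ u\) and \(D(\iota_2 \circ u_j) \to D(\iota_2 \circ u)\). The pointwise isometric identity \(\bigabs{D(\iota_2 \circ u_j)} = \bigabs{D(\iota_1 \circ u_j)}\) furnishes an \(L^p\)-dominating sequence for the derivatives, so a dominated convergence argument gives \(L^p\) convergence of the derivatives, and Poincaré on \(Q^m\) recovers the \(L^p\) convergence of the maps themselves.

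The main obstacle is that, because \(N^n\) may be noncompact, the transition \(\Phi\) does not admit a single globally Lipschitz extension to \(\R^{\nu_1}\), and the Lipschitz constants of the local extensions \(\Phi_\alpha\) typically blow up as \(\alpha\) varies. The key compensation is the \emph{intrinsic} isometry of \(\Phi\), which produces the clean global pointwise gradient identity independently of the extrinsic Lipschitz constants. The extrinsic bi-Lipschitz structure of \(\Phi\) is thus required only locally, namely to ensure the tangential decomposition of the weak derivative and the local \(L^p\) integrability of \(\iota_2 \circ u\); globalization is handled exclusively through the intrinsic isometry and the Poincaré inequality on the cube.
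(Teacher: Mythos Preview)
Your approach for part~(i) has a genuine gap: knowing that \(\iota_2 \circ u\) agrees with \(W^{1,p}\) functions \(v_\alpha\) on \emph{measurable} (not open) sets \(A_\alpha\) covering \(Q^m\), and that the patched candidate gradient \(G\) satisfies \(\abs{G} = \abs{D(\iota_1 \circ u)} \in L^p\), does \emph{not} imply that \(\iota_2 \circ u\) is weakly differentiable. The step function \(f = \chi_{(1/2,1)}\) on \((0,1)\) already agrees with the constants \(0\) and \(1\) on a measurable cover and has candidate gradient \(0 \in L^p\), yet \(f \notin W^{1,p}\). Your Poincaré step is circular for the same reason: the Poincaré inequality you invoke presupposes that \(\iota_2 \circ u\) is already in \(W^{1,p}\), which is precisely what you are trying to establish.

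What is missing is exactly the content of the paper's Lemma~\ref{lemma_chain_rule_C1}: one passes to almost every line segment in \(Q^m\), where \(w = \iota_1 \circ u\) is absolutely continuous (hence has compact image in \(\iota_1(N^n)\)), so that \(\Phi = \iota_2 \circ \iota_1^{-1}\) is genuinely Lipschitz on that image and \(\Phi \circ w\) is absolutely continuous on the line. Morrey's ACL characterisation then yields \(\iota_2 \circ u \in W^{1,p}\), and a one-dimensional Poincar\'e--Wirtinger inequality (integrated over the remaining variables) gives the \(L^p\) bound without circularity. Your local Lipschitz extensions \(\Phi_\alpha\) are not needed once you argue this way, and in fact they create a secondary technical issue: since the Kirszbraun extension \(\Phi_\alpha\) is only Lipschitz and \(\iota_1(B_\alpha)\) is a null set in \(\R^{\nu_1}\), the identity \(D\Phi_\alpha(w(x))\vert_{T_{w(x)}\iota_1(N^n)} = d\Phi(w(x))\) is not automatic from Rademacher's theorem.

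Your argument for part~(ii) is essentially the same as the paper's (subsequence, pointwise convergence, generalised dominated convergence via \(\abs{D(\iota_2 \circ u_j)} = \abs{D(\iota_1 \circ u_j)}\), then Poincar\'e), and is fine once part~(i) is in place; just note that the subsequence argument must be combined with the subsequence principle to recover convergence of the full sequence.
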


In the proof of Proposition~\ref{propositionIntrinsicSobolev}, we need a specific version of the chain rule for functions of the form \(\Phi\circ w\), where \(w\) is a Sobolev map and \(\Phi\) is a \(C^1\) map such that \(D\Phi\) is bounded on the range of \(w\), in the spirit of the composition formula in~\cite{Marcus-Mizel}*{Theorem~2.1}:

\begin{lemma}\label{lemma_chain_rule_C1}
Let \(M^{n} \subset \R^{\nu_{1}}\) be an embedded complete Riemannian submanifold and \(\Phi \in C^{1}(M^{n}; \R^{\nu_{2}})\). 
Then, for every \(w \in W^{1, p}(Q^m; M^{n})\) such that \(D\Phi \circ w \in L^{\infty}(Q^{m})\),  we have \(\Phi \circ w \in W^{1,p}(Q^m ; \R^{\nu_{2}})\) and, for almost every \(x \in Q^m\), 
\[{}
D(\Phi\circ w)(x) = D\Phi(w(x))\circ Dw(x).
\] 
\end{lemma}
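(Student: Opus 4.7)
The plan is to reduce to the classical chain rule for compositions of globally Lipschitz maps with Sobolev functions, by extending $\Phi$ to all of $\R^{\nu_{1}}$ and then truncating to bounded Lipschitz approximations, identifying the result with $\Phi \circ w$ on sublevel sets of $w$.

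First I would apply Whitney's extension theorem to produce $\tilde{\Phi} \in C^{1}(\R^{\nu_{1}}; \R^{\nu_{2}})$ agreeing with $\Phi$ on $M^{n}$; this is legitimate because the completeness of $M^{n}$, together with the isometric embedding into $\R^{\nu_{1}}$, forces $M^{n}$ to be closed in $\R^{\nu_{1}}$. For each integer $k \geq 1$ I would then pick a cut-off $\chi_{k} \in C_{c}^{\infty}(\R^{\nu_{1}}; [0, 1])$ with $\chi_{k} \equiv 1$ on $\overline{B_{k}}$ and $\Supp \chi_{k} \subset B_{k+1}$, and set $\Phi_{k} := \chi_{k} \tilde{\Phi}$. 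Since $\Phi_{k}$ is bounded and globally Lipschitz on $\R^{\nu_{1}}$, the classical chain rule (\cite{Marcus-Mizel}*{Theorem~2.1}) gives $\Phi_{k} \circ w \in W^{1, p}(Q^{m}; \R^{\nu_{2}})$ with $D(\Phi_{k} \circ w)(x) = D\Phi_{k}(w(x)) \circ Dw(x)$ a.e. On the sublevel set $A_{k} := \{x \in Q^{m} : \abs{w(x)} < k\}$, $\Phi_{k} \circ w$ agrees pointwise with $\Phi \circ w$; moreover, since $Dw(x)$ takes values in $T_{w(x)} M^{n}$ for a.e.\ $x$ (a standard consequence of $w$ being $M^{n}$-valued), the restriction of $D\Phi_{k}(w(x)) = D\tilde{\Phi}(w(x))$ to this tangent space equals $D\Phi(w(x))$. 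Because $w$ is finite a.e., the sets $A_{k}$ exhaust $Q^{m}$ up to a Lebesgue null set as $k \to \infty$, which yields the pointwise a.e.\ identity
\[
D(\Phi \circ w)(x) = D\Phi(w(x)) \circ Dw(x),
\]
whose absolute value is dominated by $\norm{D\Phi \circ w}_{L^{\infty}} \abs{Dw(x)}$ and therefore belongs to $L^{p}(Q^{m})$.

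The main obstacle I expect is showing that $\Phi \circ w$ itself lies in $L^{p}(Q^{m}; \R^{\nu_{2}})$, since $\Phi$ may grow arbitrarily at infinity in $M^{n}$ and this does not follow from $w \in L^{p}$ alone. For this I would invoke the ACL characterization of Sobolev maps: combining the chain rule on almost every axis-parallel line with the hypothesis $D\Phi \circ w \in L^{\infty}$ shows that, after modification on a null set, $\Phi \circ w$ is absolutely continuous with classical derivative in $L^{p}$ on a.e.\ such line, hence $\Phi \circ w \in W^{1, p}_{\mathrm{loc}}(Q^{m}; \R^{\nu_{2}})$. Choosing $k$ large enough that $E_{k} := \{\abs{w} \leq k\}$ has measure at least $\abs{Q^{m}}/2$, the map $\Phi \circ w$ is bounded on $E_{k}$, so the average $\fint_{E_{k}} \Phi \circ w$ is finite; a weighted Poincar\'e inequality on $Q^{m}$ with reference set $E_{k}$ then gives $\Phi \circ w - \fint_{E_{k}} \Phi \circ w \in L^{p}(Q^{m})$ with norm controlled by $\norm{D\Phi \circ w}_{L^{\infty}} \norm{Dw}_{L^{p}(Q^{m})}$, closing the argument.
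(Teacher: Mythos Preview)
Your proof is correct in substance but takes a more roundabout route than the paper. The paper avoids any Whitney extension: it slices along axis-parallel lines, notes that by the Morrey--Sobolev embedding the restriction of \(w\) to a.e.\ line is continuous with compact image in \(M^n\), so \(\Phi\) is Lipschitz on that image and the one-dimensional chain rule applies directly; a Poincar\'e--Wirtinger argument on lines, integrated over the transverse variables, then gives \(\Phi\circ w\in L^p\), and Morrey's ACL characterization reassembles everything into \(W^{1,p}(Q^m)\). Your Whitney extension and truncation to \(\Phi_k\) is an alternative way to manufacture globally Lipschitz maps for Marcus--Mizel, but you still need the line-by-line boundedness of \(w\) (again via Morrey--Sobolev in one dimension) to pass from \(\Phi_k\circ w\) to \(\Phi\circ w\) on each line, so the extension machinery does not actually save you the slicing step. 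Your weighted Poincar\'e argument for \(L^p\) membership is the paper's device in slightly different packaging.

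Two small caveats. First, your justification that ``completeness together with the isometric embedding forces \(M^n\) to be closed'' is false in general: the spiral \(\theta\mapsto(1/\theta)(\cos\theta,\sin\theta)\), \(\theta\in[1,\infty)\), is a smoothly embedded curve in \(\R^2\), complete for the induced metric, yet accumulates at the origin. In the paper closedness of the image is a standing convention on the embeddings, not a consequence of completeness. Second, at the end of your first paragraph you assert the identity \(D(\Phi\circ w)=D\Phi(w)\circ Dw\) before having established that \(\Phi\circ w\) is weakly differentiable; the formula only becomes meaningful after your ACL argument, so the logical order of your two paragraphs should be reversed.
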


The proof is based on an adaptation of the argument in~\cite{Marcus-Mizel} and relies on Morrey's characterization of Sobolev maps.
To deduce Proposition~\ref{propositionIntrinsicSobolev}, we apply this lemma to the map \(\Phi = \iota_{2}\circ \iota_{1}^{-1}\), which need not be globally Lipschitz-continuous as in usual versions of the composition formula.

\begin{proof}[Proof of Lemma~\ref{lemma_chain_rule_C1}]
The fact that \(\Phi\circ w \in L^{p}(Q^m ; \R^{\nu_{2}})\) is a consequence of the following Poincar\'e--Wirtinger inequality:
\begin{equation}\label{eq_Poincare_Pi}
\int_{Q^m}\int_{Q^m}\abs{\Phi\circ w(x)-\Phi\circ w(y)}^p\dif x \dif y \leq C
\int_{Q^m}\abs{Dw(x)}^p\dif x,
\end{equation}
where \(C > 0\) depends on \(\norm{D\Phi \circ w}_{L^{\infty}(Q^{m})} = \norm{D\Phi}_{L^{\infty}(F)}\), and \(F\) is the essential range of \(w\).{}
We recall that the essential range is the smallest closed subset \(F \subset M^{n}\) such that \(w(x) \in F\) for almost every \(x \in Q^{m}\), see~\cite{Brezis-Nirenberg}*{Section~I.4}.

To prove \eqref{eq_Poincare_Pi}, we observe that, for almost every \(x'\in Q^{m-1}\),  we have \(w(\cdot, x')\in W^{1,p}(Q^1 ; M^{n})\). 
By the Morrey--Sobolev embedding, \(w(\cdot, x')\) can be identified to a continuous (and thus bounded) map on \(Q^1\) with values into \(M^{n}\). 
Hence, we can  apply the classical chain rule to \(\Phi\circ w(\cdot, x')\), because the restriction of \(\Phi\) to \(w(\cdot, x')(Q^1)\) is globally Lipschitz-continuous. 
Hence, \(\Phi\circ w(\cdot, x')\in W^{1,p}(Q^1;\R^{\nu_{2}})\) and 
\begin{equation}
	\label{eqMorreyOneDimensional}
\frac{\mathrm{d}}{\mathrm{d} x_{1}}(\Phi\circ w)(x_{1}, x')
= D\Phi(w(x_1, x'))[\partial_1 w(x_1, x')].
\end{equation}
Since \(D\Phi\) is bounded on the essential range \(F\), this implies
\begin{equation}\label{eq_Sobolev_line}
\resetconstant
\Bigabs{\frac{\mathrm{d}}{\mathrm{d} x_{1}}(\Phi\circ w)(x_{1}, x') }
\leq \Cl{cte-2201} \abs{\partial_1 w(x_1, x')},
\end{equation}
for some constant \(\Cr{cte-2201} > 0\) independent of \(x'\).
The standard one-dimensional Poincaré--Wirtinger inequality applied to \(\Phi\circ w(\cdot, x')\) thus yields
\[
\int_{Q^1} \int_{Q^1} \abs{\Phi\circ w(t,x')-\Phi\circ w(s,x')}^p\dif t \dif s
\leq \Cl{cte-2202} \int_{Q^1}\abs{\partial_1 w(t,x')}^p\dif t.
\]
By integrating over \(x'\in Q^{m-1}\), one gets
\[
\int_{Q^{m-1}}\int_{Q^1} \int_{Q^1} \abs{\Phi\circ w(t,x')-\Phi\circ w(s,x')}^p\dif t \dif s \dif x'
\leq \Cr{cte-2202} \int_{Q^m}\abs{\partial_1 w(t,x')}^p\dif t\dif x'.
\]
The same calculation can be performed for every coordinate. Using the triangle inequality, one has the estimate
\begin{multline*}
\abs{\Phi\circ w(x)-\Phi\circ w(y)}^p \leq \C
\sum_{i=1}^{m}\abs{\Phi\circ w(y_1, \dots, y_{i-1}, x_i, x_{i+1}, \dots, x_m)\\-\Phi\circ w (y_1, \dots, y_{i-1}, y_i, x_{i+1}, \dots, x_m)}^p.
\end{multline*}
By integration, one obtains the Poincaré--Wirtinger inequality~\eqref{eq_Poincare_Pi}. 

We now prove that \(\Phi\circ w \in W^{1,p}(Q^m ; \R^{\nu_{2}})\).  
For almost every \(x'\in Q^{m-1}\), by estimate~\eqref{eq_Sobolev_line} we have that 
\[
\int_{Q^m}\Bigabs{\frac{\mathrm{d}}{\mathrm{d} x_{1}}(\Phi\circ w)(x_{1}, x') }^p\dif x \leq 
(\Cr{cte-2201})^{p} \int_{Q^m} \abs{\partial_1 w(x)}^p\dif x.
\]
Since this is true for every coordinate, Morrey's characterization of Sobolev maps~\cite{Evans-Gariepy}*{Theorem~4.21} implies that \(\Phi\circ w \in W^{1,p}(Q^m ; \R^{\nu_{2}})\) and, for almost every \(x\in Q^m\), it follows from the counterpart of identity~\eqref{eqMorreyOneDimensional} for each coordinate that
\[{}
D(\Phi\circ w)(x)=D\Phi(w(x))\circ Dw(x).
\qedhere
\]
\end{proof}

\begin{proof}[Proof of Proposition~\ref{propositionIntrinsicSobolev}]
Let \(u:Q^m \to N^n\) and let us assume that \(\iota_1\circ u \in W^{1,p}(Q^m;\R^{\nu_1})\). 
The smooth map \(\Phi:=\iota_2\circ \iota_{1}^{-1}\) is defined on the embedded complete submanifold \(\iota_1(N^n) \subset \R^{\nu_{1}}\) with values into \(\iota_2(N^n) \subset \R^{\nu_{2}}\). 
Since \(\iota_2\circ \iota_{1}^{-1}\) is an isometry, \(D\Phi\) is bounded on the tangent bundle \(T(\iota_{1}(N^n))\). 
We can thus apply Lemma~\ref{lemma_chain_rule_C1} to \(w = \iota_1\circ u\). 
This implies that \(\iota_2\circ u \in W^{1,p}(Q^m ; \R^{\nu_2})\) and, for almost every \(x\in Q^m\), we have 
\[{}
D(\iota_2\circ u)(x)=D\Phi((\iota_1\circ u)(x))\circ D(\iota_1\circ u)(x).
\]

Let  \(u_j : Q^m \to N^n\) be a sequence of measurable maps such that
\((\iota_1 \circ u_j)_{j \in \N}\)  converges to \(\iota_1 \circ u\) in \(W^{1, p} (Q^m; \R^{\nu_1})\). 
This implies the convergence in measure of the functions \(\iota_1\circ u_j\) and their derivatives. 
Since \(\Phi\) is \(C^1\), one deduces the same convergence in measure for \(\iota_2\circ u_j=\Phi(\iota_1\circ u_j)\).

For every \(j\geq 1\) and almost every \(x\in Q^m\), the quantity \(\abs{D(\iota_2\circ u_j)(x) - D(\iota_2\circ u)(x)}^p\), which is equal to \(\abs{D(\Phi\circ \iota_1\circ u_j)(x)-D(\Phi\circ \iota_{1}\circ u)(x)}^p\), is dominated by 
\[{}
\resetconstant
\C \big( \abs{D(\Phi\circ \iota_1\circ u_j)(x)}^p  + \abs{D(\Phi\circ \iota_1\circ u)(x)}^p\big).
\]
Lemma~\ref{lemma_chain_rule_C1} and the boundedness of \(D\Phi\) on \(T(\iota_{1}(N^n))\) yield 
\[
\abs{D(\iota_2\circ u_j)(x) - D(\iota_2\circ u)(x)}^p \leq \C \big( \abs{D(\iota_1\circ u_j)(x)}^p + \abs{D(\iota_1\circ u )(x)}^p\big).
\]
Since \((D(\iota_2\circ u_j))_{j \in \N}\) converges to \(D(\iota_2\circ u )\) in measure and the right-hand side converges in \(L^{1}(Q^{m})\), the dominated convergence theorem implies that \(( D(\iota_2\circ u_j))_{j \in \N}\) converges to \(D(\iota_2\circ u)\) in \(L^{p}(Q^m ; \R^{m \times \nu_2})\). 
 
The convergence of \((\iota_2\circ u_j)_{j \in \N}\)  to \(\iota_2\circ u\) in \(L^{p}(Q^m;\R^{\nu_2})\) follows from the convergence in measure and the convergence of the derivatives in \(L^{p}\). 
Indeed, for every \(\varepsilon>0\) we have
\[
\int_{Q^m}\abs{\iota_2\circ u_j-\iota_2\circ u}^p 
\leq (2\varepsilon)^p\abs{Q_m} + \int\limits_{\{\abs{\iota_2\circ u_j-\iota_2\circ u}\geq 2\varepsilon\}} \abs{\iota_2\circ u_j-\iota_2\circ u}^p.
\]
By the convergence in measure, for every \(j \in \N\) large enough we have \(\abs{\{\abs{\iota_2\circ u_j-\iota_2\circ u}\geq \varepsilon\}}\leq \abs{Q^m}/2\).
Defining \(\theta_{\varepsilon} : [0, +\infty) \to \R\) by
\[
\theta_{\varepsilon}(t):=
\begin{cases}
0 & \text{ if \(t \le \varepsilon,\)}\\
2(t-\varepsilon) & \text{ if \(\varepsilon < t < 2\varepsilon,\)}\\
t & \text{ if  \(t \ge 2\varepsilon,\)}
\end{cases}
\]
it follows from the Poincaré inequality for functions vanishing on a set of positive measure that
\begin{align*}
\int\limits_{\{\abs{\iota_2\circ u_j-\iota_2\circ u}\geq 2\varepsilon\}} \abs{\iota_2\circ u_j-\iota_2\circ u}^p 
&\leq \int_{Q^m}\big(\theta_{\varepsilon}(\abs{\iota_2\circ u_j-\iota_2\circ u})\big)^p\\
&\leq \C \int_{Q^m}\abs{D(\iota_2\circ u_j-\iota_2\circ u)}^p.
\end{align*}
By the convergence of \((D(\iota_2\circ u_j))_{j \in \N}\) to \(D(\iota_2\circ u)\) in \(L^{p}(Q^m; \R^{m \times \nu_{2}})\), we then have
\[
\limsup_{j \to \infty}\int_{Q^m}\abs{\iota_2\circ u_j-\iota_2\circ u}^p \leq (2\varepsilon)^p\abs{Q_m}.
\]
Since \(\varepsilon\) is arbitrary, this proves the convergence of \((\iota_2\circ u_j)_{j \in \N}\) to \(\iota_2\circ u\) in \(L^{p}(Q^m; \R^{\nu_{2}})\). The proof is complete.
\end{proof}

\subsection{Bounded maps}%
\label{sectionIntrinsicBoundedMaps}
Let \(N^n\) be an (abstract) complete Riemannian manifold.
We say that a measurable map \(u : Q^m \to N^n\) is \emph{essentially bounded} if it is essentially bounded for the geodesic distance \(\dist_{N^{n}}\) induced by the Riemannian metric on \(N^n\): there exists \(C > 0\)
such that, for almost every \(x, y \in Q^m\),
\(\dist_{N^n} (u (x), u (y))\le C\).
Since \(N^n\) is complete, this is equivalent to the existence of a compact set \(K \subset N^n\) such that, for almost every \(x \in Q^m\), \(u (x) \in K\).

We now  consider an isometric embedding \(\iota : N^n \to \R^\nu\).
If the map \(u\) is essentially bounded, then \(\iota \circ u\in L^{\infty}(Q^m ; \R^\nu)\). 
In general, the converse is not true because there exists \(\nu \in \N_{*}\), depending on the dimension \(n\), such that the manifold \(N^n\) can  be isometrically embedded inside a ball of any radius \(r > 0\) in \(\R^{\nu}\), see \citelist{\cite{Nash-1954}*{Theorem~2}\cite{Nash-1956}*{Theorem~3}}.

We can discard this phenomenon under the additional assumption that the embedding \(\iota : N^n \to \R^\nu\) is a \emph{proper map} from \(N^n\) to \(\R^\nu\); that is,  for every compact set \(L \subset \R^\nu\), the set \(\iota^{-1} (L)\) is a compact subset of \(N^{n}\).
Since \(\iota\) is a homeomorphism from \(N^n\) to \(\iota (N^n)\), this amounts to the property that \(\iota (N^n)\) is a \emph{closed} subset of \(\R^\nu\).
Such proper isometric embeddings of complete Riemaniann manifolds have been constructed by a shrewd application of the classical Nash embedding theorem \cite{Mueller-2009}.

The next proposition summarizes the preceding discussion:

\begin{proposition}%
\label{propositionIntrinsicBounded}
If \(N^n\) is a complete Riemannian manifold, then there exists an isometric embedding \(\iota : N^n \to \R^\nu\) such that \(\iota (N^n)\) is closed.
For such an embedding, the map \(u : Q^m \to N^n\) is essentially bounded if and only if the map \(\iota \circ u : Q^m \to \R^\nu\) is essentially bounded.
\end{proposition}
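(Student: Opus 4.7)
The existence claim is not something I would attempt to reprove from scratch: it is precisely the refined Nash embedding theorem of Müller~\cite{Mueller-2009}, which produces for any abstract complete Riemannian manifold $N^{n}$ an isometric embedding $\iota : N^{n} \to \R^{\nu}$ whose image is closed in the ambient Euclidean space. I would simply invoke this, noting that closedness of $\iota(N^{n})$ is equivalent to properness of $\iota$ because $\iota$ is a homeomorphism from $N^{n}$ onto its image. This gives the first assertion.

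For the equivalence, the plan is to split into two implications, since the forward direction is immediate and the substance sits in the converse. If $u$ is essentially bounded, there is a compact set $K \subset N^{n}$ containing the essential range of $u$. An isometric embedding is $1$-Lipschitz for the chord distance in $\R^{\nu}$, because the straight segment joining $\iota(p)$ and $\iota(q)$ is no longer than the $\iota$-image of any geodesic between $p$ and $q$, whose Euclidean length equals $\dist_{N^{n}}(p,q)$. Hence $\iota(K)$ is bounded in $\R^{\nu}$ and $\iota \circ u \in L^{\infty}(Q^{m}; \R^{\nu})$.

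For the converse I would argue as follows. Assume $\abs{\iota \circ u (x)} \le R$ for almost every $x \in Q^{m}$ and set $K := \iota^{-1}\bigl(\overline{B_{R}(0)}\bigr)$. Because $\iota(N^{n})$ is closed in $\R^{\nu}$, the intersection $\iota(N^{n}) \cap \overline{B_{R}(0)}$ is closed and bounded in $\R^{\nu}$, hence compact; since $\iota$ is a homeomorphism onto its image, $K$ is a compact subset of $N^{n}$. Then $u(x) \in K$ almost everywhere, so $\dist_{N^{n}}(u(x), u(y))$ is bounded by the diameter of $K$, which is exactly the definition of essential boundedness of $u$.

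The only genuinely substantive point — and the reason closedness of $\iota(N^{n})$ cannot be dropped — is flagged in the discussion preceding the statement: by the Nash theorems~\cite{Nash-1954,Nash-1956}, every complete $N^{n}$ also admits isometric embeddings contained in arbitrarily small Euclidean balls, for which $\iota \circ u$ is automatically bounded no matter what $u$ is, so the converse would fail. Thus the real content of the proposition is the interplay between \emph{isometric} and \emph{proper/closed}, both delivered by Müller's theorem; the rest of the argument is routine point-set topology.
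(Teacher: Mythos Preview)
Your argument is correct and matches the paper's own reasoning, which is not given as a separate proof but is exactly the discussion preceding the statement: existence by M\"uller's refinement of Nash, the forward implication from compactness of the essential range, and the converse via properness of \(\iota\) (closedness of \(\iota(N^n)\)) so that \(\iota^{-1}\) of a closed Euclidean ball is compact. Your write-up merely makes a couple of steps more explicit (the \(1\)-Lipschitz inequality for the chord distance, the identification of properness with closedness of the image), but the route is the same.
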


In the sequel, \emph{we work exclusively with proper isometric embeddings \(\iota : N^n \to \R^\nu\), and we systematically identify \(N^n\) with \(\iota(N^n)\)}.{}

\subsection{The nearest point projection}
\label{section_nearest_point_projection}
An important tool for the approximation of Sobolev maps when \(N^n\) is a closed submanifold of \(\R^\nu\) is the fact that \(N^n\) is a  smooth retraction of a neighborhood of itself. 
More precisely,  the \emph{nearest point projection} \(\Pi\) is well-defined and smooth on an open neighborhood \(O\subset \R^\nu\) of \(N^n\). The map \(\Pi\in C^{\infty}(O ; N^n)\)  satisfies the following properties: 
\begin{enumerate}[\((a)\)]
\item for every \((y, z)\in O\times N^n\), \(\dist_{\R^{\nu}}(y, \Pi(y)) \leq \dist_{\R^{\nu}}(y, z)\);
\item in particular, \(\Pi(y)=y\) for every \(y\in N^n\).
\end{enumerate}
It follows that \(D\Pi\) is bounded on \(N^n\). 
For our purposes in this paper, the map \(\Pi\) could be replaced by any  retraction \(\widetilde{\Pi}\in C^1(O ; N^n)\) with bounded derivative. The existence of such a map only requires \(N^n\) to be \(C^1\), as a consequence of \cite{Whitney}*{Theorem~10A}. In contrast, the nearest point projection onto a \(C^1\) submanifold is merely continuous in general.

Reducing the size of \(O\) if necessary, we note that
\begin{enumerate}[(1)]
	\item{}
	\label{item-0639}
	 by continuity of \(D\Pi\) one can assume that \(D\Pi\) is bounded on \(O\), although this does not imply that the map \(\Pi\) is globally Lipschitz-continuous  on \(O\),{}
	\item{}
	\label{item-0640}
	by closedness of \(N^{n}\), the map \(\Pi : O \to N^{n}\) can be extended as a smooth map from \(\R^{\nu}\) to \(\R^{\nu}\), whence the chain rule of Marcus and Mizel~\cite{Marcus-Mizel}*{Theorem~2.1} or Lemma~\ref{lemma_chain_rule_C1} above with \(M^{m} = \R^{\nu}\) can be applied to \(\Pi\circ w\) when \(w\) is a Sobolev map with values into a closed subset \(F \subset O\). 
\end{enumerate}

We assume henceforth that \(O \supset N^{n}\) is chosen so that \emph{the smooth map \(\Pi : O \to N^{n}\) satisfies Properties~\eqref{item-0639} and~\eqref{item-0640} above}.

\section{Approximation of bounded maps by smooth maps}
\label{sectionDensitySmoothMaps}
\subsection{High-integrability case}

When \(p > m\), maps in \(W^{1, p} (Q^m; N^n)\)  are essentially bounded and continuous, and can be approximated uniformly via convolution by smooth maps with values in \(\R^{\nu}\).
The nearest point projection \(\Pi\) defined in Section~\ref{section_nearest_point_projection} allows one to project the sequence back to \(N^{n}\).

\begin{proposition}\label{proposition_high_smooth_bounded_density}
If \(p > m\), then, for every \(u \in W^{1, p}(Q^m; N^n)\), there exists a sequence in \(C^\infty(\overline{Q^m}; N^n)\) converging strongly to \(u\) in \(W^{1, p}(Q^m;\R^\nu)\).
\end{proposition}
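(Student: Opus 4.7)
The plan is to combine the Morrey--Sobolev embedding with convolution and the nearest point projection $\Pi$ from Section~\ref{section_nearest_point_projection}. Since $p > m$, any $u \in W^{1,p}(Q^m; N^n)$ admits a continuous (in fact H\"older) representative on $\overline{Q^m}$; composing with the isometric embedding $\iota$, we can view $u$ as an $\R^\nu$-valued continuous Sobolev map. Because $\overline{Q^m}$ is compact, $K := u(\overline{Q^m})$ is a compact subset of $N^n$, hence
\[
d_{0} := \dist_{\R^\nu}(K, \R^\nu \setminus O) > 0.
\]

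Next, I would extend $u$ to a map $\widetilde{u} \in W^{1,p}(\R^m; \R^\nu)$ via a standard extension operator for the Lipschitz domain $Q^m$; by Morrey's embedding $\widetilde u$ is continuous and uniformly continuous on any compact set. Let $\rho_\varepsilon$ be a standard mollifier and set
\[
u_\varepsilon := \widetilde{u} * \rho_\varepsilon \in C^\infty(\overline{Q^m}; \R^\nu).
\]
Standard properties of mollification give $u_\varepsilon \to u$ uniformly on $\overline{Q^m}$ and $u_\varepsilon \to u$ strongly in $W^{1,p}(Q^m; \R^\nu)$. Uniform convergence combined with the lower bound $d_{0}>0$ implies that, for $\varepsilon$ small enough, $u_\varepsilon(\overline{Q^m})$ is contained in a compact subset of $O$, so that
\[
v_\varepsilon := \Pi \circ u_\varepsilon \in C^\infty(\overline{Q^m}; N^n)
\]
is well defined and smooth.

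It remains to show that $v_\varepsilon \to u$ in $W^{1,p}(Q^m; \R^\nu)$. Since $\Pi$ is the identity on $N^n$ and $u_\varepsilon \to u$ uniformly, continuity of $\Pi$ yields $v_\varepsilon \to u$ uniformly, hence in $L^p$. For the derivatives, the chain rule gives
\[
Dv_\varepsilon(x) = D\Pi(u_\varepsilon(x))\circ Du_\varepsilon(x) \quad \text{a.e.\ in } Q^m,
\]
while applying Lemma~\ref{lemma_chain_rule_C1} to $\Pi \circ u = u$ yields $Du(x) = D\Pi(u(x)) \circ Du(x)$ a.e. Since $D\Pi$ is continuous and bounded on $O$ (property~\eqref{item-0639} of Section~\ref{section_nearest_point_projection}) and $u_\varepsilon \to u$ uniformly with values in a fixed compact subset of $O$, the matrices $D\Pi(u_\varepsilon)$ converge to $D\Pi(u)$ uniformly; coupled with $Du_\varepsilon \to Du$ in $L^p$ and a dominated convergence argument, this gives $Dv_\varepsilon \to Du$ in $L^p(Q^m; \R^{m\times\nu})$.

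The main technical point is ensuring that the mollified maps stay inside the domain of $\Pi$; this is taken care of, once and for all, by the compactness of $u(\overline{Q^m})$ coming from the Morrey--Sobolev embedding, so that the projection step is unconditional for small $\varepsilon$. The other delicate issue, the passage to the limit in $D\Pi(u_\varepsilon)\circ Du_\varepsilon$, relies crucially on the intrinsic chain-rule identity $D\Pi(u)\circ Du = Du$, which is precisely what Lemma~\ref{lemma_chain_rule_C1} provides in the present non-globally-Lipschitz setting.
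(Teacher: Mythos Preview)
Your proof is correct and follows essentially the same approach as the paper: extend \(u\) beyond \(Q^m\), mollify, use the Morrey--Sobolev embedding to get uniform convergence so the mollified maps land in a compact subset of \(O\), and project back with \(\Pi\). The only cosmetic differences are that the paper extends by reflection (which keeps values in \(N^n\)) rather than a general extension operator, and is terser in the final convergence step where you spell out the chain-rule argument explicitly.
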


\begin{proof}
We first extend the map \(u\) by reflection on the larger cube \(Q_{2}^{m}\) with inradius equal to \(2\). 
We still denote by \(u\) the resulting map, which now belongs to \(W^{1, p}(Q_2^m; N^n)\).

Given a family of mollifiers \((\varphi_\varepsilon)_{\varepsilon > 0}\) of the form \(\varphi_\varepsilon(z) =  \varphi({z}/{\varepsilon})/{\varepsilon^m}\) with \(\varphi \in C_{c}^{\infty}(B_{1}^{m})\), 
we have  
\[
  \lim_{\varepsilon \to 0}{\norm{\varphi_\varepsilon \ast u - u}_{W^{1, p} (Q^m)}} = 0,
\]
and thus, by the Morrey--Sobolev embedding,
\[
  \lim_{\varepsilon \to 0}{\norm{\varphi_\varepsilon \ast u - u}_{L^\infty (Q^m)}} = 0.
\]
Since the map \(u\) is essentially bounded on \(Q^m\), there exists a compact set \(K \subset N^n\) such that 
\(u (x) \in K\) for almost every \(x \in Q^m\). 
Take \(\iota_K > 0\) such that \(\overline{K + B_{\iota_K}^\nu} \subset O\),
where \(O\) is the tubular neighborhood on which the nearest point projection \(\Pi\) is defined and smooth, see Section~\ref{section_nearest_point_projection}.
For every \(\varepsilon>0\) sufficiently small, \((\varphi_{\varepsilon}\ast u) (Q^m)\subset K+B_{\iota_K}^\nu\). 
Then, the \(C^1\) regularity of \(\Pi\) implies that \(\Pi \circ (\varphi_{\varepsilon}\ast u)\in W^{1,p}(Q^m ; N^n)\) and such a family of maps converges to \(\Pi \circ u = u\) in \(W^{1,p}(Q^m ; \R^\nu)\). This completes the proof.
\end{proof}

\subsection{Critical-integrability case}

When \(p = m\), maps in \(W^{1, m} (Q^m; N^n)\) need not  be continuous nor even bounded. 
However, smooth maps taking their values into the manifold \(N^{n}\) are always dense in \((W^{1, m} \cap L^\infty)(Q^m; N^n)\), and this essentially follows from the seminal work of Schoen and Uhlenbeck for compact manifolds \cite{Schoen-Uhlenbeck}. 

\begin{proposition}\label{proposition_smooth_bounded_density}
For every \(u \in (W^{1, m} \cap L^\infty)(Q^m; N^n)\), there exists a sequence in \(C^\infty(\overline{Q^m}; N^n)\) converging strongly to \(u\) in \(W^{1, m}(Q^{m}; \R^{\nu})\).
\end{proposition}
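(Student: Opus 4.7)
The plan is to adapt the mollify-and-project strategy of Proposition~\ref{proposition_high_smooth_bounded_density} to the critical regime \(p = m\), where the Morrey--Sobolev embedding no longer provides uniform convergence of convolutions. I would first extend \(u\) by successive reflections across the faces of \(Q^m\) to a map still in \((W^{1, m} \cap L^{\infty})(Q_2^m ; N^n)\), and use the essential boundedness of \(u\) to choose a compact set \(K \subset N^n\) containing its essential range, together with a radius \(\iota_K > 0\) such that \(K + \overline{B_{\iota_K}^\nu} \subset O\), where \(\Pi \in C^{\infty}(O ; N^n)\) is the nearest point projection of Section~\ref{section_nearest_point_projection}.

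The crucial step is then to show that the mollifications \(u_\varepsilon := \varphi_\varepsilon \ast u\), with \(\varphi \in C^{\infty}_c(B_1^m)\), satisfy \(\dist(u_\varepsilon(x), K) \to 0\) as \(\varepsilon \to 0\), uniformly in \(x \in Q^m\). Denoting by \(\bar u_{B_\varepsilon(x)}\) the average of \(u\) on \(B_\varepsilon(x)\), the inclusion \(u(y) \in K\) for almost every \(y\) gives \(\dist(\bar u_{B_\varepsilon(x)}, K) \leq \fint_{B_\varepsilon(x)} \abs{u - \bar u_{B_\varepsilon(x)}}\), while a direct computation based on \(\int \varphi = 1\) yields \(\abs{u_\varepsilon(x) - \bar u_{B_\varepsilon(x)}} \leq C \fint_{B_\varepsilon(x)} \abs{u - \bar u_{B_\varepsilon(x)}}\) for a constant \(C\) depending only on \(\varphi\). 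The Poincar\'e--Wirtinger inequality at the critical exponent then delivers
\[
\fint_{B_\varepsilon(x)} \abs{u - \bar u_{B_\varepsilon(x)}} \leq C' \Bigl( \int_{B_\varepsilon(x)} \abs{Du}^m \Bigr)^{1/m},
\]
whose right-hand side tends to zero uniformly in \(x\) by the absolute continuity of the integral of \(\abs{Du}^m \in L^1(Q_2^m)\). Hence, for \(\varepsilon\) small enough, \(u_\varepsilon(Q^m) \subset K + B_{\iota_K}^\nu \subset O\), and the maps \(u_\varepsilon^* := \Pi \circ u_\varepsilon\) belong to \(C^{\infty}(\overline{Q^m}; N^n)\).

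To conclude I would verify the strong convergence \(u_\varepsilon^* \to u\) in \(W^{1, m}(Q^m ; \R^\nu)\). The \(L^m\) convergence follows from the almost everywhere convergence \(u_\varepsilon \to u\) together with the uniform confinement of \(u_\varepsilon\) in the compact set \(K + \overline{B_{\iota_K}^\nu}\). For the gradients, the chain rule of Lemma~\ref{lemma_chain_rule_C1} gives \(Du_\varepsilon^* = D\Pi(u_\varepsilon) \circ Du_\varepsilon\) and \(Du = D\Pi(u) \circ Du\), and the decomposition
\[
\abs{Du_\varepsilon^* - Du} \leq \abs{D\Pi(u_\varepsilon)} \abs{Du_\varepsilon - Du} + \abs{D\Pi(u_\varepsilon) - D\Pi(u)} \abs{Du}
\]
yields \(L^m\) convergence via the uniform bound on \(\abs{D\Pi}\) in \(O\), the standard convergence \(Du_\varepsilon \to Du\) in \(L^m(Q^m)\), and the dominated convergence theorem applied to the second term. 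The main obstacle is precisely the uniform inclusion \(u_\varepsilon(Q^m) \subset O\): at the critical exponent, no \(L^\infty\)-closeness of \(u_\varepsilon\) to \(u\) is available, so one must rely on the VMO-type control carried by the critical Poincar\'e--Wirtinger inequality, combined with the essential boundedness of \(u\), to land the mollifications in a fixed tubular neighborhood of \(K\) --- the noncompact analogue of the original Schoen--Uhlenbeck argument.
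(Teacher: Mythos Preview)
Your proposal is correct and follows essentially the same approach as the paper: extend by reflection, mollify, use the critical Poincar\'e--Wirtinger inequality together with the absolute continuity of \(\int \abs{Du}^m\) to force \(\varphi_\varepsilon * u\) into a fixed tubular neighborhood of the compact set \(K\), then project via \(\Pi\). The paper estimates \(\dist(\varphi_\varepsilon * u(x), K)\) directly from \(\fint_{B_\varepsilon(x)} \abs{\varphi_\varepsilon * u(x) - u(y)}\,\mathrm{d}y\) without passing through the intermediate average \(\bar u_{B_\varepsilon(x)}\), and it defers the convergence verification to the argument of Proposition~\ref{proposition_high_smooth_bounded_density} rather than spelling out the chain-rule decomposition, but these are cosmetic differences only.
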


Once again, the proof of Proposition~\ref{proposition_smooth_bounded_density} is based on an approximation by a family of  mollifiers. 
In contrast to the case \(p>m\) considered in Proposition~\ref{proposition_high_smooth_bounded_density}, the family \((\varphi_{\varepsilon}\ast u)_{\varepsilon > 0}\) need not converge uniformly to \(u\). 
However, as in the setting of compact manifolds \cite{Schoen-Uhlenbeck}, one can exploit  the vanishing mean oscillation property satisfied by   maps in the critical integrability space, as has been observed in \cite{Brezis-Nirenberg}. 
Indeed, this property guarantees that the approximating sequence takes its values in a small neighborhood of \(N^n\). 
One can then  project it back on \(N^n\) by using the nearest point projection \(\Pi\).

\begin{proof}[Proof of Proposition~\ref{proposition_smooth_bounded_density}]
Let \(u\in (W^{1,m}\cap L^{\infty})(Q^m ; N^n)\). 
As in Proposition~\ref{proposition_high_smooth_bounded_density},  we may assume that \(u \in (W^{1, m} \cap L^\infty)(Q_2^m; N^n)\) and consider the maps \(\varphi_{\varepsilon}\ast u\).
Let \(K \subset N^n\) be a compact subset such that \(u(x) \in K\) for almost every \(x \in Q^m\).
By the Poincar\'e--Wirtinger inequality, for every \(x \in Q^m\) and every \(0 < \varepsilon < 1\),
\[{}
\resetconstant
\big(\dist_{\R^\nu}{((\varphi_\varepsilon * u)(x), K )}\big)^m
\le \fint_{B_\varepsilon^m(x)} \bigabs{\varphi_\varepsilon * u(x) - u(y)}^m \dif y
\le \Cl{cte-1451} \int_{B_\varepsilon^m(x)} \abs{Du}^m,
\]
for some constant \(\Cr{cte-1451} > 0\) independent of \(\varepsilon\).
The quantity in the right-hand side converges uniformly to \(0\) with respect to \(x\) as \(\epsilon\) tends to \(0\).

Taking \(\iota_K > 0\) such that \(\overline{K + B_{\iota_K}^\nu} \subset O\), we deduce from the estimate above that there exists \(\Bar{\varepsilon} > 0\) such that, for every \(0 < \varepsilon \le \Bar{\varepsilon}\) and every \(x \in Q^m\), we have
\[
\dist_{\R^\nu}{((\varphi_\varepsilon * u)(x), K )} \le \iota_K.
\] 
We can then consider the family \(\big(\Pi \circ (\varphi_{\varepsilon} * u)\big)_{0 < \varepsilon \le \Bar{\varepsilon}}\) and conclude as in the proof of Proposition~\ref{proposition_high_smooth_bounded_density}. 
\end{proof}

\subsection{Low-integrability case}

The low integrability case \(p < m\) is the most delicate, but can be settled by the results and methods used to handle the density of smooth maps when the target manifold \(N^n\) is compact.
In general, smooth maps are not dense without an additional topological assumption on \(N^n\), but a larger class of maps admitting \((m - 1 - \floor{p})\)-dimensional singularities is dense.

\begin{proposition}\label{proposition_class_R}
Let \(1\leq p < m\). 
For every \(u \in (W^{1,p}\cap L^{\infty})(Q^m ; N^n)\), there exists a sequence in \((R_{m-\floor{p}-1}\cap L^{\infty})(Q^m ;N^n)\)  converging strongly to \(u\) in \(W^{1, m}(Q^{m}; \R^{\nu})\).{}
If moreover \(\pi_{\floor{p}}(N^n)\) is trivial, then such a sequence can be taken in \(C^{\infty}(\overline{Q^m} ; N^n)\).  
\end{proposition}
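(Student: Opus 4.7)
My plan is to adapt Bethuel's good-cube/bad-cube approximation scheme to the complete (possibly non-compact) manifold \(N^n\), exploiting that the essential range of the bounded map \(u\) is compact. Since \(u \in L^\infty(Q^m; N^n)\), Proposition~\ref{propositionIntrinsicBounded} yields a compact set \(K \subset N^n\) such that \(u(x) \in K\) for almost every \(x\). By compactness, there is some \(\iota_K > 0\) such that \(\overline{K + B^{\nu}_{\iota_K}} \subset O\), where \(O\) is the domain of the nearest point projection \(\Pi\) described in Section~\ref{section_nearest_point_projection}; hence the projection \(\Pi\) is available at a uniform scale throughout the construction, exactly as in Propositions~\ref{proposition_high_smooth_bounded_density} and~\ref{proposition_smooth_bounded_density}.

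First, I would partition \(Q^m\) into cubes \(\sigma^m_\eta\) of inradius \(\eta > 0\) and call a cube \emph{good} when the scale-invariant energy \(\eta^{p-m} \int_{\sigma^m_\eta} |Du|^p\) is small, which by the Poincar\'e--Wirtinger inequality forces a convolution \(\varphi_{\varepsilon(x)} \ast u\) at an appropriate scale \(\varepsilon(x) \sim \eta\) to remain within the uniform tubular neighborhood \(K + B^{\nu}_{\iota_K}\); projecting this convolution by \(\Pi\) produces a smooth approximant with values in \(N^n\). The scale \(\varepsilon(x)\) is chosen via an \emph{adaptive smoothing} that varies smoothly in \(x\) so as to match the size of neighboring cubes and guarantee a smooth transition between good and bad regions.

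On the remaining \emph{bad} cubes, I would apply the opening technique near the \(\floor{p}\)-dimensional skeleton to produce a map that locally depends on only \(\floor{p}\) variables and whose restriction to the skeleton lies in \(W^{1,p}\). A subsequent zero-degree homogenization propagates those skeleton values to each full bad cube. Since \(u\) is bounded, the skeleton values lie in (a perturbation of) \(K\), so the resulting approximant is again bounded; it is smooth away from the dual \((m-\floor{p}-1)\)-skeleton \(T\) and, by the scaling of the homogenization formula, satisfies \(|Du_j(x)| \le C/\dist(x,T)\). Assembling the good-cube and bad-cube pieces and letting \(\eta \to 0\), with the energy estimates carried out separately on good cubes (controlled by \(\|Du\|_{L^p}\) via convolution and Lemma~\ref{lemma_chain_rule_C1}) and on bad cubes (whose total measure tends to \(0\) by a Vitali-type covering bound on the energy concentration set), yields a sequence in \((R_{m-\floor{p}-1} \cap L^\infty)(Q^m; N^n)\) converging to \(u\) in \(W^{1,p}\).

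For the second assertion, when \(\pi_{\floor{p}}(N^n)\) is trivial, any continuous map \(\Sphere^{\floor{p}} \to N^n\) is null-homotopic and, by smoothing, bounds a smooth \((\floor{p}+1)\)-disk in \(N^n\); one can then remove the singularities along \(T\) one cell at a time by the classical Bethuel--Hang--Lin local filling argument, with \(W^{1,p}\) cost controlled by the local energy, producing a sequence in \(C^\infty(\overline{Q^m}; N^n)\) approximating \(u\). The main obstacle will be the quantitative control needed throughout: ensuring that the good-cube convolutions remain within the uniform tubular neighborhood \(K + B^{\nu}_{\iota_K}\) at every scale, and that the bad-cube homogenization values stay in \(K\) (up to a small perturbation), so that everything takes values in \(N^n\). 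This is precisely where the hypothesis \(u \in L^\infty\) is essential and where the complete-manifold case departs from the compact-target case without a uniform tubular neighborhood of \(N^n\) itself.
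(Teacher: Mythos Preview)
Your plan is workable but takes a substantially longer route than the paper. The paper does \emph{not} rerun Bethuel's good/bad cube scheme here; it reduces to the compact-target theorem as a black box. Since \(u\) is essentially bounded, its range lies in a compact set \(K\) contained in some compact submanifold-with-boundary \(N^n_0 \subset N^n\). The paper proves a doubling lemma (Lemma~\ref{lemmaNonCompactToCompact}): there is a closed compact manifold \(L^n \subset \R^{\nu+1}\) containing \(K \times \{0\}\) together with a smooth map \(P : L^n \to N^n_0\) that is the identity on \(K\). One then regards \(u\) as a map into \(L^n\), invokes the known density of \(R_{m-\floor{p}-1}(Q^m; L^n)\) in \(W^{1,p}(Q^m; L^n)\) for compact targets \cite{Bousquet-Ponce-VanSchaftingen}*{Theorem~2}, and post-composes the approximants with \(P\) to land back in \(N^n_0 \subset N^n\); boundedness is automatic since \(P(L^n) \subset N^n_0\). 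This is a few lines rather than a full reconstruction of the cube machinery.

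For the second assertion the paper explicitly notes that the doubling trick fails: \(L^n\) need not inherit the triviality of \(\pi_{\floor{p}}(N^n)\) (doubling a ball gives a sphere). So there the paper does exactly what you propose---start from the \(R\)-class approximants in \(N^n\) obtained in the first part and remove the singularities via the \(\pi_{\floor{p}}\)-filling argument, observing that this step only needs a retraction from a tubular neighborhood of a \emph{compact} subset of \(N^n\), not compactness of \(N^n\). Your approach of rerunning the full good/bad-cube argument inside \(N^n\) using the uniform tubular neighborhood of \(K\) would also succeed, but it essentially reproves the compact-target theorem instead of citing it; the only gain is self-containment. One caution if you pursue it: your sketch glosses over why the bad-cube output is \emph{smooth} away from \(T\) (not just \(W^{1,p}\))---you need to smooth the skeleton values before homogenizing, and match them with the good-cube convolution on common faces, which is where most of the technical work in Bethuel/Hang--Lin actually sits.
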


In the statement above we denote by \(R_i(Q^m ; N^n)\), for \(i \in \{0,\dotsc, m-1\}\), the set of maps \(u : \overline{Q^m} \to N^n\) which are smooth on \(\overline{Q^{m}}\setminus T\) and such that, for every \(x\in \overline{Q^m}\setminus T\),
\[
\abs{D u (x)}\leq \frac{C}{\dist (x,T)},
\]
where \(T\) is a finite union of \(i\)-dimensional hyperplanes.
Here, the set \(T\) and the constant \(C > 0\) depend on \(u\).

The proof of Proposition~\ref{proposition_class_R}  relies on the next lemma which allows one to identify a bounded map into a \emph{complete} manifold as a map into a \emph{compact} manifold.

\begin{lemma}\label{lemmaNonCompactToCompact}
Let \(N^n_0\) be a smooth compact submanifold of\/ \(\R^\nu\) with boundary. 
Then, for every compact set \(K\) in the relative interior of \(N^n_0\), there exists a smooth compact submanifold \(L^n\) without boundary of\/ \(\R^{\nu} \times \R\) such that 
\[{}
K \times \{0\} \subset L^n
\quad \text{and} \quad 
P(L^n) \subset N^n_0,
\]
where \(P : \R^{\nu} \times \R \to \R^{\nu}\) denotes the projection \(P(x, s) = x\).
\end{lemma}

The idea of the proof of Lemma~\ref{lemmaNonCompactToCompact} is to glue together two identical copies of \(N^n_0\) along the boundary. 
To avoid the creation of singularities,  the gluing is done along a tube diffeomorphic to \(\partial N_0 \times [0, 1]\).{}
To avoid the intersection between the two copies, they are placed in distinct \(\nu\)-dimensional affine hyperplanes of the space \(\R^{\nu+1}\).

\begin{proof}[Proof of Lemma~\ref{lemmaNonCompactToCompact}]
Let \(K\) be a compact subset in the interior of \(N^n_0\). By the collar neighborhood theorem (see for example~\cite{Kosinski}*{Theorem 1.7.3}), there exist a relative open neighborhood \(U\) of \(\partial N^{n}_0\) in \( N^n_0\) and a smooth diffeomorphism
\[
f : \partial N^n_0 \times [0,1] \to N^n_0 \cap \overline{U}
\]
such that \(f^{-1}(\partial N^n_0) = \partial N^n_0 \times \{1\}\) and \(f^{-1}(\partial U \cap N_{0}^{n}) = \partial N^n_0 \times \{0\}\). 
By reducing the size of \(U\) if necessary, we can assume that \(U\cap K=\emptyset\).

Let \(\alpha, \beta : [0,1] \to [0,1]\) be two smooth functions such that 
\[
\alpha(t)= 
\begin{cases}
t & \text{if } t<1/4,\\
1-t & \text{if } t>3/4,
\end{cases}
\quad 
\text{and}
\quad
\beta(t)=
\begin{cases}
0 & \text{if } t<1/8,\\
1 & \text{if } t>7/8.
\end{cases}
\]
We also require that \(\beta\) be nondecreasing and \(\beta'>0\) on the interval \([1/4, 3/4]\). We now define the set
\[
L^n=\big((N^n_0\setminus U)\times \{0,1\}\big)\cup \bigl\{\bigl(f(z, \alpha(t)), \beta(t)\bigr) \st z \in \partial N^n_0,\ t\in (0,1)\bigr\}.
\]
We observe  that
\begin{multline*}
(N^n_0\times \{0\}) \cap \bigl\{\bigl(f(z, \alpha(t)\bigr), \beta(t)\bigr) \st z \in \partial N^n_0,\ t\in (0,1)\bigr\} \\ 
 = \bigl\{(f(z, t),0) \st z\in \partial N^n_0,\ t \in (0,t_0]\bigr\}, 
\end{multline*}
where \(t_0=\max{\{t \st \beta(t)=0\}}\).
Similarly, 
\begin{multline*}
(N^n_0\times \{1\}) \cap \bigl\{\bigl(f(z, \alpha(t)\bigr), \beta(t)) \st z \in \partial N^n_0,\ t\in (0,1)\bigr\} \\ 
= \bigl\{ \bigl(f(z, 1-t), 1\bigr) \st z \in \partial N^n_0,\ t\in {[t_1,1)}\bigr\},
\end{multline*}
where  \(t_1 = \min{\{t \st \beta(t)=1\}}\). 
This implies that \(L^n\) is a smooth submanifold of \(\R^{\nu+1}\). By construction, \(L^n\) is compact and has no boundary. Moreover, \(K\times \{0\}\) is contained in \(L^n\). 
The inclusion \(P(L^{n}) \subset N_{0}^{n}\) follows from the fact that \(L^{n} \subset N^{n}_0 \times [0, 1]\).
\end{proof}

In the proof of Proposition~\ref{proposition_class_R} that we present below, we first observe that the range of a map \(u \in (W^{1,p}\cap L^{\infty})(Q^m ; N^n)\) is contained in a compact set. Hence, the map \(u\) can be identified to an element of \(W^{1,p}(Q^m ; L^n)\), where \(L^{n}\) is the compact manifold given by Lemma~\ref{lemmaNonCompactToCompact}. 
For a compact target manifold, the density of the class \(R_{m-\floor{p}-1}(Q^m ; L^n)\) in \(W^{1,p}(Q^m ; L^n)\) has been proved in \citelist{\cite{Bethuel}\cite{Hang-Lin}\cite{Bousquet-Ponce-VanSchaftingen}}. 
The retraction \(P\) from Lemma~\ref{lemmaNonCompactToCompact} then allows one to bring an approximating sequence back to the original manifold \(N^n\). 

This approach cannot work for the density of \(C^{\infty}(\overline{Q^m} ; N^n)\) in  the space \((W^{1,p}\cap L^{\infty})(Q^m ; N^n)\). Indeed, there is no guarantee that the manifold \(L^n\) inherits the topological assumption satisfied by \(N^n\). 
For example, by gluing two balls \(\mathbb{B}^{n}\) one gets a manifold which is diffeomorphic to the sphere \(\Sphere^{n}\), while the homotopy group \(\pi_{n}(\Sphere^{n})\) is nontrivial.
Instead, once the density of the class \(R_{m-\floor{p}-1}(Q^m ; N^n)\) is proved, one can proceed along the lines of the proof in the compact setting~\citelist{\cite{Bethuel}\cite{Hang-Lin}\cite{Bousquet-Ponce-VanSchaftingen}}.

\begin{proof}[Proof of Proposition~\ref{proposition_class_R}]
Given \(u\in (W^{1,p}\cap L^{\infty})(Q^m ; N^n)\), the essential range of \(u\) is contained in a compact subset \(K\) of a  smooth compact submanifold  \(N^n_0\subset N^n\) with boundary. 
Let \(L^n\) be a compact smooth submanifold of \(\R^{\nu+1}\) satisfying the properties of Lemma~\ref{lemmaNonCompactToCompact}.  
Then \(u\) belongs to \(W^{1,p}(Q^m ; L^n)\). By \cite{Bousquet-Ponce-VanSchaftingen}*{Theorem 2}, there exists a sequence of maps \((u_j)_{j\in \N}\) in \(R_{m-\floor{p}-1}(Q^m ; L^n)\) which converges to \(u\) in \(W^{1,p}(Q^m ; \R^{\nu + 1})\). 
This implies that the sequence \((P \circ u_j)_{j\in \N}\) in  \(R_{m-\floor{p}-1}(Q^m ; N^n)\) still converges to \(P \circ u=u\) in \(W^{1,p}(Q^m ; \R^{\nu})\). 
Since \(P(L^n)\subset N^n_0\), the sequence \((P \circ u_j)_{j\in \N}\) is also contained in the space \(L^{\infty}(Q^m ; N^n)\). 
This completes the proof of the first part of the proposition.

If we further assume that \(\pi_{\floor{p}}(N^n)\) is trivial, then we can approximate each map \(P(u_j)\in R_{m-\floor{p}-1}(Q^m ; N^n)\)  by a sequence of smooth maps in \(C^{\infty}(\overline{Q^m} ; N^n)\). 
As in the case of compact-target manifolds, one relies here on the existence of a smooth projection from a tubular neighborhood of a compact subset of \(N^{n}\) into \(N^{n}\), see Section~7 and the Claim in Section~9 of~\cite{Bousquet-Ponce-VanSchaftingen}. 
By a diagonal argument, this implies that \(u\) itself belongs to the closure of \(C^{\infty}(\overline{Q^m} ; N^n)\).
\end{proof}

\section{Lack of strong density in $W^{1, n}(Q^m ; N^n)$}%
\label{sectionCounterexample}

In this section we give an example of a complete manifold for which \((W^{1, n} \cap L^\infty)(Q^m; N^n)\) is not strongly dense in \(W^{1, n}(Q^m; N^n)\) with \(n \le m\). 
We state the main result of this section as follows:

\begin{proposition}
\label{propCounterExample}
Let \(\nu \in \N_{*}\), \(m, n \in \N_{*}\) be such that \(m \ge n \ge 2\), and let \(a \in \Sphere^{n}\).
For every smooth embedding \(F : \Sphere^{n}\setminus \{a\} \to \R^{\nu}\) such that \(\lim\limits_{x \to a}{\abs{F(x)}} = +\infty\) and \(DF \in L^{n}(\Sphere^{n} \setminus \{a\})\), the closed submanifold \(N^{n} = F(\Sphere^{n} \setminus \{a\})\) equipped with the Riemannian metric inherited from \(\R^{\nu}\) is complete, but \((W^{1, n} \cap L^{\infty})(Q^m ; N^n)\) is \emph{not} strongly dense in \(W^{1, n}(Q^{m}; N^{n})\).
\end{proposition}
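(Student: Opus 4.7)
I plan to prove the result in two parts. Completeness of $N^n$ is immediate: the hypothesis $|F(x)|\to\infty$ as $x\to a$ ensures that $N^n=F(\Sphere^n\setminus\{a\})$ is closed in $\R^\nu$, and since the induced Riemannian metric dominates the ambient Euclidean one, any Cauchy sequence in $N^n$ is Cauchy in $\R^\nu$ with limit in $N^n$; local bi-Lipschitz equivalence of the two metrics on $N^n$ promotes this to Riemannian convergence.

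The substantive part is the non-density. The idea is to exhibit a map that "wraps around the hole at infinity'' of $N^n$. Choose $w\in C^\infty(\overline{Q^n};\Sphere^n)$ of topological degree $1$ with $w|_{\partial Q^n}\equiv -a$, so that $w^{-1}(a)$ consists of finitely many interior regular preimages, and define $u:Q^m\to N^n$ by $u(x',x''):=F(w(x'))$ for $x'\in Q^n$, $x''\in Q^{m-n}$. That $u\in W^{1,n}(Q^m;N^n)$ follows from the chain-rule bound $|Du|^n\le C|DF(w)|^n|Dw|^n$: since $Dw$ is bounded and $w$ is a local diffeomorphism near $w^{-1}(a)$, a change of variables combined with the control of $DF$ away from $a$ reduces the total energy to a constant multiple of $\int_{\Sphere^n}|DF|^n<\infty$.

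Suppose toward contradiction that $u_k\to u$ in $W^{1,n}$ with $u_k\in(W^{1,n}\cap L^\infty)(Q^m;N^n)$. By Proposition~\ref{proposition_smooth_bounded_density}, we may take $u_k\in C^\infty(\overline{Q^m};N^n)$, and by Fubini we restrict to a generic $n$-dimensional slice to reduce to $m=n$. Consider the inverse diffeomorphism $\Psi:=F^{-1}:N^n\to\Sphere^n\setminus\{a\}$, a smooth $n$-form $\sigma$ on $\Sphere^n$ with $\int\sigma=1$ supported in a ball $B_\delta(a)\not\ni -a$, and a smooth primitive $\tau$ of $\sigma$ on $\Sphere^n\setminus\{a\}$, which exists because $H^n(\Sphere^n\setminus\{a\})=0$. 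Set $\omega:=\Psi^*\sigma$ and $\eta:=\Psi^*\tau$, so $d\eta=\omega$ on $N^n$. Computing on each side: $u^*\omega=(\Psi\circ F\circ w)^*\sigma=w^*\sigma$, hence $\int_{Q^n}u^*\omega=\deg(w)=1$; conversely, each $u_k$ has image in a compact subset of $N^n$ on which $\omega$ and $\eta$ are bounded, so Stokes' theorem gives $\int_{Q^n}u_k^*\omega=\int_{\partial Q^n}u_k^*\eta\to 0$, since the trace of $u_k$ converges in $W^{1-1/n,n}$ to $u|_{\partial Q^n}\equiv F(-a)$ and $\eta$ is bounded near $F(-a)$.

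The main obstacle is to reconcile these two computations via the continuity statement $\int u_k^*\omega\to\int u^*\omega$, which is nontrivial since $\omega=\Psi^*\sigma$ is unbounded on $N^n$ near infinity. I plan to handle this by rewriting $\int v^*\omega=\int(\Psi\circ v)^*\sigma$ with $\sigma$ smooth and bounded; the pointwise a.e.\ convergence $\Psi\circ u_k\to \Psi\circ u=w$ follows (along a subsequence) from $u_k\to u$ a.e.\ and the continuity of $\Psi$ on $N^n$. To pass to the limit under the integral, I would split $Q^n$ into a "good set'' where $u$ stays in a fixed geodesic ball of $N^n$ (on which $D\Psi\circ u_k$ is eventually uniformly bounded and standard chain-rule estimates give $W^{1,n}$ convergence of $\Psi\circ u_k$), and a complementary "bad set'' whose measure is controlled by the strong $L^n$ convergence of $u_k\to u$ together with the fact that $u$ is a.e.\ finite in $N^n$; equi-integrability of $|Du_k|^n$ then bounds the contribution from the bad set. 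This yields $\int u_k^*\omega\to 1$, contradicting $\int u_k^*\omega\to 0$, and concludes the proof.
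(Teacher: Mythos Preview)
Your differential-forms route is appealing, but as written it has two gaps, one minor and one substantive.

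The minor one is the boundary term: convergence of traces in \(W^{1-1/n,n}(\partial Q^{n})\) is critical on the \((n-1)\)-dimensional boundary, so it controls neither \(L^{\infty}\) nor tangential derivatives; hence neither the boundedness of \(\eta(u_{k})\) on \(\partial Q^{n}\) nor \(\int_{\partial Q^{n}}\abs{D_{\mathrm{tan}}u_{k}}^{n-1}\to 0\) follows. This is repaired by the standard Fubini slicing: pass to a subsequence and choose \(r\in(r_{0},1)\) with \(u_{k}\vert_{\partial Q_{r}^{n}}\to u\vert_{\partial Q_{r}^{n}}=F(-a)\) in \(W^{1,n}(\partial Q_{r}^{n})\), hence uniformly by Morrey; then \(\int_{\partial Q_{r}^{n}}u_{k}^{*}\eta\to 0\).

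The substantive gap is your bad-set estimate in the continuity step. You rewrite \(\int u_{k}^{*}\omega=\int(\Psi\circ u_{k})^{*}\sigma\) and invoke equi-integrability of \(\abs{Du_{k}}^{n}\), but the pointwise bound is \(\abs{(\Psi\circ u_{k})^{*}\sigma}\le C\abs{D\Psi(u_{k})}^{n}\abs{Du_{k}}^{n}\), and the hypotheses on \(F\) do \emph{not} force \(\abs{D\Psi}=\abs{(DF)^{-1}}\) to be bounded. For instance, near \(a\) a surface of revolution \(F(r,\theta)=(r^{2}\cos\theta,\,r^{2}\sin\theta,\,(\log\tfrac{1}{r})^{1/4})\) satisfies \(\abs{F}\to\infty\) and \(DF\in L^{2}\), yet \(\sigma_{\min}(DF)\sim r\to 0\). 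The source of the trouble is your choice of \(\sigma\) supported \emph{near} the puncture \(a\): this forces \(\omega=\Psi^{*}\sigma\) to live near infinity in \(N^{n}\), where it is unbounded. If instead you take \(\sigma\) supported in a ball around \(-a\) (the degree identity \(\int_{Q^{n}}w^{*}\sigma=\deg w=1\) is insensitive to where \(\sigma\) sits), then \(\omega\) is supported in the compact set \(F(\operatorname{supp}\sigma)\subset N^{n}\), so \(\abs{u_{k}^{*}\omega}\le \norm{\omega}_{\infty}\abs{Du_{k}}^{n}\) and your equi-integrability argument goes through verbatim.

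By way of comparison, the paper sidesteps both issues by never composing with \(\Psi\). It takes \(u=F\circ f\) with \(f\) a diffeomorphism from \(\overline{Q^{n}}\) onto a closed neighbourhood of \(a\) (so \(u\) has a single blow-up point at \(0=f^{-1}(a)\)), fixes a compact \(K\subset N^{n}\) with \(\mathcal{H}^{n}(K)>0\), and after the same Fubini/Morrey slicing uses a Brouwer-degree argument on \(\partial Q_{r}^{n}\) to force \(K\subset u_{k_{j}}(Q_{r}^{n})\). The contradiction then comes from the area formula applied to \(u_{k_{j}}:Q_{r}^{n}\to N^{n}\) itself:
\[
\mathcal{H}^{n}(K)\le C\int_{Q_{r}^{n}}\abs{Du_{k_{j}}}^{n}\longrightarrow C\int_{Q_{r}^{n}}\abs{Du}^{n}\xrightarrow[r\to 0]{}0.
\]
Because the area is measured in \(N^{n}\) and not in \(\Sphere^{n}\), only \(\abs{Du_{k}}^{n}\) appears and the behaviour of \(D\Psi\) is irrelevant.
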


For instance, one may take \(F(x) = \lambda(x) x\), where \(\lambda : \Sphere^{n} \setminus \{a\} \to \R\) is a positive smooth function such that \(\lambda \in W^{1, n}(\Sphere^{n}; \R)\) and \(\lim\limits_{x\to a}\lambda(x)=+\infty\).
This is always possible in dimension \(n \ge 2\), and an example is given by setting
\[
\lambda(x) =  \Big( \log{\frac{1}{\dist_{\Sphere^{n}}(x, a)}} \Big)^{\alpha}
\]
for \(x\) in a neighborhood of \(a\) and any exponent \(0 < \alpha < \frac{n-1}{n}\).

Proposition~\ref{propCounterExample} follows from the fact that \(F\) cannot be approximated in \(W^{1,n}(\Sphere^n ; \R^{n+1})\) by a sequence in \(C^{\infty}(\Sphere^n ; N^n)\). 
In turn, the latter is proved by contradiction: the ranges of  such approximating maps when restricted to a small neighborhood \(V\) of \(a\) in \(\Sphere^n\) would contain a fixed compact subset \(K\) of \(N^n\). 
By the area formula, the possibility of taking \(V\) arbitrarily small contradicts the equi-integrability of the sequence in \(W^{1,n }\).

\begin{proof}[Proof of Proposition~\ref{propCounterExample}]
Given a diffeomorphism \(f : \overline{Q^{n}} \to \Sphere^{n}\) between \(\overline{Q^{n}}\) and a closed neighborhood of \(a\) in \(\Sphere^{n}\), the function 
\begin{equation}
\label{eq-FunctionCounterExample}
u = F \circ f
\end{equation}
belongs to \(W^{1, n} (Q^n; N^{n})\).{}
We first handle the case \(m = n\) by proving that \(u\) cannot be approximated by bounded maps in \(W^{1, n}(Q^{n}; N^{n})\).
We proceed by contradiction:
if \(u\) is in the closure of the set \((W^{1, n} \cap L^{\infty})(Q^n ; N^n)\) then, by density of the set \(C^{\infty}(\overline{Q^{n}}; N^{n})\) in the former space (Proposition~\ref{proposition_smooth_bounded_density}), there exists a sequence of maps \((u_k)_{k \in \N}\) in \(C^\infty (\overline{Q^n}; N^{n})\) converging strongly to \(u\) in \(W^{1, n} (Q^n; \R^{\nu})\).

Without loss of generality, we may assume that \(f(0) = a\).
Given a compact subset \(K \subset N^{n}\) with \(\mathcal{H}^n(K)>0\), since the embedding \(F\) diverges at the point \(a\), there exists  \(0 < \delta < 1\) such that 
\[{}
K \cap u(\overline{Q_{\delta}^{n}}) = \emptyset.{}
\]
By a Fubini-type argument, there exists a subsequence \((u_{k_{j}})_{j \in \N}\)
such that, for almost every \(r \in (0, 1)\), \((u_{k_{j}}|_{\partial Q_{r}^{n}})_{j \in \N}\) converges to \(u|_{\partial Q_{r}^{n}}\) in \(W^{1, n}({\partial Q_{r}^{n}}; \R^{\nu})\), whence also uniformly by the Morrey--Sobolev inequality.
Since for each such \(r \le \delta\) we have \(K \cap u(\partial{Q_{r}^{n}}) = \emptyset\), by uniform convergence of 
\((u_{k_{j}}|_{\partial Q_{r}^{n}})_{j \in \N}\) there exists \(J_{r} \in \N\) such that, for every \(j \ge J_{r}\), 
\[{}
\|u_{k_{j}}-u\|_{L^{\infty}(\partial Q_{r}^{n})} <\dist (K, u(\partial Q_{r}^{n})).
\] 
In particular, \(K \cap u_{k_{j}}(\partial Q_{r}^{n}) = \emptyset\).

We claim that 
\begin{equation}\label{eq491}
K \subset u_{k_j}(Q^{n}_r).
\end{equation}
To prove this, we take a homeomorphism \(g : \overline{Q^{n}_r} \to \Sphere^{n}\setminus f(Q^{n}_r)\) such that \(g|_{\partial Q^{n}_r} = f|_{\partial Q^{n}_r}\). 
Since \(K\cap \big(F\circ f(\overline{Q^n_{\delta}})\big)=\emptyset\), this implies that 
\[
F\circ g(Q^{n}_r) = F(\Sphere^n\setminus f(\overline{Q^{n}_r})) \supset K.
\]
By continuity of the Brouwer degree with respect to the uniform convergence, for every \(y\in K\) and for every \(j\geq J_r\) we have
\[
\deg{(u_{k_j}, Q^{n}_r, y)} = \deg{(F\circ g , Q^{n}_r, y) \ne 0}.
\] 
This implies Claim~\eqref{eq491}.

By monotonicity of the Hausdorff measure and by the area formula, we then have
\[
\mathcal{H}^n (K) 
\le 
\mathcal{H}^n \bigl(u_{k_{j}}(Q^n_r)\bigr)
\le
\int_{Q^{n}_{r}} {\jac u_{k_{j}}} ,
\]
where \(\jac u_{k_j}=\bigl(\det{((Du_{k_j})^*\circ Du_{k_j})}\bigr)^{1/2}\).
Using the pointwise inequality 
\({}
\resetconstant
{\jac  u_{k_{j}}} \le \Cl{cte-1452} \abs{D u_{k_{j}}}^n\), as \(j\) tends to infinity we get
\[
\mathcal{H}^n (K) 
\le 
\Cr{cte-1452} \int_{Q^{n}_{r}} \abs{D u}^n.
\]
Since the right-hand side tends to zero as \(r\) tends to zero, we have a contradiction.
Hence the density of bounded maps fails in the space \(W^{1, n}(Q^{n}; N^{n})\), and there exists \(\varepsilon > 0\) such that, for every \(w \in (W^{1, n} \cap L^{\infty})(Q^{n}; N^{n})\),{}
\begin{equation}
	\label{eqDensityLowerBound}
	\int_{Q^{n}} \abs{Dw - Du}^{n} \ge \varepsilon.
\end{equation}

When \(m > n\), we consider the map \(u \circ P\), where \(u\) is defined by \eqref{eq-FunctionCounterExample} and \(P : Q^m = Q^n \times Q^{m - n} \to Q^n\) is the projection on the first component. 
Given \(v \in (W^{1, n} \cap L^\infty) (Q^m; N^n)\), it follows for almost every \(y'' \in Q^{m - n}\) that \(v(\cdot, y'') \in (W^{1, n} \cap L^\infty) (Q^n; N^n)\).{}
By Fubini's theorem and the lower bound \eqref{eqDensityLowerBound} applied to \(w = v(\cdot, y'')\), we get
\[
\int_{Q^{m}} \abs{D v - D (u\circ P)}^n
=  \int_{Q^{m - n}} \biggl(\; \int_{Q^n} \abs{D v (y', y'') - D u (y')}^n \dif y' \biggr)\dif y''
\ge 2^{m-n} \varepsilon.  
\]
Hence, \(u \circ P\) does not belong to the closure of \((W^{1,n}\cap L^{\infty})(Q^m ; N^n)\).
\end{proof}

An alternative example, this time of an \textit{algebraic} complete manifold \(N^{n}\) for which \((W^{1, n} \cap L^{\infty})(Q^{n} ; N^n)\) is not strongly dense in \(W^{1, n}(Q^{n}; N^{n})\), is
\[
N^{n} = \Bigl\{ y = (y_{1}, y') \in \R^+ \times \R^{n} \st \abs{y'}^2 = \frac{y_{1}}{(1 + y_{1})^{2\beta - 1}}\Bigr\},
\]
where \(\beta > \frac{n}{n - 1}\), see Figure~\ref{figureAlgebraic}.
\begin{figure}
\begin{center}
\begin{tikzpicture}
\begin{scope}[shift={(0,0)},rotate=-90]
\draw [domain=0:2, scale=5, samples=200]
plot ({sqrt(\x/((1+\x)^6))} ,\x);
\draw [domain=0:2, scale=5, samples=200]
plot ({-sqrt(\x/((1+\x)^6))} ,\x);
\draw[dashed][scale=5] 
(-0.125,1) arc (-135 : -45 : {(0.25/sqrt(2))}) ;
\draw[dashed][scale=5] 
(0.125,1) arc (45 : 135 : {(0.25/sqrt(2))}) ;
\end{scope}
\end{tikzpicture}
\end{center}
\caption{The algebraic manifold $N^n$}
\label{figureAlgebraic}
\end{figure}
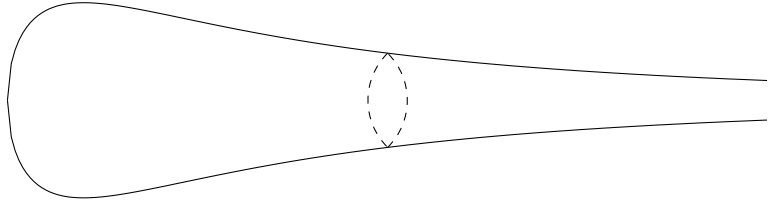
The lack of density can be obtained using the map \(u : Q^{n} \to N^{n}\) defined for \(x \in Q^{n} \setminus \{0\}\) by
\begin{equation}
\label{eqDefBadFunctionStrong}
  u (x) = \biggl(\varphi (\abs{x}), \frac{\sqrt{\varphi (\abs{x})} }{(1 + \varphi (\abs{x}))^{\beta - \frac{1}{2}} } \frac{x}{\abs{x}}\biggr),
\end{equation}
where \(\varphi: (0, \infty) \to \R_{+}\) is a smooth function such that \(\varphi (r) = \abs{\log r}^\gamma\) for  \(r \in (0, 1/3)\), \(\varphi (r) = 0\) for  \(r \in (2/3, \infty)\)  and \(\varphi'/\sqrt{\varphi}\) is bounded on \((1/3, 2/3)\). 
Then, \(u\) belongs to \(W^{1, n}(Q^{n}; N^{n})\) provided that 
\(
  \frac{1}{n(\beta-1)} < \gamma < \frac{n - 1}{n}.
\)
An adaptation of the proof of Proposition~\ref{propCounterExample}  shows that there exists no sequence of maps in \(C^{\infty}(\overline{Q^{n}}; N^{n})\) converging strongly to \(u\) in \(W^{1, n}(Q^{n}; N^{n})\).

Haj\l asz and Schikorra~\cite{Hajlasz-Schikorra}*{Section~3} have provided examples of noncompact manifolds \(N^{n}\) for which Lipschitz maps are not strongly dense in \(W^{1, n} (Q^n; N^{n})\). 
Instead of taking an embedding \(F\) that blows up at some point \(a\) as we do, they construct an embedding that is not proper but strongly oscillates in a neighborhood of the point \(a\).


\section{Main tools}
\label{sectionMainTools}

In this section we explain the main tools used in the proofs of Theorems~\ref{theoremMainNonInteger} and~\ref{theorem_Ap_CNS}.

\subsection{The opening technique}%
\label{sectionOpening}

We recall the technique of opening of maps that has been introduced by Brezis and Li~\cite{Brezis-Li} and pursued in \cite{Bousquet-Ponce-VanSchaftingen}*{Section~2}.  
To illustrate the main idea, we explain this tool in a model situation.
Given a map \(u \in W^{1, p}(\R^{m}; \R^{\nu})\), we wish to construct a smooth map \(\Phi : \R^{m} \to \R^{m}\) such that
\begin{enumerate}[\((a)\)]
	\item \(u \circ \Phi\) is constant in \(Q_{1}^{m}\);{}
	\item \(u \circ \Phi = u\) in \(\R^{m} \setminus Q^{m}_{4}\);{}
	\item \(u \circ \Phi \in W^{1, p}(\R^{m}; \R^{\nu})\) and \(\norm{u \circ \Phi}_{W^{1, p}(\R^{m})} \le C \norm{u}_{W^{1, p}(\R^{m})}\).
\end{enumerate}

The opening construction is based on the following elementary inequality~\cite{Bousquet-Ponce-VanSchaftingen}*{Lemma 2.5}:
\[{}
\fint_{Q_{1}^{m}} \biggl( \int_{Q_{5}^{m}} f \circ \Phi_{z} \biggr) \dif z
\le 6^{m} \int_{\R^{m}} f,
\]
whose proof is based on Fubini's theorem and is valid for every nonnegative function \(f \in L^{1}(\R^{m})\), where \(\Phi_{z} : \R^{m} \to \R^{m}\) is defined by
\[{}
\Phi_{z}(x) = \zeta(x + z) - z
\]
and \(\zeta : \R^{m} \to \R^{m}\) is any smooth function.
Assuming that \(\zeta = 0\) in \(Q_{2}^{m}\) and \(\zeta = \Id\) in \(\R^{m} \setminus Q_{3}^{m}\), we then have for every \(z \in Q_{1}^{m}\) that \(\Phi_{z}\) is constant in \(Q_{1}^{m}\) and \(\Phi_{z} = \Id\) in \(\R^{m} \setminus Q_{4}^{m}\).{}
Formally taking \(f = \abs{u}^{p} + \abs{Du}^{p}\) in the inequality above, one gets
\[{}
\fint_{Q_{1}^{m}}{\norm{u \circ \Phi_{z}}_{W^{1, p}(Q_{5}^{m})}^{p}} \dif z
\le 6^{m} \norm{u}_{W^{1, p}(\R^{m})}^{p},
\]
and then it suffices to take \(z \in Q_{1}^{m}\) such that 
\[{}
\norm{u \circ \Phi_{z}}_{W^{1, p}(Q_{5}^{m})}^{p}
\le 2 \cdot 6^{m} \norm{u}_{W^{1, p}(\R^{m})}^{p}.
\]
This formal argument can be rigorously justified using an approximation of \(u\) by smooth functions in \(W^{1, p}(\R^{m}; \R^{\nu})\), see \cite{Bousquet-Ponce-VanSchaftingen}*{Lemma 2.4} and also \cite{Hang-Lin-III}*{Section~7}.
Such an averaging procedure is reminiscent of the work of Federer and Fleming~\cite{Federer-Fleming} and was adapted to Sobolev functions by Hardt, Kinderlehrer and Lin~\cite{Hardt-Kinderlehrer-Lin}.

More generally, one can open a map around a small neighborhood of \(0\), or along the normals to a planar set.
For example, the singleton \(\{0\}\) may be replaced by a relative open subset of an \(\ell\)-dimensional plane, with \(\ell \in \{1, \dotsc, m-1\}\).{}
In this case, we obtain an opened Sobolev map depending locally on \(\ell\) variables, and constant along \(m-\ell\) normal directions.
The following statement coincides with \cite{Bousquet-Ponce-VanSchaftingen}*{Proposition 2.2} 
and we omit the proof.

\begin{proposition}\label{openingpropSimplex}
Let \(\ell \in \{0, \dotsc, m-1\}\), \(\eta>0\), \(0<\underline{\rho}<\overline{\rho}\) and \(A\subset \R^\ell\) be an open set. 
For every \(u\in W^{1,p}(A\times Q^{m-\ell}_{\overline{\rho}\eta} ; \R^\nu)\), there exists a smooth map \(\zeta : \R^{m-\ell} \to \R^{m-\ell}\) such that
\begin{enumerate}[$(i)$]
\item  \(\zeta\) is constant in \(Q^{m-\ell}_{\underline{\rho}\eta}\),
\item \(\{x\in \R^m : \zeta(x)\not= x\} \subset  Q^{m-\ell}_{\overline{\rho}\eta}\) and \(\zeta(Q^{m-\ell}_{\overline{\rho}\eta}) \subset Q^{m-\ell}_{\overline{\rho}\eta}\),
\item \label{orthosimplex} if \(\Phi : \R^m \to \R^m\) is defined for every \(x=(x',x'')\in \R^\ell \times \R^{m-\ell}\) by
\[
\Phi(x)=\bigl(x', \zeta(x'')\bigr),
\]
then \(u\circ \Phi \in W^{1,p}(A\times Q^{m-\ell}_{\overline{\rho}\eta} ; \R^{\nu})\) and
\[
\norm{D(u \circ \Phi)}_{L^p(A\times Q^{m-\ell}_{\overline{\rho}\eta})}\leq C \norm{Du}_{L^p(A\times Q^{m-\ell}_{\overline{\rho}\eta})},
\] 
for some constant \(C>0\) depending on \(m, p, \underline{\rho}\) and \(\overline{\rho}\).
\end{enumerate}
\end{proposition}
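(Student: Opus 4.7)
The plan is to exhibit $\zeta$ as one member of a family of candidates indexed by a small translation vector and to extract a satisfactory $\zeta$ from this family by averaging the $L^p$-energy of $u\circ\Phi$ over the parameter and invoking the pigeonhole principle; this is the Federer--Fleming-type averaging adapted to Sobolev maps by Hardt--Kinderlehrer--Lin.

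Fix auxiliary radii $\underline{\rho} < \rho_1 < \rho_2 < \overline{\rho}$ and a reference smooth map $\zeta_{\ast}\colon\R^{m-\ell}\to\R^{m-\ell}$ which vanishes on $Q^{m-\ell}_{\rho_1\eta}$, equals the identity outside $Q^{m-\ell}_{\rho_2\eta}$, sends $Q^{m-\ell}_{\rho_2\eta}$ into itself, and has uniformly bounded derivative; such a $\zeta_{\ast}$ can be written down by radially interpolating via a smooth cut-off. Setting $\delta := \min(\rho_1 - \underline{\rho},\,\overline{\rho} - \rho_2)$, define for each $z \in Q^{m-\ell}_{\delta\eta}$
\[
  \zeta_z(x'') := \zeta_{\ast}(x''+z) - z.
\]
A direct check gives $\zeta_z \equiv -z$ on $Q^{m-\ell}_{\underline{\rho}\eta}$, $\zeta_z = \mathrm{Id}$ outside $Q^{m-\ell}_{\overline{\rho}\eta}$, and $\zeta_z(Q^{m-\ell}_{\overline{\rho}\eta}) \subset Q^{m-\ell}_{\overline{\rho}\eta}$, so properties $(i)$ and $(ii)$ hold for the corresponding $\Phi_z(x',x'') := (x', \zeta_z(x''))$ regardless of the choice of $z$ in this range.

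The crux is estimate $(iii)$. Mollifying $u$ to $u_\varepsilon \in C^\infty$, the classical chain rule gives $\abs{D(u_\varepsilon\circ\Phi_z)(x)} \le \|D\zeta_{\ast}\|_{\infty}\,\abs{Du_\varepsilon(x',\zeta_z(x''))}$. Averaging over $z \in Q^{m-\ell}_{\delta\eta}$ and successively changing variables $x'' \mapsto w := x''+z$ and, for fixed $w$, $z \mapsto y := \zeta_{\ast}(w) - z$ (both translations, hence measure-preserving), the averaged integral is rewritten as
\[
\int\limits_{A}\int\limits_{\R^{m-\ell}} \abs{Du_\varepsilon(x',y)}^p\,\omega(y)\dif y\dif x',
\]
where $\omega(y)$ is the $(m-\ell)$-dimensional measure of $\{w : \zeta_{\ast}(w) - y \in Q^{m-\ell}_{\delta\eta}\}$ intersected with an analogous range-type constraint coming from $w - z \in Q^{m-\ell}_{\overline{\rho}\eta}$. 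Splitting the $w$-slice according to the constancy, transition, and identity regions of $\zeta_{\ast}$, one bounds $\omega(y)$ uniformly in $y$ by a constant depending only on $m$, $\underline{\rho}$, $\overline{\rho}$ and a factor $\eta^{m-\ell}$: on the constancy region $\zeta_{\ast}\equiv 0$, so the constraint reduces to a binary condition on $y$; on the identity region the slice is a translate of $Q^{m-\ell}_{\delta\eta}$; and on the transition region, which is a fixed compact set once $\zeta_{\ast}$ is chosen, the slice is contained in a set of bounded measure because $\zeta_{\ast}$ is fixed.

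Combining, after normalization by $\abs{Q^{m-\ell}_{\delta\eta}} = (2\delta\eta)^{m-\ell}$, this yields
\[
\fint\limits_{Q^{m-\ell}_{\delta\eta}} \int\limits_{A\times Q^{m-\ell}_{\overline{\rho}\eta}} \abs{D(u_\varepsilon\circ\Phi_z)}^p \dif x \dif z \,\le\, C\int\limits_{A\times Q^{m-\ell}_{\overline{\rho}\eta}} \abs{Du_\varepsilon}^p\dif x,
\]
with $C = C(m,p,\underline{\rho},\overline{\rho})$. The pigeonhole principle then produces a set of $z$'s of positive measure satisfying the desired pointwise estimate. Sending $\varepsilon \to 0$ along a diagonal subsequence and invoking the stability of composition of a Sobolev map with a fixed smooth Lipschitz map selects a single $z_{\ast}$ that works for the original $u$, and $\zeta := \zeta_{z_{\ast}}$ meets all three requirements. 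The main obstacle I expect is the uniform control of $\omega(y)$ on the transition region, which forces an explicit construction of $\zeta_{\ast}$ whose derivative behaves tamely along the relevant level sets; once $\zeta_{\ast}$ is chosen once and for all, the remaining argument is essentially Fubini combined with mollifier approximation.
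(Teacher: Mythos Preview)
Your proposal is correct and follows essentially the same approach as the paper: define a one-parameter family \(\zeta_z(x'')=\zeta_*(x''+z)-z\), average the energy of \(u\circ\Phi_z\) over \(z\) via Fubini and a change of variables, and select a good \(z\) by pigeonhole, making the argument rigorous through mollification. The only difference is cosmetic: the paper bounds the inner integral crudely by the full integral of \(f\) over the domain rather than tracking the weight \(\omega(y)\) region by region, which makes the transition-region analysis you flag as the ``main obstacle'' unnecessary.
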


We observe that $(\ref{orthosimplex})$ implies that \(\Phi\) is constant on the \((m-\ell)\)-dimensional cubes of inradius \(\underline{\rho}\eta\) which are orthogonal to \(A\). 
The map \(u\circ \Phi\) thus only depends on \(\ell\) variables in a neighborhood of \(A\).

In order to present the opening technique in the framework of cubications, we first need to introduce some vocabulary. First, given a set \(A\subset \R^m\) and \(\eta>0\), a cubication of \(A\) of inradius \(\eta >0\) is a family of closed cubes \(\mathcal{S}^m\) of inradius \(\eta\) such that
\begin{enumerate}[$(1)$]
\item \(\bigcup\limits_{\sigma^m \in \mathcal{S}^m}\sigma^m =A\),
\item for every \(\sigma^{m}_1,\sigma^{m}_2 \in \mathcal{S}^m\) which are not disjoint, \(\sigma^{m}_1 \cap \sigma^{m}_2\) is a common face  of dimension \(i\in \{0, \dotsc, m\}\). 
\end{enumerate}

For \(\ell \in \{0, \dots , m\}\), the set \(\mathcal{S}^{\ell}\) of all \(\ell\)-dimensional faces of all cubes in \(\mathcal{S}^m\) is called the skeleton of dimension \(\ell\). We then denote by \(S^\ell\) the union of all elements of \(\mathcal{S}^{\ell}\):
\[
S^\ell =\bigcup_{\sigma^\ell\in \mathcal{S}^\ell} \sigma^{\ell}.
\] 
A subskeleton \(\mathcal{E}^\ell\) of \(\mathcal{S}^\ell\) is simply a subfamily of \(\mathcal{S}^\ell\) and the associated subset of \(\R^m\) is 
\[
 E^\ell =\bigcup_{\sigma^\ell\in \mathcal{E}^\ell} \sigma^{\ell}.
\]
Accordingly, given a set \(A\) in \(\R^m\) equipped with a cubication, a subskeleton of \(A\) is a subfamily of the \(\ell\)-dimensional skeleton of the given cubication.

We proceed to state the main result of this section, which is essentially \cite{Bousquet-Ponce-VanSchaftingen}*{Proposition 2.1}.
   
\begin{proposition}\label{openingpropGeneral}
Let \(p\geq 1\), \(\ell \in \{0, \dotsc, m-1\}\), \(\eta > 0\), \(0 < \rho < \frac{1}{2}\), and \(\mathcal{E}^\ell\) be a 
subskeleton of \(\R^m\) of inradius \(\eta\). Then, for every \(u\in W^{1, p}(E^\ell + Q^m_{2\rho\eta}; \R^\nu)\), there exists a smooth 
map \(\Phi : \R^m \to \R^m\) such that
\begin{enumerate}[$(i)$]
\item 
for every \(i \in \{0, \dotsc, \ell\}\) and 
for every \(i\)-dimensional face \(\sigma^i \in \mathcal{E}^i\), \(\Phi\) is constant on the  \((m-i)\)-dimensional cubes of inradius \(\rho\eta\) which are orthogonal to \(\sigma^i\),
\label{itemgenopeningprop1}
\item \(\{x\in \R^m : \Phi(x)\not=x\} \subset E^{\ell}+Q^{m}_{2\rho\eta}\) and, 
for every \(\sigma^\ell \in \mathcal{E}^\ell\), 
\(\Phi(\sigma^\ell + Q^m_{2\rho\eta}) \subset \sigma^\ell + Q^m_{2\rho\eta}\,\),
\label{itemgenopeningprop4}
\item \(u \circ \Phi \in W^{1, p}(E^\ell + Q^m_{2\rho\eta}; \R^{\nu})\),
\label{itemgenopeningprop5}
and
\begin{equation*}
  \norm{D(u \circ \Phi)}_{L^p(E^\ell + Q^m_{2\rho\eta})} \leq C   \norm{D u}_{L^p(E^\ell + Q^m_{2\rho\eta})},
\end{equation*}
for some constant \(C > 0\) depending on \(m\),  \(p\) and \(\rho\),
\item
\label{itemgenopeningprop6}
for every \(\sigma^\ell \in \mathcal{E}^\ell\),
\begin{equation*}
 \norm{D(u \circ \Phi)}_{L^p(\sigma^\ell + Q^m_{2\rho\eta})} \leq C'  \norm{D u}_{L^p(\sigma^\ell + Q^m_{2\rho\eta})},
\end{equation*}
for some constant \(C' > 0\) depending on \(m\), \(p\) and \(\rho\).
\end{enumerate}
\end{proposition}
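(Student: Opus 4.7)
The plan is to construct $\Phi$ as a composition $\Phi_\ell \circ \dotsb \circ \Phi_0$ obtained by successively opening around the $i$-skeletons for $i = 0, 1, \dotsc, \ell$, each time invoking the local opening Proposition~\ref{openingpropSimplex}. To keep the various stages compatible, I fix a strictly decreasing sequence of scales $2\rho > \rho_0 > \rho_1 > \dotsb > \rho_\ell = \rho$, so that the opening at stage $i$ takes place inside the region where the previous openings have already forced the required constancy.

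For the base case $i = 0$, the elements of $\mathcal{E}^0$ are isolated vertices with disjoint neighborhoods; applying Proposition~\ref{openingpropSimplex} with its own parameter $\ell$ equal to $0$ separately around each vertex yields a smooth map $\Phi_0$ that is constant on a cube of inradius $\rho_0 \eta$ around each vertex, equals the identity outside $E^0 + Q^m_{2\rho\eta}$, and satisfies the global $L^p$ bound $\|D(u \circ \Phi_0)\|_{L^p} \le C \|Du\|_{L^p}$. For the inductive step, suppose $w_{i-1} := u \circ \Phi_{i-1} \circ \dotsb \circ \Phi_0$ has been built and is already constant along the $(m-j)$ normal directions within tubes of radius $\rho_{i-1}\eta$ around each $j$-face for $j < i$. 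Given $\sigma^i \in \mathcal{E}^i$, I choose local coordinates $(x', x'') \in \R^i \times \R^{m-i}$ in which $\sigma^i$ corresponds to $A \times \{0\}$, and apply Proposition~\ref{openingpropSimplex} in these coordinates to obtain a smooth map acting only on $x''$ that renders $w_{i-1}$ constant in $x''$ on $Q^{m-i}_{\rho_i \eta}$. The different $i$-faces are treated simultaneously through the Fubini-type averaging that underlies Proposition~\ref{openingpropSimplex}: one averages the $L^p$ estimate over a translation parameter $z \in Q^{m-i}_{\rho_i \eta}$ and uses Chebyshev's inequality to select a single value that controls $D(w_{i-1} \circ \Phi_i)$ globally in $L^p$, with constant depending only on $m$, $p$, and $\rho$.

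The main obstacle is checking that opening at stage $i$ does not destroy the constancy already established at stages $j < i$. This reduces to the geometric observation that $\Phi_i$ modifies only the $(m-i)$ coordinates orthogonal to $\sigma^i$, which include all directions orthogonal to any lower-dimensional subface $\tau^j \subset \sigma^i$; hence a function that is already constant along those directions remains so after composition with $\Phi_i$. After $\ell + 1$ such iterations, $\Phi := \Phi_\ell \circ \dotsb \circ \Phi_0$ satisfies items~$(i)$--$(iii)$. The localized estimate~$(iv)$ is obtained by running the same averaging argument restricted to the neighborhood of a fixed $\sigma^\ell$, using item~$(ii)$ to guarantee that $\Phi$ coincides with the identity outside $E^\ell + Q^m_{2\rho\eta}$. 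Since this statement is Proposition~2.1 of \cite{Bousquet-Ponce-VanSchaftingen}, the proof amounts to a transcription of that argument, with the skeleton-by-skeleton induction and the nested scales $\rho_0 > \dotsb > \rho_\ell$ chosen so that all constants depend only on $m$, $p$, and $\rho$.
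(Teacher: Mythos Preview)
Your outline matches the paper's induction on the skeleton dimension with nested radii \(2\rho > \rho_0 > \dotsb > \rho_\ell = \rho\), and the geometric observation you isolate (that the directions orthogonal to \(\sigma^i\) are contained in those orthogonal to any subface \(\tau^j\subset\sigma^i\)) is exactly the fact driving the argument. However, the way you package the construction as a composition \(\Phi=\Phi_\ell\circ\dotsb\circ\Phi_0\) of globally defined smooth stage maps \(\Phi_i\) does not quite work, and your identification of the ``main obstacle'' is slightly off.

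The difficulty at stage \(i\) is not that the new opening might destroy the constancy already achieved; it is that the local openings \(\Phi_{\sigma^i}\) around distinct \(i\)-faces \(\sigma_1^i,\sigma_2^i\) act in \emph{different} normal directions and therefore disagree on the overlap \((\sigma_1^i+Q^m_{\rho_{i-1}\eta})\cap(\sigma_2^i+Q^m_{\rho_{i-1}\eta})\). There is no single smooth map \(\Phi_i\) patching them. What the paper does instead is build \(\Phi^i\) directly by the piecewise formula \(\Phi^i(x)=\Phi^{i-1}(\Phi_{\sigma^i}(x))\) on each tube, and then use precisely your geometric observation to show that in the overlap (which lies in \(\tau^r+Q^m_{\rho_{i-1}\eta}\) for some lower face \(\tau^r\)) all three points \(x,\Phi_{\sigma_1^i}(x),\Phi_{\sigma_2^i}(x)\) sit in the same \((m-r)\)-cube orthogonal to \(\tau^r\), on which \(\Phi^{i-1}\) is already constant; hence \(\Phi^{i-1}\circ\Phi_{\sigma_1^i}=\Phi^{i-1}\circ\Phi_{\sigma_2^i}=\Phi^{i-1}\) there, and \(\Phi^i\) is well defined and smooth. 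So the constancy of the \emph{previous} map \(\Phi^{i-1}\) is what makes the \emph{current} stage glue, rather than being a property one merely has to preserve. Once you rewrite your induction in terms of the maps \(\Phi^i\) (with \(\Phi=\Phi^\ell\)) rather than a factored composition, your sketch becomes the paper's proof; Assertion~\((ii)\) and the localized estimate~\((iv)\) then follow from the inclusion \(\Phi^{i-1}(\sigma^i+Q^m_{2\rho\eta})\subset\sigma^i+Q^m_{2\rho\eta}\), which is proved along the same inductive lines.
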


Here \(Q^m_r (x) = x + r Q^m\) is the cube centered at \(x\) with inradius \(r\).
For the convenience of the reader, and also because Assertion~$(\ref{itemgenopeningprop4})$ in Proposition~\ref{openingpropGeneral} is slightly more precise than the corresponding statement in \cite{Bousquet-Ponce-VanSchaftingen}, we sketch its proof.

\begin{proof}[Proof of Proposition~\ref{openingpropGeneral}] 
Let \(\rho=\rho_\ell<\dots < \rho_i<\dots<\rho_0<2\rho\). 
We define by induction on \(i\in \{0, \dotsc, \ell\}\) a map \(\Phi^i : \R^m \to \R^m\) such that 
\begin{enumerate}[$(a)$]
\item for every \(r \in \{0, \dotsc, i\}\) and every \(\sigma^r \in \mathcal{E}^r\), \(\Phi^{i}\) is constant on the \((m-r)\)-dimensional cubes of inradius \(\rho_i\eta\) which are orthogonal to \(\sigma^r\),
\label{openingrecursive1}
\item \(\{x\in \R^m \st \Phi^i(x)\not=x\}\subset E^i+Q^{m}_{2\rho\eta}\) and, for every \(\sigma^i \in \cE^i\), \(\Phi^i(\sigma^i+Q^{m}_{2\rho\eta})\subset \sigma^i+Q^{m}_{2\rho\eta}\,\), 
\label{openingrecursive2}
\item \(u\circ \Phi^i \in W^{1,p}(E^\ell+Q^{m}_{2\rho\eta}; \R^{\nu})\),
\label{openingrecursive3}
\item for every \(\sigma^i \in \cE^i\),
\label{openingrecursive4}
\begin{equation*}
 \norm{D(u \circ \Phi^i)}_{L^p(\sigma^i+Q^{m}_{2\rho\eta})} \leq C \norm{Du}_{L^p(\sigma^i+Q^{m}_{2\rho\eta})},
\end{equation*}
for some constant \(C > 0\) depending on \(m\), \(p\) and \(\rho\).
\end{enumerate}
The map \(\Phi^\ell\) will satisfy the conclusion of the proposition.

For \(i=0\), \(\cE^0\) is the set of vertices of cubes in \(\cE^m\). The map \(\Phi^0\) is obtained by applying Proposition~\ref{openingpropSimplex} to \(u\) around each \(\sigma^0 \in \cE^0\) with parameters \(\rho_0<2\rho\) and \(\ell=0\).

Assume that the maps \(\Phi^0, \dotsc, \Phi^{i-1}\) have been constructed. We then apply Proposition~\ref{openingpropSimplex} to \(u\circ \Phi^{i-1}\) around each \(\sigma^i \in \cE^i\) with parameters \(\rho_i<\rho_{i-1}\). This gives a smooth map \(\Phi_{\sigma^i} : \R^m \to \R^m\) which is constant on the \((m-i)\)-dimensional cubes of inradius \(\rho_i\eta\) orthogonal to \(\sigma^i\).

Let \(\sigma^i_1, \sigma^i_2 \in \cE^i\) such that 
\[
(\sigma^{i}_1+Q^{m}_{\rho_{i-1}\eta})\cap (\sigma^{i}_2+ Q^{m}_{\rho_{i-1}\eta})\not=\emptyset.
\] 
We claim that for every \(x\) in this set, 
\begin{equation}\label{eq728}
\Phi^{i-1}(\Phi_{\sigma^{i}_1}(x))= \Phi^{i-1}(\Phi_{\sigma^{i}_2}(x))=\Phi^{i-1}(x).
\end{equation}
Indeed, since \(\sigma^i_1\) and \(\sigma^i_2\) are not disjoint, we can take the smallest integer \(r\in \{0, \dotsc,i-1\}\) such that \(x\in \tau^r + Q^{m}_{\rho_{i-1}\eta}\) for some face \(\tau^r\in \cE^r\) with \(\tau^r\subset \sigma^i_1\cap \sigma^i_2\).

By the formula of \(\Phi_{\sigma^{i}_j}\) given in Proposition~\ref{openingpropSimplex}, the points \(\Phi_{\sigma^{i}_1}(x)\), \(\Phi_{\sigma^{i}_2}(x)\) and \(x\) belong to the same \((m-r)\)-dimensional cube of inradius \(\rho_{i-1}\eta\) which is orthogonal to \(\tau^r\).
By induction, \(\Phi^{i-1}\) is constant on the \((m-r)\)-dimensional cubes of inradius \(\rho_{i-1}\eta\) which are orthogonal to \(\tau^r\). 
This proves claim~\eqref{eq728}.

We can thus define the map \(\Phi^i : \R^m \to \R^m\) as follows:
\[
\Phi^i(x)=
\begin{cases}
\Phi^{i-1}(\Phi_{\sigma^i}(x)) &\text{if } x\in \sigma^i + Q^{m}_{\rho_{i-1}\eta},\\
\Phi^{i-1}(x)&\text{otherwise}.
\end{cases}
\]
Assertion~$(\ref{openingrecursive1})$ follows from the above discussion which implies in particular that \(\Phi^i=\Phi^{i-1}\) on \(E^{i-1}+Q^{m}_{\rho_{i-1}\eta}\).

We proceed with the proof of assertion $(\ref{openingrecursive2})$. 
By definition of the map \(\Phi^i\), we have \(\Phi^i=\Phi^{i-1}\) on \(\R^m \setminus (E^i+Q^{m}_{\rho_{i-1}\eta})\). 
By induction, \(\Phi^{i-1}\) agrees with the identity outside \(E^{i-1}+Q^{m}_{2\rho\eta}\). 
Hence,
\[
\{x\in \R^m \st \Phi^{i}(x)\not=x\}\subset E^{i}+Q^{m}_{2\rho\eta}.
\]
Moreover, by induction, for every \(\tau^{i-1}\in \cE^{i-1}\), 
\[\Phi^{i-1}(\tau^{i-1}+Q^{m}_{2\rho\eta})\subset \tau^{i-1}+Q^{m}_{2\rho\eta}.\]
Thus for every \(\sigma^i\in\cE^i\),
\(\Phi^{i-1}(\partial \sigma^i + Q^{m}_{2\rho\eta})\subset \partial \sigma^i + Q^{m}_{2\rho\eta}\). Since \(\Phi^{i-1}(x)=x\)  for \(x\not\in E^{i-1}+Q^{m}_{2\rho\eta}\) and \((E^{i-1}+Q^{m}_{2\rho\eta})\cap (\sigma^i+Q^{m}_{2\rho\eta})\subset \partial \sigma^i + Q^{m}_{2\rho\eta}\,\), it follows that
\begin{equation}\label{eq748}
\Phi^{i-1}(\sigma^i+Q^{m}_{2\rho\eta}) \subset \sigma^i + Q^{m}_{2\rho\eta}.
\end{equation}
Now, let \(x\in \sigma^i+Q^{m}_{2\rho\eta}\). If \(x\not\in E^i+Q^{m}_{\rho_{i-1}\eta}\), then \(\Phi^i(x)=\Phi^{i-1}(x)\). From \eqref{eq748}, it follows that \(\Phi^i(x)\in \sigma^i+Q^{m}_{2\rho\eta}\). 
We now assume that \(x\in E^i+Q^{m}_{\rho_{i-1}\eta}\). 
Let \(\tau^i \in \cE^i\) be such that \(x\in \tau^i + Q^{m}_{\rho_{i-1}\eta}\). 
Then the cube \(\tau^r:=\tau^i\cap \sigma^i\) is not empty, \(x\in \tau^r+Q^{m}_{2\rho\eta}\) and from the form of \(\Phi_{\tau^i}\), we deduce that \(\Phi_{\tau^i}(x)\in \tau^r + Q^{m}_{2\rho\eta}\). 
In particular, \(\Phi_{\tau^i}(x)\in \sigma^i+Q^{m}_{2\rho\eta}\) and thus by \eqref{eq748}, \(\Phi^i(x)=\Phi^{i-1}(\Phi_{\tau^i}(x))\in \sigma^i + Q^{m}_{2\rho\eta}\). This completes the proof of $(\ref{openingrecursive2})$.

The proofs of Assertions $(\ref{openingrecursive3})$ and $(\ref{openingrecursive4})$ are the same as in \cite{Bousquet-Ponce-VanSchaftingen} and we omit them.
\end{proof}

When \(\ell \leq p+1\), the function \(u \circ \Phi\) given by Proposition~\ref{openingpropGeneral} satisfies
\begin{equation}\label{eq2087}
\frac{1}{r^{m-p}} \int_{Q^{m}_r(x)}\abs{D(u \circ \Phi)}^p \leq \frac{C}{\eta^{m-p}} 
\int_{\tau^{\ell-1}+Q^{m}_{\rho\eta}}\abs{D(u\circ \Phi)}^p, 
\end{equation}
for every cube \(Q^{m}_r(x)\subset \tau^{\ell-1}+Q^{m}_{\rho\eta}\) and every face \(\tau^{\ell-1}\in \cE^{\ell-1}\).
This estimate follows from the fact that \(\Phi\) is constant on the \((m-\ell+1)\)-dimensional cubes of inradius \(\rho\eta\) which are orthogonal to \(\tau^{\ell-1}\).

Indeed, without loss of generality, we can assume that \(\tau^{\ell-1}\) has the form \( [-\eta ,\eta]^{\ell-1}\times \{0''\}\), where \(0'' \in \R^{m - \ell + 1}\). 
Accordingly, we write every \(y\in \tau^{\ell-1}+Q^{m}_{\rho\eta}\) as \(y=(y',y'')\in \R^{\ell-1}\times \R^{m-\ell+1}\). 
By construction, for every \(y\in \tau^{\ell-1}+Q^{m}_{\rho\eta}\,\), \((u \circ \Phi)(y)= (u \circ \Phi)(y',0'')\). 
This implies
\[{}
\resetconstant
\begin{split}
\int_{Q^{m}_r(x)}\abs{&D(u \circ \Phi)(y)}^p\dif y \\
&\leq \C r^{m-\ell+1} \int_{Q^{\ell-1}_r(x')}\abs{D(u \circ \Phi)(y',0'')}^p\dif y'\\
&\leq \Cl{cte-1453} \frac{r^{m-\ell+1}}{\eta^{m-\ell+1}} \int_{Q^{\ell-1}_r(x')\times Q^{m-\ell+1}_{\rho\eta}(0'')}
\abs{D(u \circ \Phi)(y',y'')}^p\dif y'\dif y''\\
&\leq \Cr{cte-1453} \frac{r^{m-\ell+1}}{\eta^{m-\ell+1}} \int_{\tau^{\ell-1}+Q^{m}_{\rho\eta}}
\abs{D(u \circ \Phi)(y',y'')}^p\dif y'\dif y''.
\end{split}
\]
In the last line, we have used the fact that \(Q^{\ell-1}_r(x')\times Q^{m-\ell+1}_{\rho\eta}(0'')\subset \tau^{\ell-1}+Q^{m}_{\rho\eta}\) which in turn follows from the assumption \(Q^{m}_r(x)\subset \tau^{\ell-1}+Q^{m}_{\rho\eta}\) and the explicit form of \(\tau^{\ell-1}\).
Hence,
\[
\frac{1}{r^{m-p}} \int_{Q^{m}_r(x)}\abs{D(u \circ \Phi)}^p 
\leq \frac{\C}{\eta^{m-p}}\Big(\frac{r}{\eta}\Big)^{p-\ell+1} 
\int_{\tau^{\ell-1}+Q^{m}_{\rho\eta}}\abs{D(u \circ \Phi)}^p,
\]
which proves estimate~\eqref{eq2087} since \(r\leq \eta\) and \(\ell\leq p+1\).


\subsection{Adaptive smoothing}%
\label{sectionAdaptiveSmoothing}

A second tool is the adaptive smoothing, in which the function is smoothened by mollification at a variable scale.
More precisely, given \(u \in L^{1}_{\mathrm{loc}}(\R^{m}; \R^{\nu})\) and a nonnegative function 
\(\varphi \in C_c^\infty(B_1^m)\) such that \(\int_{B_{1}^{m}}{\varphi} = 1\), define the map \(\varphi_{\psi} \ast u \st \R^{m} \to \R^\nu\)
\begin{equation}
	\label{eqConvolution}
	(\varphi_{\psi} \ast u)(x)
	= \int_{B_1^m} u(x-\psi(x)y)\varphi(y) \dif y,
\end{equation}
where \(\psi \in C^\infty(\Omega)\) is a nonnegative function that plays the role of the variable parameter.
Observe that if \(\psi (x) > 0\), then by an affine change of variable we have
\[
 (\varphi_{\psi} \ast u)(x)= \frac{1}{\psi(x)^m} \int_{\Omega} u(z) \varphi\Big(\frac{x-z}{\psi(x)}\Big) \dif z;
\]
otherwise, \(\psi (x) = 0\) and since \(\int_{B_{1}^{m}}{\varphi} = 1\) we have
\[
 \varphi_{\psi} \ast u (x) = u (x).
\]

The adaptive smoothing has an immediate counterpart for functions \(u \in L^{1}_{\mathrm{loc}}(\Omega; \R^{\nu})\) in an open subset \(\Omega \subset \R^m\).{}
In this case, we define \(\varphi_{\psi} \ast u\) by \eqref{eqConvolution} at points \(x\) in the open set
\begin{equation}
	\label{eqConvolutionDomain}
\omega = \big\{x\in \Omega \st \dist{(x, \partial\Omega)} > \psi(x) \big\}.
\end{equation}
For Sobolev maps \(u\) the following property holds~\cite{Bousquet-Ponce-VanSchaftingen}*{Propositions~3.1 and~3.2}:

\begin{proposition}
\label{lemmaConvolutionEstimates}
If  \(u\in W^{1, p}(\Omega; \R^\nu)\) and \(\norm{D\psi}_{L^{\infty}(\Omega)} < 1\), then \(\varphi_{\psi} \ast u \in W^{1, p}(\omega; \R^\nu)\) where \(\omega\) is given by \eqref{eqConvolutionDomain}.{}
Moreover, the following estimates hold:
\begin{align*}
\norm{\varphi_{\psi} \ast u - u}_{L^p(\omega)}
& \leq \sup_{v \in B_1^m}{ \norm{\tau_{\psi v}( u) -  u}_{L^p(\omega)}},\\ 
\norm{D(\varphi_{\psi} \ast u)}_{L^p(\omega)} 
& \leq \frac{C}{(1 - \norm{D\psi}_{L^\infty(\omega)})^\frac{1}{p}}  \norm{D u}_{L^p(\Omega)},
\end{align*}
and
\begin{multline*}
 \norm{D(\varphi_{\psi} \ast u) - D u}_{L^p(\omega)}\\
\leq \sup_{v \in B_1^m}{ \norm{\tau_{\psi v}(D u) - D u}_{L^p(\omega)}} + \frac{C'}{(1 - \norm{D\psi}_{L^\infty(\omega)})^\frac{1}{p}}  
 \norm{D u}_{L^p(A)},
\end{multline*}
for some  constants \(C, C'>0\) depending on \(m\) and \(p\), where
\(
 A = \bigcup\limits_{x \in  \omega \cap\supp{D\psi}} B_{\psi(x)}^m(x).
\) 
\end{proposition}

In this statement, \(\tau_v(u) : \Omega + v \to \R^\nu\) denotes the translation with respect to the vector \(v \in \R^m\) defined for each \(x \in \Omega + v\) by \(\tau_v(u) (x) = u (x - v)\).

\subsection{Zero-degree homogenization}%
\label{sectionHomogenization}

This tool has been used in problems involving compact-target manifolds~\cites{Bethuel-Zheng, Bethuel, Hang-Lin}, and allows one to extend a So\-bo\-lev map \(u\) defined on the boundary of a star-shaped domain to the whole domain, by preserving the range of \(u\).
We first recall the notion of a Sobolev map on skeletons~\cite{Hang-Lin}:

\begin{definition}
\label{definitionSobolevSkeleton}
Given \(p \ge 1\), \(\ell \in \{0, \dotsc, m\}\), and an \(\ell\)-dimensional skeleton \(\mathcal{S}^\ell\), we say that a map \(u\) 
belongs to \(W^{1,p}(S^{\ell};\R^\nu)\) whenever
\begin{enumerate}[$(i)$]
\item each \(\sigma^\ell \in \mathcal{S}^\ell\), the map \(u|_{\sigma^{\ell}}\) belongs to \(W^{1, p}(\sigma^{\ell}; \R^\nu)\),
\item each \(\sigma_1^\ell, \sigma_2^\ell \in \mathcal{S}^\ell\) such that \(\sigma_1^\ell \cap \sigma_2^\ell \in \mathcal{S}^{\ell - 1}\), we have \(u|_{\sigma_1^{\ell}} = u|_{\sigma_2^{\ell}}\)  on \(\sigma_1^\ell \cap \sigma_2^\ell\) in the sense of traces.
\end{enumerate}
We then denote  
\[
\norm{u}_{W^{1,p}(S^\ell ; \R^\nu)}^p= \sum_{\sigma^\ell\in \mathcal{S}^\ell} \norm{u}_{W^{1,p}(\sigma^\ell ; \R^\nu)}^p.
\] 
\end{definition}

Given \(\ell \in \{1, \dotsc, m\}\), \(\eta > 0\) and \(a\in \R^m\), we may consider the boundary of the cube \( Q^{\ell}_{\eta}(a)\) 
as an \((\ell-1)\)-dimensional skeleton, and then \(W^{1,p}(\partial Q^{\ell}_\eta(a);\R^\nu)\) has a well-defined meaning in the sense of Definition~\ref{definitionSobolevSkeleton}.

Given  \(i \in \N_*\) and \(\eta > 0\), 
the homogenization of degree \(0\) of a map \(u:\partial  Q^{i}_{\eta}(a) \to \R^\nu\) 
is the map \(v: Q^{i}_{\eta}(a) \to \R^\nu\) defined for \(x\in  Q^{i}_{\eta}(a)\) by
\begin{equation}\label{eq446}
v(x) = u\Big(a+\eta\frac{x-a}{|x-a|_{\infty}} \Big),
\end{equation}
where \(|y|_{\infty} = \max{\big\{ \abs{y_{1}}, \dotsc, \abs{y_{i}}\big\}}\) denotes the maximum norm in \(\R^{i}\).
The basic property satisfied by this construction is the following:

\begin{proposition}
	\label{proposition_homogeneisation_0}
If \(1 \le p < i\), then, for every
\(u\in W^{1,p}(\partial Q^{i}_{\eta}(a) ; \R^\nu)\), the map \(v : Q^{i}_{\eta}(a) \to \R^\nu\) defined in \eqref{eq446} belongs to \(W^{1,p}(Q^{i}_{\eta}(a) ; \R^\nu)\) and  
\[
\norm{Dv}_{L^{p}(Q^{i}_{\eta}(a))}
\leq C\eta^{\frac{1}{p}} \norm{Du}_{L^{p}(\partial Q^{i}_{\eta}(a))}.
\]
\end{proposition}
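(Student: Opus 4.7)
By a translation I reduce to the case \(a = 0\), and I will first establish the estimate in the case \(\eta = 1\), then recover the general case by scaling.

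The key geometric ingredient is a polar-type decomposition adapted to the \(\ell^{\infty}\) norm. Every \(x \in Q^{i} \setminus \{0\}\) can be uniquely written as \(x = r\omega\) with \(r = \abs{x}_{\infty} \in (0,1]\) and \(\omega = x/\abs{x}_{\infty} \in \partial Q^{i}\); the associated change of variables satisfies
\[
\int\limits_{Q^{i}} f(x) \dif x = \int\limits_{0}^{1} \int\limits_{\partial Q^{i}} f(r\omega)\, r^{i-1} \dif \omega \dif r
\]
for every nonnegative measurable \(f\), where \(\dif \omega\) is the \((i-1)\)-dimensional Hausdorff measure on \(\partial Q^{i}\). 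This is checked face by face, since on each \((i-1)\)-face the map \((r,\omega) \mapsto r\omega\) is an affine scaling in \(\omega\).

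Assuming momentarily that \(u\) is smooth, \(v(x) = u(x/\abs{x}_{\infty})\) is smooth on \(Q^{i} \setminus \{0\}\) and a direct computation shows the pointwise bound \(\abs{Dv(r\omega)} \leq C \abs{Du(\omega)}/r\), where \(Du\) denotes the tangential derivative of \(u\) along the relevant face of \(\partial Q^{i}\). Using the polar decomposition,
\[
\int\limits_{Q^{i}} \abs{Dv}^{p}
\leq C \int\limits_{0}^{1} \int\limits_{\partial Q^{i}} \frac{\abs{Du(\omega)}^{p}}{r^{p}} r^{i-1} \dif \omega \dif r
= C \Bigl(\,\int\limits_{0}^{1} r^{i-1-p} \dif r \Bigr) \int\limits_{\partial Q^{i}} \abs{Du}^{p}.
\]
The radial integral converges precisely because \(p < i\), yielding \(\norm{Dv}_{L^{p}(Q^{i})} \leq C \norm{Du}_{L^{p}(\partial Q^{i})}\) with \(C\) depending only on \(i\) and \(p\).

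To remove the smoothness assumption I approximate. Given \(u \in W^{1,p}(\partial Q^{i}; \R^{\nu})\) in the sense of Definition~\ref{definitionSobolevSkeleton}, I pick a sequence \((u_{k})_{k\in \N}\) of maps that are smooth on each face and compatible at the traces, converging to \(u\) in \(W^{1,p}(\partial Q^{i}; \R^{\nu})\) (such maps exist by standard density on each face combined with a gluing/trace adjustment on \((i-2)\)-faces). The corresponding homogenized maps \(v_{k}\) are smooth on \(Q^{i} \setminus \{0\}\); the estimate above applied to \(v_{k} - v_{\ell}\) shows that \((v_{k})_{k \in \N}\) is Cauchy in \(\dot W^{1,p}\) on \(Q^{i} \setminus \{0\}\), and it converges pointwise a.e.\ to \(v\). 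A standard removable singularity argument (using \(p < i\), so that \(\{0\}\) has zero \(W^{1,p}\)-capacity) then yields \(v \in W^{1,p}(Q^{i}; \R^{\nu})\) with the same bound. Finally, the rescaling \(x \mapsto x/\eta\) transforms integrals according to \(\int_{Q^{i}_{\eta}} \abs{Dv}^{p} = \eta^{i-p} \int_{Q^{i}} \abs{D\tilde v}^{p}\) and \(\int_{\partial Q^{i}} \abs{D\tilde u}^{p} = \eta^{p-(i-1)} \int_{\partial Q^{i}_{\eta}} \abs{Du}^{p}\), producing the factor \(\eta\) on the right-hand side, i.e.\ \(\norm{Dv}_{L^{p}(Q^{i}_{\eta}(a))} \le C \eta^{1/p} \norm{Du}_{L^{p}(\partial Q^{i}_{\eta}(a))}\).

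The main technical obstacle is the passage from the smooth computation to the Sobolev statement: one must justify that the singularity at \(x = 0\) does not produce extra distributional mass. This is precisely where the assumption \(p < i\) is used twice, in the integrability of \(r^{i-1-p}\) and in the removability of the origin, and these two facts together make the proof work.
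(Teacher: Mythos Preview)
The paper does not actually prove Proposition~\ref{proposition_homogeneisation_0}: it is stated as ``the basic property'' of the zero-degree homogenization, with references to the earlier literature~\cites{Bethuel-Zheng, Bethuel, Hang-Lin}, and the text moves directly to its iterated version in Proposition~\ref{proposition_homogeneisation_1}. Your argument is the standard one underlying those references---the \(\ell^{\infty}\) polar decomposition, the pointwise bound \(\abs{Dv(r\omega)} \le C\abs{Du(\omega)}/r\), integrability of \(r^{i-1-p}\) exactly when \(p < i\), and the scaling to recover the factor \(\eta^{1/p}\)---and it is correct. The only place one might streamline is the approximation step: since the polar change of variables is bilipschitz away from the origin, one can compute the weak derivative of \(v\) directly from that of \(u\) face by face and avoid constructing smooth approximants compatible across the \((i-2)\)-faces; the removability of the origin (zero \(W^{1,p}\)-capacity for \(p < i\)) then finishes the job exactly as you indicate.
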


Iterating the zero-degree homogenization described above, one extends Sobolev functions defined on lower-dimensional subskeletons of \(\R^m\) to an \(m\)-dimensional subskeleton. 
We apply this strategy  to prove the following proposition that will be used in the proof of Theorems~\ref{theoremMainNonInteger} and~\ref{theorem_Ap_CNS}:

\begin{proposition}
\label{proposition_homogeneisation_1}
Let \(\ell\in \{0, \dotsc, m-1\}\), \(\eta>0\), \(\cE^m\) be a subskeleton of \(\R^m\) of inradius \(\eta\) and \(\cS^{m-1}\) be a subfamily of 
\(\cE^{m-1}\). 
If \(p < \ell + 1\), then for every continuous function \(u : E^{\ell}\cup S^{m-1} \to \R^{\nu}\) such that
\begin{enumerate}[$(i)$]
\item \(u|_{E^\ell} \in W^{1,p}(E^\ell ; \R^\nu)\),
\item \(u|_{S^i} \in W^{1,p}(S^i ; \R^\nu)\), for every \(i\in \{\ell+1,\dotsc, m-1\}\), 
\end{enumerate}
 there exists \(v\in W^{1,p}(E^m ;\R^\nu)\) such that \(v(E^m) \subset u(E^\ell \cup S^{m-1})\), \(v=u\) on \(S^{m-1}\) in the sense of traces, and
\[
\norm{Dv}_{L^{p}(E^m)} \leq C \bigg( \eta^\frac{m - \ell}{p} \norm{Du}_{L^{p}(E^\ell)}  + \sum_{i=\ell+1}^{m-1} \eta^\frac{m - i}{p}\norm{Du}_{L^{p}(S^i)}\bigg). 
\]
\end{proposition}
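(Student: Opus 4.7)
The plan is to build $v$ on $E^m$ by induction on the dimension of the skeleton, extending at each stage by the zero-degree homogenization of Proposition~\ref{proposition_homogeneisation_0} on $(i+1)$-cubes that are not contained in $S^{m-1}$, while preserving the original values of $u$ on $(i+1)$-cubes that do lie in $S^{m-1}$. Concretely, set $v^\ell := u|_{E^\ell}$, and for $i \in \{\ell, \dotsc, m-1\}$ define $v^{i+1}$ face by face: for each $\sigma^{i+1} \in \mathcal{E}^{i+1}$ with $\sigma^{i+1} \subset S^{m-1}$, put $v^{i+1}|_{\sigma^{i+1}} := u|_{\sigma^{i+1}}$ (which is available by the hypothesis $u|_{S^{i+1}} \in W^{1,p}(S^{i+1})$); for each $\sigma^{i+1} \in \mathcal{E}^{i+1}$ with $\sigma^{i+1} \not\subset S^{m-1}$, take $v^{i+1}|_{\sigma^{i+1}}$ to be the zero-degree homogenization about the center of $\sigma^{i+1}$ of the boundary datum $v^i|_{\partial \sigma^{i+1}}$. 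At the last stage $i+1 = m$ only the second case occurs, since $S^{m-1}$ is $(m-1)$-dimensional. Finally set $v := v^m$.

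The invariant maintained along the induction is that $v^i|_{\sigma^i} = u|_{\sigma^i}$ for every $\sigma^i \in \mathcal{E}^i$ with $\sigma^i \subset S^{m-1}$, and that $v^i(E^i) \subset u(E^\ell \cup S^{m-1})$. This invariant is exactly what is needed to verify the trace-matching condition of Definition~\ref{definitionSobolevSkeleton} at each step: on a common $i$-face $\rho^i$ of two $(i+1)$-cubes, if either of them falls in the first case above then $\rho^i \subset S^{m-1}$, so by the induction hypothesis the other (homogenization) side also reads $u|_{\rho^i}$ as trace. Hence $v^{i+1} \in W^{1,p}(E^{i+1}; \R^\nu)$, and the final $v = v^m$ satisfies $v = u$ on $S^{m-1}$ together with the stated range inclusion (homogenization propagates only boundary values).

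For the quantitative bound, Proposition~\ref{proposition_homogeneisation_0} applies at each stage since $p < \ell + 1 \le i + 1$. On a homogenized face it yields
\[
\norm{Dv^{i+1}}_{L^p(\sigma^{i+1})}^p \le C \eta \, \norm{Dv^i}_{L^p(\partial \sigma^{i+1})}^p,
\]
whereas on an $S$-face $\norm{Dv^{i+1}}_{L^p(\sigma^{i+1})} = \norm{Du}_{L^p(\sigma^{i+1})}$. Summing over $\sigma^{i+1}$ and using that each $i$-face belongs to a dimension-bounded number of $(i+1)$-cubes, one obtains for $i + 1 \le m - 1$
\[
\norm{Dv^{i+1}}_{L^p(E^{i+1})}^p \le C \eta \, \norm{Dv^i}_{L^p(E^i)}^p + \norm{Du}_{L^p(S^{i+1})}^p,
\]
and for $i + 1 = m$ simply $\norm{Dv^m}_{L^p(E^m)}^p \le C \eta \, \norm{Dv^{m-1}}_{L^p(E^{m-1})}^p$. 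Iterating from $i = \ell$ upward accumulates a factor $\eta^{m-\ell}$ in front of $\norm{Du}_{L^p(E^\ell)}^p$ and factors $\eta^{m-j}$ in front of $\norm{Du}_{L^p(S^j)}^p$ for $\ell + 1 \le j \le m - 1$; taking $p$-th roots gives the announced inequality.

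The step that requires the most attention is the trace-matching argument at the interface between $S$-faces and homogenized faces, which is precisely what forces the construction to carry $u$ unchanged along the $S$-skeleton throughout the induction rather than overwriting it by homogenization; it is also the reason why the $S^j$ contributions appear with powers of $\eta$ governed by their codimension in $E^m$.
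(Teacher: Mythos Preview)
Your proof is correct and follows essentially the same approach as the paper's: both build \(v\) by induction on the skeleton dimension, setting \(v^{i} = u\) on faces lying in \(S^{m-1}\) and applying the zero-degree homogenization (Proposition~\ref{proposition_homogeneisation_0}) on the remaining faces, then iterate the resulting recursive estimate. Your write-up is in fact slightly more explicit than the paper's on the trace-matching invariant \(v^{i}|_{\sigma^{i}} = u|_{\sigma^{i}}\) for \(\sigma^{i} \subset S^{m-1}\), which is exactly the point the paper records in one line.
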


\begin{proof}
Let \(v^\ell : E^\ell \to \R^\nu\) be defined by \(v^\ell = u\) in \(E^\ell\).
We define by induction on \(i \in \{\ell + 1, \dotsc, m-1\}\) a map \(v^{i}: E^{i} \to \R^{\nu}\)
as follows:
\begin{enumerate}[\((a)\)]
\item 
	\label{proposition_homogeneisation_1-First}
	for every \(\sigma^i \in \cE^i \setminus \cS^i\), we apply the zero-degree homogenization on the face \(\sigma^i\) (Proposition~\ref{proposition_homogeneisation_0}) to define \(v^i\) on \(\sigma^i\), so that \(v^i = v^{i-1}\) on \(\partial\sigma^i\) in the sense of traces and
\[
\resetconstant
\int_{\sigma^{i}} \abs{Dv^{i}}^p 
\leq 
\C \eta\int_{\partial \sigma^i} \abs{Dv^{i-1}}^p. 
\]
\item 
	\label{proposition_homogeneisation_1-Second} for every \(\sigma^i \in \cS^i\), we take \(v^i = u\) in \(S^i\), whence
\[	
\int_{\sigma^{i}} \abs{Dv^{i}}^p 
= \int_{\sigma^i} \abs{Du}^p. 
\]
\end{enumerate}
With this definition, we have \(v^i \in W^{1, p}(E^i; \R^\nu)\) since for any given \(\sigma_1^i, \sigma_2^i \in \cE^i\) such that \(\sigma_1^i \cap \sigma_2^i \in \cE^{i-1}\) we have \(v^i|_{\sigma_1^i} = v^i|_{\sigma_2^i}\) on \(\sigma_1^i \cap \sigma_2^i\) in the sense of traces.
From \((\ref{proposition_homogeneisation_1-First})\) and \((\ref{proposition_homogeneisation_1-Second})\),
\[
\int_{E^{i}} \abs{Dv^{i}}^p
\le \C \eta \int_{E^{i-1}} \abs{Dv^{i - 1}}^p + \int_{S^{i}} \abs{Du}^p.
\]
Iterating these estimates we get
\[
\int_{E^{m-1}} \abs{Dv^{m-1}}^p
\le \C \bigg(\eta^{m-1-\ell} \int_{E^{\ell}} \abs{Dv^{\ell}}^p + \sum_{i = \ell+1}^{m-1} \eta^{m-1-i} \int_{S^{i}} \abs{Du}^p \bigg).
\]
From the construction of \(v^i\) we also have 
\[
v^{i}(E^{i}) \subset v^{i-1}(E^{i-1}) \cup u(S^{i}).
\]
Iterating these inclusions we deduce that
\[
v^{m-1}(E^{m-1}) \subset v^\ell(E^\ell) \cup \bigcup_{i = \ell+1}^{m-1}{u(S^{i})} \subset u(E^\ell \cup S^{m-1}).
\]

The map \(v^{m-1} : E^{m-1} \to \R^{\nu}\) extends by zero-degree homogenization on each cube \(\sigma^{m} \in \cE^{m}\) to \(v^{m} : E^{m} \to \R^{\nu}\), with \(v^{m}(E^{m}) = v^{m-1}(E^{m-1})\) and
\[{}
\int_{E^{m}} \abs{Dv^{m}}^p
\le \C \, \eta \int_{E^{m-1}} \abs{Dv^{m-1}}^p.
\]
The function \(v^m\) thus satisfies the required properties.
\end{proof}


\section{Trimming property}
\label{sectionExtensionProperty}

The next proposition reformulates the trimming property (Definition~\ref{definitionExtensionProperty}), replacing \emph{smooth} maps by \emph{continuous} Sobolev maps.

\begin{proposition}\label{proposition_continuous_trimming_property}
Let \(p \in \N_{*}\).
The manifold \(N^n\) satisfies the trimming property of dimension \(p\) if and only if
there exists a constant \(C' > 0\) such that, for each map \(u \in W^{1, p}(Q^p; N^n)\) with \(u \in W^{1, p}(\partial Q^p; N^n)\), 
\[
\norm{Dv}_{L^p(Q^p)}
\le C' \norm{Du}_{L^p(Q^p)},
\]
for some \(v \in W^{1, p}(Q^p; N^n) \cap C^0(\overline{Q^p}; N^n)\) such that \(u = v\) on \(\partial Q^p\) in the sense of traces.
\end{proposition}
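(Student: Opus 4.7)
My plan is to prove both implications by approximation combined with a gluing in a thin boundary collar, making use of the opening technique of Section~\ref{sectionOpening}.

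For the direction from Definition~\ref{definitionExtensionProperty} to the continuous reformulation, suppose $u \in W^{1,p}(Q^p; N^n)$ with $f := u|_{\partial Q^p} \in W^{1,p}(\partial Q^p; N^n)$. Since each face of $\partial Q^p$ has dimension $p-1$, the Morrey--Sobolev embedding applied face-by-face shows that $f$ is continuous, with range in a compact set $K \subset N^n$. I first approximate $f$ uniformly and in $W^{1,p}(\partial Q^p)$ by smooth maps $f_k \in C^\infty(\partial Q^p; N^n)$, reducing to a compact target via Lemma~\ref{lemmaNonCompactToCompact} and convolving-then-projecting via the nearest point projection $\Pi$ as in Proposition~\ref{proposition_high_smooth_bounded_density}. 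Next, I build a Sobolev extension $u_k$ of $f_k$ satisfying $\norm{Du_k}_{L^p(Q^p)} \le C\norm{Du}_{L^p(Q^p)} + o(1)$ by first applying the opening technique (Proposition~\ref{openingpropGeneral}) to $u$ along the $(p-1)$-skeleton $\partial Q^p$, yielding a map $u^{\mathrm{op}}$ that depends only on the tangential variable in a collar, still has trace $f$, and satisfies $\norm{Du^{\mathrm{op}}}_{L^p(Q^p)} \le C\norm{Du}_{L^p(Q^p)}$; and then in a thinner collar of width $\eta_k$ replacing $f$ by $f_k$ via
\[
\Pi\bigl(\chi(t/\eta_k) f_k(y) + (1 - \chi(t/\eta_k)) f(y)\bigr),
\]
where $(y,t) \in \partial Q^p \times [0, \eta_k]$ and $\chi\colon[0,1]\to[0,1]$ is smooth with $\chi(0)=1$ and $\chi(1)=0$. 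Uniform closeness of $f_k$ to $f$ keeps the convex combination inside the tubular neighborhood where $\Pi$ is defined. Applying the trimming property to $u_k$ then yields $v_k \in C^\infty(\overline{Q^p}; N^n)$ extending $f_k$ with controlled energy, and a symmetric gluing in a further thin collar corrects the boundary trace back from $f_k$ to $f$ using the continuity of $v_k$, producing the desired continuous extension $v$ of $f$.

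The reverse implication proceeds in the opposite order. Given smooth $f$ with a Sobolev extension $u$, apply the continuous reformulation to produce $v \in W^{1,p}(Q^p; N^n) \cap C^0(\overline{Q^p}; N^n)$ extending $f$ with $\norm{Dv}_{L^p(Q^p)} \le C\norm{Du}_{L^p(Q^p)}$. Since $v$ is continuous, its range is a compact subset of $N^n$, and I can extend $v$ to a slightly larger cube by the formula $f \circ \pi$ on the outer annular region (with $\pi$ the radial projection onto $\partial Q^p$), regularize by convolution, and project via $\Pi$ to obtain a smooth $N^n$-valued approximation converging to $v$ in $W^{1,p}$. A final collar gluing, analogous to the one above, corrects the boundary trace back to $f$, producing the desired smooth extension of $f$.

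The main obstacle in both directions is controlling the $L^p$ energy of the boundary collar gluing. The interpolation from $f$ to $f_k$ in a collar of width $\eta$ generates radial derivatives of order $\norm{f - f_k}_{L^\infty}/\eta$ integrated over a volume comparable to $\eta$, giving an added energy of order $\eta^{1-p}\norm{f - f_k}_{L^\infty}^p$; choosing for instance $\eta_k = \norm{f - f_k}_{L^\infty}$ makes this tend to zero. The required uniform convergence $f_k \to f$ comes precisely from the Morrey--Sobolev embedding on the $(p-1)$-faces of $\partial Q^p$ applied to the $W^{1,p}(\partial Q^p)$-approximation, which is why the proposition pertains to integer $p$ equal to the critical exponent on $\partial Q^p$.
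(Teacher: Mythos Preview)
Your strategy is broadly sound, but there is a concrete error in the forward direction. You assert that the opened map $u^{\mathrm{op}}$ obtained from Proposition~\ref{openingpropGeneral} ``still has trace $f$'' on $\partial Q^p$. This is false: opening replaces the values of $u$ near each $(p-1)$-face by the value of $u$ on a single parallel slice chosen by the averaging argument, and there is no reason that slice coincides with $\partial Q^p$. Concretely, near a face $\sigma^{p-1}$ the map $\Phi$ of Proposition~\ref{openingpropSimplex} has the form $\Phi(x', x'') = (x', \zeta(x''))$ with $\zeta$ constant near $0$, so $u^{\mathrm{op}}|_{\sigma^{p-1}}(x') = u(x', c)$ for some $c$ determined by the Fubini step, not $u(x', 0) = f(x')$. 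Your collar interpolation $\Pi(\chi f_k + (1-\chi)f)$ therefore does not match $u^{\mathrm{op}}$ at the inner edge of the collar; and if you instead interpolate between $f_k$ and the actual opened trace $g = u^{\mathrm{op}}|_{\partial Q^p}$, you lose the uniform proximity of $g$ to $f_k$ needed to keep the convex combination in the domain of $\Pi$.

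The remedy is to replace opening by zero-degree homogenization of the boundary trace: set $w(x) = u(x/\lambda)$ on $Q^p_\lambda$ and $w(x) = u(x/\abs{x}_\infty)$ on the collar $Q^p \setminus Q^p_\lambda$. This $w$ genuinely depends only on the angular variable in the collar, has trace exactly $f$, is continuous there by Morrey--Sobolev, and satisfies $\norm{Dw}_{L^p(Q^p)}^p \le C\bigl(\norm{Du}_{L^p(Q^p)}^p + (1-\lambda)\norm{Df}_{L^p(\partial Q^p)}^p\bigr)$. With this replacement your argument goes through. The paper in fact avoids your approximation $f_k \to f$ altogether: since $w$ is already continuous in the collar, one mollifies $w$ in an intermediate annulus $Q^p_{r_2}\setminus \overline{Q^p_{r_1}}$ with $\lambda<r_1<r_2<1$, interpolates with a cutoff and projects via $\Pi$ to obtain a map that is \emph{smooth} on some interior $\partial Q^p_r$, applies Definition~\ref{definitionExtensionProperty} directly on $Q^p_r$, and keeps $w$ unchanged on the outer collar. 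This is a slightly cleaner route with the same energy accounting you describe.

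Your reverse implication is essentially correct and close to the paper's version, which interpolates directly between $\varphi_\varepsilon * v$ and a fixed smooth extension $\tilde f$ of $f$ near $\partial Q^p$ (rather than extending $v$ radially outward and then re-gluing), controlling the cross term $\norm{v - \tilde f}_{L^p}$ in the collar by the Poincar\'e inequality since $v - \tilde f$ vanishes on $\partial Q^p$.
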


\begin{proof}
We begin with the direct implication.
For this purpose, let \(u \in W^{1, p}(Q^p; N^n)\) be such that \(u \in W^{1, p}(\partial Q^p; N^n)\). We regularize \(u\) in two different ways: near the boundary of \(Q^p\), this is done by zero-degree homogenization of \(u|_{\partial Q^p}\); far from the boundary, we use the trimming property. We  paste the two different approximations with a mollification and cut-off argument in such a way that the approximating function takes its values in a neighborhood of \(N^n\). This allows us to project it back on \(N^n\).

More precisely, given \(\frac{1}{2} \le \lambda < 1\), we set \(w : Q^p \to N^n\) to be the function defined for \(x \in Q^p\) by
\[
w(x)=
\begin{cases}
	u({x}/{\lambda}) & \text{if } x \in Q_\lambda^p,\\
	u({x}/{\abs{x}_\infty}) & \text{if } x \in Q^p \setminus Q_\lambda^p.
\end{cases} 
\]
Then \(w \in W^{1, p}(Q^p; N^n)\), \(w\) is continuous in \(Q^p \setminus Q_\lambda^p\) by the Morrey--Sobolev inequality and
\[{}
\resetconstant
\int_{Q^p} \abs{Dw}^p
\le \C \bigg( \int_{Q^p} \abs{Du}^p + (1 - \lambda)\int_{\partial Q^p} \abs{Du}^p \bigg).
\]

Take \(\lambda < r_1 < r_2 < 1\).
If \((\varphi_{\varepsilon})_{\varepsilon > 0}\) is a family of smooth mollifiers with \(\supp \varphi_{\varepsilon}\subset B_{\varepsilon}^p\), then the function \(\varphi_\varepsilon * w\) is smooth on \(Q^p_{r_2} \setminus \overline{Q^p_{r_1}}\), for every \(0 < \varepsilon \le \min{\{r_1 - \lambda, 1 - r_2\}}\).
Given \(\theta \in C_c^\infty(Q^p_{r_2} \setminus \overline{Q^p_{r_1}})\), then for \(\varepsilon > 0\) small the function \(v : Q^p \to N^n\) such that
\[
v(x)
= \begin{cases}
w(x)	& \text{if \(x \in Q^p \setminus (Q^p_{r_2} \setminus \overline{Q^p_{r_1}})\),}\\
\Pi\big( (1 - \theta(x)) w(x) + \theta(x) \varphi_\varepsilon*w(x) \big)
& \text{if \(x \in Q^p_{r_2} \setminus \overline{Q^p_{r_1}}\),}
\end{cases}
\]
is well-defined and belongs to \(W^{1, p}(Q^p; N^n)\). 
Remember that \(\Pi\) is the nearest point projection onto the submanifold \(N^n\), which is smooth on a neighborhood \(O \subset \R^\nu\) of \(N^n\) and satisfies \(D\Pi\in L^{\infty}(O)\), see Section~\ref{section_nearest_point_projection}. 
Here, we also use the continuity of \(w\) on \(Q^p \setminus Q^p_\lambda\), which implies that for \(\varepsilon\) small, the set \(\big((1 - \theta) w + \theta \varphi_\varepsilon*w\big)( Q^p_{r_2} \setminus \overline{Q^p_{r_1}})\) is contained in a compact subset of \(O\).
Moreover, by Lemma~\ref{lemma_chain_rule_C1}, we have the estimate
\[
\int_{Q^p} \abs{Dv}^p
\le \C \bigg(\int_{Q^p} \abs{Dw}^p + \norm{D\theta}_{L^\infty(Q^p)} \int_{Q^p_{r_2} \setminus \overline{Q^p_{r_1}}} \, \abs{w - \varphi_\varepsilon * w}^p\bigg).
\]

Setting \(\theta = 1\) on \(\partial Q_r^p\) for some \(r_1 < r < r_2\), then \(v \in C^\infty(\partial Q_r^p; N^n)\).
Applying the trimming property to the map \(v\) on \(Q_r^p\), there exists a map \(\tilde v \in C^\infty(\overline{Q^p_r}; N^n)\) that coincides with \(v\) on \(\partial Q^p_r\) and is such that
\[
\int_{Q_r^p} \abs{D\tilde v}^p
\le \C \int_{Q_r^p} \abs{Dv}^p.
\]
Extending \(\tilde v\) as \(v\) on \(Q^p \setminus \overline{Q^p_r}\), we deduce from the estimates above that
\begin{multline*}
\int_{Q^p} \abs{D\tilde v}^p
\le \C \bigg(\int_{Q^p} \abs{Du}^p + (1 - \lambda)\int_{\partial Q^p} \abs{Du}^p \\
 + \norm{D\theta}_{L^\infty(Q^p)} \int_{Q^p_{r_2} \setminus \overline{Q^p_{r_1}}} \, \abs{w - \varphi_\varepsilon * w}^p\bigg).
\end{multline*}
To conclude the proof we may assume that \(\int_{Q^p} \abs{Du}^p > 0\). 
Choosing \(\lambda\) close to \(1\) and then \(\varepsilon > 0\) small, the second and third terms in the right-hand side can be controlled by \(\int_{Q^p} \abs{Du}^p\) and the direct implication follows.

To prove the converse implication, we take \(f \in C^\infty(\partial Q^p; N^n)\) having an extension \(u \in W^{1, p}(Q^p; N^n)\). By assumption, there exists a map \(v \in W^{1, p}(Q^p; N^n) \cap C^0(\overline{Q^p}; N^n)\) such that
\(v|_{\partial Q^p}=f\) and 
\[
\norm{Dv}_{L^p(Q^p)}
\le C' \norm{Du}_{L^p(Q^p)}.
\]
Once again, the idea of the proof is to smoothen \(v\) in two different ways. Far from the boundary, this is done by mollification and projection. Near the boundary, we work with a smooth extension of \(f\). 

More specifically, given \(0 < \lambda < 1\), we fix a smooth extension \(\tilde f \in C^\infty(\overline{Q^p} \setminus Q^p_\lambda; N^n)\) of \(f\).
Given \(0 < \lambda < r < \overline{r} < 1\), we take \(\theta \in C_c^\infty(\overline{Q^p} \setminus Q^p_r)\) such that \(\theta = 1\) in \(\overline{Q^p} \setminus Q^p_{\overline r}\).
We note that, for \(r\) close to \(1\) and for \(\varepsilon > 0\) small, the function \(\tilde{v} : Q^p \to N^n\) such that
\[
\tilde v(x)
= \begin{cases}
\tilde f(x) & \text{if \(x \in \overline{Q^p} \setminus Q^p_{\overline{r}} \),}\\
\Pi\bigl( (1 - \theta(x)) \varphi_\varepsilon * v(x) + \theta(x) \tilde f(x) \bigr)
& \text{if \(x \in Q^p_{\overline{r}} \setminus \overline{Q^p_{r}}\),}\\
\Pi(\varphi_\varepsilon * v(x))	& \text{if \(x \in \overline{Q^p_{r}}\),}
\end{cases}
\]
is well-defined and satisfies the estimate
\begin{multline*}
\int_{Q^p} \bigabs{D\tilde v}^p
\le \C \bigg(\int_{Q^p} \abs{Dv}^p +
\int_{Q^p \setminus \overline{Q^p_r}} \abs{D \tilde f}^p 
+ \norm{D\theta}_{L^\infty(Q^p)}^p\int_{Q^{p}_{\overline{r}} \setminus \overline{Q^p_r}} \abs{\varphi_{\varepsilon} * v - v}^{p}\\ + 
\norm{D\theta}_{L^\infty(Q^p)}^p \int_{Q^p \setminus \overline{Q^p_r}} \abs{v - \tilde f}^{p} \bigg).
\end{multline*}
Since \(v - \tilde f = 0\) on \(\partial Q^p\), it follows from the Poincaré inequality that
\[
\int_{Q^p \setminus \overline{Q^p_r}} \bigabs{v -\tilde f}^p
\le \C (1 - r)^p \int_{Q^p \setminus \overline{Q^p_r}} \bigabs{D(v -\tilde f)}^p.
\]
Taking \(r < \overline{r} < 1\) and \(\theta\) such that \((1-r)\norm{D\theta}_{L^\infty(Q^p)} \le C\), we get
\[
\int_{Q^p} \bigabs{D\tilde v}^p
\le \C \bigg(\int_{Q^p} \abs{Dv}^p +
\int_{Q^p \setminus \overline{Q^p_r}} \abs{D \tilde f}^p  + (1-r)^{-p}\int_{Q^{p}_{\overline{r}} \setminus \overline{Q^p_r}} \abs{\varphi_{\varepsilon} * v - v}^{p}\bigg).
\]
We now assume that \(\int_{Q^p} \abs{Du}^p > 0\).
The first integral in the right-hand side is by assumption estimated by \(\int_{Q^p} \abs{Du}^p\). 
Taking \(r\) close to \(1\) and then  \(\varepsilon > 0\) small the second and third integrals are also bounded by \(\int_{Q^p} \abs{Du}^p\), and the conclusion follows. 
\end{proof}

We prove the necessity part in Theorem~\ref{theorem_Ap_CNS}.

\begin{proposition}\label{proposition_thm_Ap_CN}
If \(p \in \{2, \dotsc, m\}\) and if the set \((W^{1,p}\cap L^{\infty})(Q^m ; N^n)\)  is  dense in \(W^{1,p}(Q^m ; N^n)\), then  \(N^n\) satisfies the trimming property of dimension \(p\).
\end{proposition}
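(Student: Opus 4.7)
By Proposition~\ref{proposition_continuous_trimming_property}, it suffices to produce, for every $u \in W^{1,p}(Q^p; N^n)$ whose trace $f := u|_{\partial Q^p}$ lies in $W^{1,p}(\partial Q^p; N^n)$, a continuous extension $v \in W^{1,p}(Q^p; N^n) \cap C^0(\overline{Q^p}; N^n)$ of $f$ with $\norm{Dv}_{L^p(Q^p)} \le C \norm{Du}_{L^p(Q^p)}$. The boundary datum $f$ is automatically continuous by the Morrey--Sobolev embedding, since $\dim \partial Q^p = p - 1 < p$.

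My plan is to cylindrically extend $u$ to the cube $Q^m$, exploit the density hypothesis there, pull back by slicing, smoothen, and finally correct the trace using a small Lions-type perturbation followed by the nearest-point projection. I would define $U : Q^m = Q^p \times Q^{m-p} \to N^n$ by $U(x',x'') := u(x')$, so that $U \in W^{1,p}(Q^m; N^n)$; invoke the density hypothesis to obtain $U_k \in (W^{1,p} \cap L^\infty)(Q^m; N^n)$ with $U_k \to U$ strongly in $W^{1,p}(Q^m)$; and use Fubini applied to $\norm{U_k - U}_{W^{1,p}(Q^p)}^p$ as an $L^1$ function of $x''$, together with extraction of a subsequence, to secure an $x'' \in Q^{m-p}$ for which $u_k := U_k(\cdot, x'')$ belongs to $(W^{1,p} \cap L^\infty)(Q^p; N^n)$ and converges strongly to $u$ in $W^{1,p}(Q^p; \R^\nu)$. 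Continuity of the trace operator on $Q^p$ and the Morrey--Sobolev embedding on $\partial Q^p$ then force $u_k|_{\partial Q^p} \to f$ in $W^{1-1/p,p}(\partial Q^p)$ and uniformly.

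Next, I would apply Proposition~\ref{proposition_smooth_bounded_density} (the critical-exponent density of smooth maps in the bounded Sobolev class, used on the $p$-dimensional cube $Q^p$) and diagonalize to produce smooth maps $v_k \in C^\infty(\overline{Q^p}; N^n)$ with $v_k \to u$ in $W^{1,p}(Q^p;\R^\nu)$ and $f_k := v_k|_{\partial Q^p} \to f$ uniformly and in $W^{1-1/p, p}(\partial Q^p)$. The differences $g_k := f_k - f \in C^\infty(\partial Q^p; \R^\nu)$ then satisfy $\norm{g_k}_{L^\infty} \to 0$ and $\norm{g_k}_{W^{1-1/p, p}} \to 0$. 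A Lions-type trace extension yields $w_k \in W^{1,p}(Q^p; \R^\nu) \cap C^0(\overline{Q^p}; \R^\nu)$ with $w_k|_{\partial Q^p} = g_k$, $\norm{w_k}_{L^\infty(Q^p)} \le C \norm{g_k}_{L^\infty(\partial Q^p)}$ and $\norm{Dw_k}_{L^p(Q^p)} \le C \norm{g_k}_{W^{1-1/p, p}(\partial Q^p)}$. For $k$ large, $v_k - w_k$ is continuous and takes values in a uniform tubular neighborhood of the compact set $v_k(\overline{Q^p}) \subset N^n$, so that $v := \Pi(v_k - w_k)$ is a continuous, $N^n$-valued extension with trace $\Pi(f_k - g_k) = \Pi(f) = f$ on $\partial Q^p$, and by Lemma~\ref{lemma_chain_rule_C1} together with the boundedness of $D\Pi$,
\[
\norm{Dv}_{L^p(Q^p)} \le C\bigl(\norm{Dv_k}_{L^p(Q^p)} + \norm{Dw_k}_{L^p(Q^p)}\bigr) \le C' \norm{Du}_{L^p(Q^p)}
\]
for $k$ sufficiently large.

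The main obstacle is the last step, where the projection $\Pi$ must be applied to a range lying in a tubular neighborhood of $N^n$ with uniform width, and the constant $C'$ must depend only on $N^n$. This is immediate when $u \in L^\infty(Q^p)$, since the $v_k$ can then be chosen with range in a fixed compact subset of $N^n$ on which the tubular width is bounded below and $\norm{D\Pi}_\infty$ is controlled by the global bound imposed in Section~\ref{section_nearest_point_projection}; for general $u \in W^{1,p}(Q^p; N^n)$ with possibly unbounded essential range, one first approximates $u$ in $W^{1,p}$ by bounded maps via the density hypothesis on $Q^m$ and a slicing argument analogous to the one above, applies the bounded case to each approximant, and diagonalizes to pass to the limit. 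A subsidiary delicate point is the simultaneous control of the slice and its trace, which is handled by further subsequence extractions to synchronize the convergences at a common $x''$.
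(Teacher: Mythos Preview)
Your reduction from \(Q^m\) to \(Q^p\) via a cylindrical extension and Fubini slicing is correct and matches the paper's treatment of the case \(p<m\). The essential gap is in the correction step on \(Q^p\). You assert that the traces \(f_k := v_k|_{\partial Q^p}\) converge \emph{uniformly} to \(f\) on \(\partial Q^p\); this is what drives \(\norm{g_k}_{L^\infty}\to 0\), hence \(\norm{w_k}_{L^\infty}\to 0\), hence the applicability of \(\Pi\) to \(v_k - w_k\). But strong convergence \(v_k\to u\) in \(W^{1,p}(Q^p)\) only yields trace convergence in \(W^{1-1/p,p}(\partial Q^p)\), and on the \((p-1)\)-dimensional boundary this is precisely the critical exponent: \((1-1/p)\cdot p = p-1\), so there is no Morrey--Sobolev embedding into \(C^0\). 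Uniform convergence of \(f_k\) to \(f\) simply does not follow, and without it your projection step has no reason to succeed: \(\norm{w_k}_{L^\infty}\) need not beat the tubular width around the (possibly spreading) range \(v_k(\overline{Q^p})\). Your final paragraph worries about the tubular width but not about this failure of uniform convergence, and the proposed reduction to the bounded case does not repair it.

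The paper avoids the trace operator entirely. It first replaces \(u\) by a map \(w\) that equals a zero-degree homogenization of \(u|_{\partial Q^p}\) on an outer shell \(Q^p\setminus Q^p_\lambda\); there \(w\) is continuous with range exactly the fixed compact set \(u(\partial Q^p)\). After approximating \(w\) by smooth maps \(u_k\) in \(W^{1,p}(Q^p)\), a Fubini argument in the radial variable produces some \(r\in(\lambda,1)\) for which a subsequence \(u_{k_j}|_{\partial Q^p_r}\) converges to \(w|_{\partial Q^p_r}\) in the \emph{full} space \(W^{1,p}(\partial Q^p_r)\) (not merely in the trace space), and now Morrey--Sobolev on the \((p-1)\)-dimensional slice does give uniform convergence. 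The blending and projection are then performed on the annulus \(Q^p\setminus Q^p_r\), where both \(u_{k_j}\) and \(w\) are uniformly close to the fixed compact \(u(\partial Q^p)\subset N^n\), so the projection \(\Pi\) is applied on a fixed compact subset of \(O\) with universal constants. Incorporating this radial-Fubini device in place of your trace-plus-Lions-extension step would close the gap in your argument.
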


\begin{proof}
We first consider the case \(p=m\). 
Let \(u \in W^{1, m}(Q^m; N^n)\) be such that \(u \in W^{1, m}(\partial Q^m; N^n)\). 
By the characterization given by Proposition~\ref{proposition_continuous_trimming_property}, it suffices to prove that there exists a map \(v \in W^{1, m}(Q^m; N^n) \cap C^0(\overline{Q^m}; N^n)\) such that \(u = v\) on \(\partial Q^m\) and such that \(\norm{Dv}_{L^m(Q^m)}\le C \norm{Du}_{L^m(Q^m)}\).
The idea of the proof is to rely on the density of bounded maps to replace \(u\) by a smooth approximation of \(u\) in the interior of the cube.
Such an approximation can be made uniform on the boundary of cubes \(\partial Q^{m}_r\) with \(r\) close to \(1\).{}
We can thus create a transition layer using a zero-degree homogenization of \(u\) and an averaging procedure, obtaining a function that is sufficiently close to \(N^{n}\).

More precisely, given \(0 < \lambda < 1\), we introduce the same map \(w\) as in the proof of Proposition~\ref{proposition_continuous_trimming_property}.
By assumption, the map \(w\) is the limit of a sequence of bounded maps in \(W^{1, m}(Q^m ; N^n)\). 
It follows from Proposition~\ref{proposition_smooth_bounded_density} that there exists a sequence \((u_k)_{k \in \N}\) in \(C^{\infty}(\overline{Q^m} ; N^n)\) converging strongly  to \(w\) in \(W^{1,m}(Q^m ; \R^{\nu})\). 
Then, for almost every \(\lambda < r < 1\), there exists a subsequence \((u_{k_{j}}|_{\partial Q^{m}_r})_{j \in \N}\) converging to \(w|_{\partial Q^m_r}\) in \(W^{1, m}(\partial Q^m_r; \R^{\nu})\). 
By the Morrey--Sobolev embedding, this convergence is also uniform on the set \(\partial Q^m_r\). 
Since \(w(\partial Q_r^m)\) is a compact subset of \(N^n\),  
there exist a compact set \(K\) in the domain \(O\) of the nearest point projection \(\Pi\) and an integer \(J \in \N\) such that for every \(j \geq J\), every \(z \in \partial Q^m_r\) and every \(t\in [0, 1]\), 
\[
tu_{k_{j}}(z)+(1-t)w(z) \in K. 
\]
We also introduce a cut-off function \(\theta\in C^{\infty}_c(Q^m)\) such that \(0\leq\theta\leq 1\) in \(Q^{m}\) and \(\theta=1\) in \(\overline{Q^m_{r}}\), with \((1-r)\|D\theta\|_{L^{\infty}(Q^m)} \leq C\).
For every \(j\geq J\), the map \(v_j : \overline{Q^m} \to N^n\) defined by
\[
v_j(x)
=
\begin{cases}
\Pi\bigl(\theta(x) u_{k_{j}}(r{x}/{\abs{x}_{\infty}}) + (1-\theta(x))w(r{x}/{\abs{x}_{\infty}}) \bigr)  
		& \text{if } x \in Q^{m} \setminus Q^m_r,\\
u_{k_{j}}(x) 	& \text{if } x \in Q^{m}_r,
\end{cases}
\]
is such that \(v_j \in W^{1, m}(Q^m ; N^n)\cap C^{0}(\overline{Q^m} ; N^n)\), \(v_j =u\) on \(\partial Q^m\), and
\begin{multline*}
\resetconstant
\int_{Q^m} \abs{Dv_j}^m
\leq \C \bigg( \int_{Q^m_r} \abs{Du_{k_{j}}}^m 
 + \norm{D\theta}_{L^{\infty}(Q^m)} (1 - r) \int_{\partial Q^{m}_{r}} \abs{u_{k_j} - w }^m \\
 + (1-r)\int_{\partial Q^m_r} (\abs{Du_{k_j}}^m + \abs{Dw}^m) \bigg).
\end{multline*}
Without loss of generality, we can assume that \(\int_{Q^m} \abs{Du}^m > 0\). 
We take \(j\geq J\) large enough so that
the second term in the right-hand side is bounded from above by \(\int_{Q^m} \abs{Du}^m \).
By  convergence of the sequence \((u_{k_j})_{j \in \N}\) we may also assume that 
\[
\int_{Q^m_r} \abs{Du_{k_j}}^m
\le 2 \int_{Q^m_r} \abs{Dw}^m
\quad \text{and} \quad
\int_{\partial Q^m_r} \abs{Du_{k_j}}^m
\le 2 \int_{\partial Q^m_r} \abs{Dw}^m.
\]
In view of the definition of \(w\) in terms of \(u\) we deduce from the estimates above that
\[
\int_{Q^m} \abs{Dv_j}^m  
\leq \C \bigg( \int_{Q^m} \abs{Du}^m + (1-\lambda) \int_{\partial Q^m} \abs{Du}^m \bigg).
\]
To conclude the case \(p = m\), it suffices to choose \(\lambda\) sufficiently close to \(1\) so that the second term in the right-hand side is bounded from above by \(\int_{Q^m} \abs{Du}^m\).

We now consider the case where \(p < m\). 
Under the assumption of the proposition, we claim that \((W^{1,p}\cap L^{\infty})(Q^p ; N^n)\) is also dense in \(W^{1,p}(Q^p ; N^n)\).
We are thus led to the first situation where \(p\) equals the dimension of the domain, and we conclude that the manifold \(N^n\) satisfies the trimming property of dimension \(p\).
It thus suffices to prove the claim.
For this purpose, take \(u\in W^{1,p}(Q^p; N^n)\) and define the function \(v : Q^m \to N^n\) for \(x=(x',x'')\in Q^p \times Q^{m-p} \) by
\[
v(x)=u(x').
\]
By assumption, there exists a sequence of maps \((v_k)_{k \in \N}\) in \((W^{1, p} \cap L^{\infty})(Q^m;N^n)\) converging to \(v\) in \(W^{1, p}(Q^m; N^n)\). 
Hence, there exist a subsequence \((v_{k_{j}})_{j \in \N}\) and \(a \in Q^{m-p}\) such that  \((v_{k_{j}}(\cdot, a))_{j \in \N}\) converges to \(u\) in \(W^{1, p}(Q^p; N^n)\).  
This proves the claim, and concludes the proof of the proposition.
\end{proof}

\begin{definition}\label{definition_bounded_geometry}
A Riemannian manifold  \(N^n\) has \emph{uniform Lipschitz geometry} (or  \(N^n\) is \emph{uniformly Lipschitz}) whenever there exist \(\kappa, \kappa' > 0\) and \(C > 0\) such that, for every \(\xi \in N^n\), 
\[{}
\norm{D\Psi}_{L^{\infty}(B_{N^n} (\xi; \kappa))} + \norm{D\Psi^{-1}}_{L^{\infty}(B_{\R^{n}}(\Psi(\xi); \kappa'))} \le C,
\]
for some local chart \(\Psi : B_{N^n} (\xi; \kappa) \to \R^n\) with \(B_{\R^{n}}(\Psi (\xi); \kappa') \subset \Psi (B_{N^n}(\xi; \kappa))\).
\end{definition}

Here, for \(\xi\in N^n\) and \(\kappa\geq 0\), we have denoted by \(B_{N^n}(\xi; \kappa)\) the geodesic ball in \(N^n\) of center \(\xi\) and radius \(\kappa\). 
A natural candidate for \(\Psi\) is the inverse of the exponential map when the manifold \(N^n\) has a positive global injectivity radius and the exponential and its inverse are uniformly Lipschitz maps on balls of a fixed radius.
If the injectivity radius of \(N^n\) is uniformly bounded from below and the Riemann curvature of \(N^n\) is uniformly bounded, then \(N^n\) has uniform Lipschitz geometry.
By relying on harmonic coordinates instead of the normal coordinates given by the exponential maps, it can be proved that it is sufficient to bound the Ricci curvature instead of the Riemann curvature \cite{Anderson-1990}.

\begin{proposition}\label{propositionboundedgeometry}
If \(N^n\) is uniformly Lipschitz, then  \(N^n\) satisfies the trimming property of any dimension \(p \in \N_{*}\).
\end{proposition}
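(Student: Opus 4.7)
By the equivalent characterization of the trimming property given in Proposition~\ref{proposition_continuous_trimming_property}, it suffices to show that for every \(u \in W^{1, p}(Q^p; N^n)\) whose boundary trace lies in \(W^{1, p}(\partial Q^p; N^n)\), there is a continuous map \(v \in W^{1, p}(Q^p; N^n) \cap C^0(\overline{Q^p}; N^n)\) with \(v = u\) on \(\partial Q^p\) and \(\norm{Dv}_{L^p(Q^p)} \le C \norm{Du}_{L^p(Q^p)}\) for some constant \(C = C(N^n)\). The Morrey--Sobolev embedding applied on each face of \(\partial Q^p\) (of dimension \(p - 1 < p\)) already guarantees that \(u|_{\partial Q^p}\) is continuous, so the matching of traces with a continuous \(v\) is meaningful from the outset.

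The strategy is to use the uniformly bi-Lipschitz charts provided by Definition~\ref{definition_bounded_geometry} to perform a Euclidean smoothing locally, with constants controlled only by \(N^n\). By absolute continuity of the Lebesgue integral, we choose a scale \(\eta > 0\) (depending on \(u\)) so that \(\int_\sigma \abs{Du}^p < \delta\) for every closed cube \(\sigma\) of inradius \(\eta\) in (a slight enlargement of) \(Q^p\), where \(\delta > 0\) is chosen small in terms of the uniform constants \(\kappa, \kappa', C_0\) of \(N^n\) and the local geometry of the embedding \(N^{n}\hookrightarrow\R^\nu\). Since \(p\) equals the dimension of \(\sigma\), the Poincar\'e--Wirtinger inequality at the critical exponent is scale invariant and bounds the essential oscillation of \(u\) on each such \(\sigma\) by \(C_1 \delta^{1/p}\); taking \(\delta\) small enough yields \(u(\sigma) \subset B_{N^n}(\xi_\sigma, \kappa/2)\) for some \(\xi_\sigma \in N^n\).

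We cover \(Q^p\) by a grid \(\{\sigma_i\}\) of such cubes. On each \(\sigma_i\), the map \(\Psi_{\xi_{\sigma_i}} \circ u\) belongs to \(W^{1, p}(\sigma_i; \R^n)\); we smooth it by the adaptive mollification of Proposition~\ref{lemmaConvolutionEstimates}, with a scale function \(\psi\) vanishing on \(\partial Q^p\) so as to preserve the boundary trace, and push the result forward by \(\Psi_{\xi_{\sigma_i}}^{-1}\). The mollified function keeps its range inside \(\Psi_{\xi_{\sigma_i}}(B_{N^n}(\xi_{\sigma_i}, \kappa))\) for a sufficiently fine mollification scale, since that range is a convex combination of values already in the chart. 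For the transitions between adjacent cubes, we glue with a partition of unity subordinate to the grid, performing the convex combination inside the chart of a reference cube; this is licit because the oscillation control ensures that the union of images of two overlapping cubes still fits in a common geodesic ball covered by a single chart.

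The energy estimate \(\norm{Dv}_{L^p(Q^p)} \le C \norm{Du}_{L^p(Q^p)}\) follows by summing the local estimates; crucially, each local estimate introduces only constants depending on \(m\), \(p\) and the uniform chart constants \(\kappa, \kappa', C_0\) of \(N^n\), so the final constant depends only on \(N^n\), not on the auxiliary scale \(\eta\) (which depended on \(u\)). I expect the main technical obstacle to be the careful organization of the gluing layer, so that the resulting map is simultaneously continuous, takes values in \(N^n\), agrees with \(u|_{\partial Q^p}\) as continuous maps on the boundary, and satisfies a uniform energy bound. The scale invariance of the critical Poincar\'e--Wirtinger inequality, together with the uniformity of the Lipschitz chart constants across \(N^n\), is the decisive feature that makes the final constant independent of \(u\).
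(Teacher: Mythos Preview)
Your proposal has a genuine gap at the step where you invoke the Poincar\'e--Wirtinger inequality. At the critical exponent \(p = \dim\sigma\), the inequality controls the \emph{mean} oscillation of \(u\) on \(\sigma\), not the essential (\(L^\infty\)) oscillation; a map in \(W^{1,p}(\sigma;\R^\nu)\) with arbitrarily small energy can still be unbounded (think of a \(\log\log\) profile). Consequently the inclusion \(u(\sigma)\subset B_{N^n}(\xi_\sigma,\kappa/2)\) need not hold, and the composition \(\Psi_{\xi_\sigma}\circ u\) on which the rest of your construction rests is simply not defined. This is not a technicality you can patch by shrinking \(\delta\) or \(\eta\): the whole point of the trimming property is to replace a possibly unbounded \(u\) by a bounded one, so you cannot assume at the outset that \(u\) already ranges in a chart domain on each cube.

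The paper avoids this trap by shifting the smallness hypothesis from the interior to the boundary. Lemma~\ref{lemmalocaltrimming} reduces the trimming property to the case where \(\norm{Du}_{L^p(\partial Q^p)}\) is small; since \(\partial Q^p\) is \((p-1)\)-dimensional, the Morrey--Sobolev embedding legitimately gives \(u(\partial Q^p)\subset B_{N^n}(\xi;\kappa'')\). One then transports only the boundary values through a single chart, extends inside \(Q^p\) by the ordinary Euclidean Sobolev extension, truncates to stay in the chart, and pulls back. The reduction lemma itself is not free: it uses the opening technique on a fine cubication to force the rescaled boundary energies of the small cubes below the threshold \(\alpha\), and a further mollification/projection step to upgrade the resulting bounded map to a continuous one. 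Your scale-invariance intuition is correct and is indeed what makes the final constant independent of \(u\), but it must be applied on \((p-1)\)-dimensional faces, not on \(p\)-dimensional cubes.
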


The proof of Proposition~\ref{propositionboundedgeometry} is based on the following lemma that reduces the problem to a trimming property for maps with small energy on the boundary \(\norm{Du}_{L^{p}(\partial Q^p)}\).
By the Morrey--Sobolev embedding, the range of \(u|_{\partial Q^{p}}\) is then contained on a small geodesic ball, and one can perform the extension in a suitable local chart for manifolds having uniform Lipschitz geometry.

\begin{lemma} \label{lemmalocaltrimming}
Let \(p \in \N_{*}\) and \(\alpha > 0\). 
Assume that for every map \(u \in W^{1, p}(Q^p; N^n)\) satisfying
\(u|_{\partial Q^p}\in W^{1,p}(\partial Q^p ; N^n)\) and 
\[{}
\norm{Du}_{L^{p}(Q^p)} + \norm{Du}_{L^{p}(\partial Q^p)} \leq \alpha,{}
\] 
there exists \(v\in (W^{1, p}\cap L^{\infty})(Q^p;N^n)\) such that
\(v= u\) on \(\partial Q^p\) and 
\[
\norm{Dv}_{L^{p}(Q^p)} \leq C'' \big(\norm{Du}_{L^{p}(Q^p)} + \norm{Du}_{L^{p}(\partial Q^p)} \big) 
\]
for some constant \(C''>0\) independent \(u\). 
Then,
\(N^n\) satisfies the trimming property of dimension \(p\).
\end{lemma}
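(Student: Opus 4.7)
The plan is to reduce the general trimming property to the localized hypothesis of the lemma by a cubic subdivision of \(Q^p\), applying the hypothesis on cells whose rescaled boundary energy falls below the threshold \(\alpha\), and treating the remaining cells separately. By Proposition~\ref{proposition_continuous_trimming_property}, it suffices to exhibit, for an arbitrary \(u \in W^{1,p}(Q^p;N^n)\) with \(u|_{\partial Q^p} \in W^{1,p}(\partial Q^p;N^n)\), a map \(v \in W^{1,p}(Q^p;N^n) \cap C^0(\overline{Q^p};N^n)\) coinciding with \(u\) on \(\partial Q^p\) and satisfying \(\norm{Dv}_{L^p(Q^p)} \leq C \norm{Du}_{L^p(Q^p)}\).

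First I would record that the Morrey--Sobolev embedding applies on \(\partial Q^p\) (which has dimension \(p-1 < p\)), so \(u|_{\partial Q^p}\) is continuous and valued in a compact set \(K \subset N^n\). Next I would cubicate \(Q^p\) into cubes \(\sigma_k\) of inradius \(\eta>0\), selecting via a Fubini-type averaging over translates of the grid a shift such that \(u|_{\partial \sigma_k} \in W^{1,p}(\partial \sigma_k;N^n)\) for every \(k\) and
\[
\sum_k \norm{Du}_{L^p(\partial \sigma_k)}^p \;\leq\; \frac{C}{\eta}\,\norm{Du}_{L^p(Q^p)}^p + \norm{Du}_{L^p(\partial Q^p)}^p.
\]
I would call \(\sigma_k\) \emph{good} when \(\eta^{1/p}\,\norm{Du}_{L^p(\partial\sigma_k)} \leq \alpha\) and \emph{bad} otherwise. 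On each good cube, the scale-invariant form of the hypothesis produces a bounded extension \(v_k\) with \(v_k = u\) on \(\partial \sigma_k\) and
\[
\norm{Dv_k}_{L^p(\sigma_k)}^p \;\leq\; C\bigl(\norm{Du}_{L^p(\sigma_k)}^p + \eta\,\norm{Du}_{L^p(\partial \sigma_k)}^p\bigr),
\]
while the number of bad cubes is bounded uniformly in \(\eta\) in terms of \(\norm{Du}_{L^p(Q^p)}\), \(\norm{Du}_{L^p(\partial Q^p)}\), and \(\alpha\).

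The hard step is the treatment of bad cubes. For each bad \(\sigma_k\) I would use Fubini within \(\sigma_k\) to select a concentric, slightly smaller cube \(\widetilde\sigma_k\subset\sigma_k\) for which the rescaled hypothesis applies, producing a bounded extension \(\widetilde v_k\) on \(\widetilde\sigma_k\). On the thin annulus \(\sigma_k\setminus\widetilde\sigma_k\) I would interpolate between the continuous outer trace \(u|_{\partial\sigma_k}\) and the trace of \(\widetilde v_k\) on \(\partial\widetilde\sigma_k\) through a convex combination in \(\R^\nu\) followed by the nearest-point projection \(\Pi\) of Section~\ref{section_nearest_point_projection}; thinness of the annulus together with continuity of the outer trace forces the interpolant to lie in the tubular neighborhood of \(N^n\) on which \(\Pi\) is smooth. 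Gluing all the pieces yields a map \(v\) that matches on common internal faces, belongs to \(W^{1,p}(Q^p;N^n)\cap C^0(\overline{Q^p};N^n)\), satisfies \(v=u\) on \(\partial Q^p\), and obeys
\[
\norm{Dv}_{L^p(Q^p)}^p \;\leq\; C\,\norm{Du}_{L^p(Q^p)}^p + C\,\eta\,\norm{Du}_{L^p(\partial Q^p)}^p,
\]
from which the desired bound follows by choosing \(\eta\) small enough. The principal obstacle is precisely this annular construction: one has to coordinate the choice of \(\widetilde\sigma_k\) with the scale at which the hypothesis is invoked in order to guarantee uniform proximity between the inner and outer traces, so that the projection \(\Pi\) is applicable and the resulting \(W^{1,p}\) energy on the annulus is controlled by \(\norm{Du}_{L^p(\sigma_k)}^p\).
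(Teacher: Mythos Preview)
Your good–bad dichotomy is the natural first thought, but the treatment of bad cubes is a genuine gap, and it cannot be closed along the lines you sketch. Two things go wrong. First, the annulus \(\sigma_k\setminus\widetilde\sigma_k\) need not be thin: by Fubini you can indeed find some concentric \(\partial\widetilde\sigma_k\) on which the rescaled boundary energy drops below \(\alpha\), but when \(\norm{Du}_{L^p(\sigma_k)}\) is large you may be forced to take \(\widetilde\sigma_k\) much smaller than \(\sigma_k\). Second, and more seriously, the interpolation ``convex combination in \(\R^\nu\) followed by \(\Pi\)'' requires the outer trace \(u|_{\partial\sigma_k}\) and the inner trace \(u|_{\partial\widetilde\sigma_k}\) to be uniformly close in \(L^\infty\); nothing in your setup provides this, since \(u\) is not continuous in the interior of \(\sigma_k\). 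Without that proximity the segment \([u(x_{\mathrm{outer}}),u(x_{\mathrm{inner}})]\) can leave the tubular neighborhood \(O\), and the projection step collapses. Note also that zero-degree homogenization from \(\partial\widetilde\sigma_k\) outward is unavailable here precisely because the face dimension equals \(p\).

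The paper avoids bad cells altogether, and the mechanism is the \emph{opening technique} (Proposition~\ref{openingpropGeneral}). One first replaces \(u\) by the map \(w\) of Proposition~\ref{proposition_continuous_trimming_property} (zero-degree extension of \(u|_{\partial Q^p}\) on a collar, \(u\) rescaled inside), choosing the collar width so that \(\norm{Dw}_{L^p(Q^p)}\le C\norm{Du}_{L^p(Q^p)}\). One then takes a cubication \(\mathcal{K}^p_\mu\) sitting strictly inside \(Q^p\) and \emph{opens} \(w\) around its \((p-1)\)-skeleton. The opened map is locally \((p-1)\)-dimensional near each face, which forces
\[
\mu^{1/p}\norm{Dw^{\mathrm{op}}}_{L^p(\partial\sigma^p)}\le C\,\norm{Dw}_{L^p(\sigma^p+Q^p_{2\rho\mu})}
\]
for every cell \(\sigma^p\). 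Now equi-integrability of \(\abs{Dw}^p\) lets one choose \(\mu\) so small that the right-hand side is \(\le\alpha\) for \emph{every} \(\sigma^p\); the localized hypothesis then applies on each cell with no exceptions, and the pieces glue to a bounded \(W^{1,p}\) map on \(K^p_\mu\). Outside \(K^p_\mu\) one keeps \(w\), which is already bounded there by construction. A final convolution–projection upgrades boundedness to continuity. The point is that opening plus equi-integrability replaces your good/bad split by a choice of scale at which all cells are good; this is the missing idea in your argument.
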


\begin{proof}[Proof of Lemma~\ref{lemmalocaltrimming}]
Let \(u\in W^{1,p}( Q^p; N^n)\) be a map such that \(u|_{\partial Q^p} \in W^{1,p}(\partial Q^p; N^n)\).  
The idea of the proof is to subdivide the domain \(Q^{p}\) into smaller cubes, and to apply the opening technique on each cube.
The resulting map also has small (rescaled) Sobolev energy on the boundaries of the small cubes, and so we can locally apply the small-energy trimming property to obtain a bounded extension of \(u|_{\partial Q^{p}}\).{}
We then use an approximation argument by convolution to get a continuous extension of \(u|_{\partial Q^{p}}\).
 
More precisely, for \(\frac{1}{2} < \lambda < 1\), we introduce the map \(w\) as in the proof of Proposition~\ref{proposition_continuous_trimming_property}. 
Then, \(w|_{\partial Q^p}= u|_{\partial Q^p}\), \(w\) is bounded on \(Q^p\setminus Q^{p}_{\lambda}\) 
and 
\[{}
\resetconstant
 \norm{Dw}_{L^{p}(Q^p)} 
 \leq \Cl{cte-0604} \bigl( \norm{Du}_{L^{p}(Q^p)}+(1-\lambda)^{\frac{1}{p}} \norm{Du}_{L^{p}(\partial Q^p)} \bigr).
\]
Without loss of generality, we can assume that \(\norm{Du}_{L^{p}(Q^p)}>0\). We take \(\lambda>0\) such that
\[
(1-\lambda)^{\frac{1}{p}} \norm{Du}_{L^{p}(\partial Q^p)}\leq \norm{Du}_{L^{p}(Q^p)}.
\]
This implies
\begin{equation}\label{eq470}
\norm{Dw}_{L^{p}(Q^p)} \leq 
2\Cr{cte-0604} \norm{Du}_{L^{p}(Q^p)}.
\end{equation}

We fix \(0 < \rho < \frac{1}{2}\). 
For every  \(0 < \mu < 1\) sufficiently small, we consider a cubication \(\cK^{p}_\mu\) of inradius \(\mu\) such that
\[
Q^{p}_{\lambda+2\rho\mu}\subset K^{p}_\mu \subset K^{p}_\mu + Q^{p}_{2\rho\mu}\subset Q^p. 
\]
We open the map \(w\) around \(\cK^{p-1}_\mu\). 
More precisely, denoting by \(\Phi^\mathrm{op} : \R^m \to \R^m\) the smooth map
 given by Proposition~\ref{openingpropGeneral} above, we consider
\[
w^\mathrm{op} = w \circ \Phi^\mathrm{op}.
\]
In particular, \(w^\mathrm{op} \in W^{1, p}(Q^p; N^n)\), \(w^\mathrm{op} = w\) outside \(K^{p-1}_\mu+Q^{p}_{2\rho\mu}\) 
and, for every \(\sigma^p \in \cK^{p}_\mu\), we have
\[
\norm{D w^\mathrm{op}}_{L^{p}(\partial \sigma^p+Q^{p}_{2\rho\mu})} 
\leq \C  \norm{D w}_{L^{p}(\sigma^p+Q^{p}_{2\rho\mu})}.
\]
This implies that \(w^\mathrm{op}|_{\partial Q^p} = u|_{\partial Q^p}\) and, for every \(\sigma^p \in \cK_{\mu}^p\),
\begin{equation}
\label{eqTrimmingLocalcell}
 \norm{D w^\mathrm{op}}_{L^{p}(\sigma^p + Q^{p}_{2\rho\mu})}
 \leq \Cl{cte-1598} \norm{Dw}_{L^{p}(\sigma^p+Q^{p}_{2\rho\mu})} .
\end{equation}
Raising both sides to the power \(p\) and summing over all \(\sigma^p \in \cK^p\), we also get
\begin{equation*}
\label{eqTrimmingLocal}
 \norm{D w^\mathrm{op}}_{L^{p}(Q^p)}
 \leq \C \norm{Dw}_{L^{p}(Q^p)} .
\end{equation*}

We  also  need the fact that the opening construction preserves the ranges of the maps. More precisely, for every \(\sigma^{p-1}\in \cK_{\mu}^{p-1}\), we have
\[
w^\mathrm{op}(\sigma^{p-1} + Q^{p}_{2\rho\mu}) \subset w(\sigma^{p-1} + Q^{p}_{2\rho\mu}).
\]
We apply this remark to every \(\sigma^{p-1}\subset \partial K^p_\mu\) to get
\[
 w^\mathrm{op}(\partial K^p_\mu + Q^{p}_{2\rho\mu}) \subset w(\partial K^{p}_\mu + Q^{p}_{2\rho\mu}).
\]
Together with the fact that  \(w\) is bounded on \(Q^p\setminus Q^{p}_{\lambda}\supset \partial K_{\mu}^p + Q^{p}_{2\rho\mu}\), 
this proves that \(w^\mathrm{op} \) is bounded on \(Q^p\setminus  K^{p}_\mu\).

Since \( w^\mathrm{op}\) is \((p-1)\)-dimensional on \(\partial \sigma^p+Q^{p}_{\rho\mu}\) for every \(\sigma^p \in \cK^{p}_\mu\), we have
\begin{equation}
\label{eqlemmatrimming1}
\norm{D w^\mathrm{op}}_{L^{p}(\partial \sigma^p)} 
\leq \frac{\C}{\mu^{\frac{1}{p}}} \norm{D w^\mathrm{op}}_{L^{p}(\partial \sigma^p+Q^{p}_{2\rho\mu})} \leq  \frac{\Cl{cte-1455}}{\mu^{\frac{1}{p}}} \norm{D w}_{L^{p}( \sigma^p+Q^{p}_{2\rho\mu})}.
\end{equation}
We take \(\mu>0\) such that, for every \(\sigma^p\in \cK^{p}_\mu\), we have
\[
(\Cr{cte-1598} + \Cr{cte-1455}) \norm{D w}_{L^{p}( \sigma^p+Q^{p}_{2\rho\mu})} \leq \alpha;
\]
this is possible by equi-integrability of the summable function \(\abs{Dw}^{p}\).
Then, by estimates~\eqref{eqTrimmingLocalcell} and \eqref{eqlemmatrimming1} we have
\[
\norm{D w^\mathrm{op}}_{L^{p}(\sigma^p)} + \mu^{\frac{1}{p}}\norm{D w^\mathrm{op}}_{L^{p}(\partial \sigma^p)} \leq \alpha.
\]
By the small-energy trimming assumption applied to \(w^{\mathrm{op}}|_{\sigma^p}\) for every \(\sigma^p \in \cK^p_{\mu}\) and by a scalling argument, there exists a map \(w_{\sigma^p}\in (W^{1,p}\cap L^{\infty})(\sigma^p ; N^n)\) which agrees with \(w^\mathrm{op}\) on \(\partial \sigma^p\) and is such that 
\begin{equation}
\label{eqlemmatrimming2}
\norm{Dw_{\sigma^p}}_{L^{p}(\sigma^p)} 
\leq C'' \big(\norm{Dw^{\mathrm{op}}}_{L^{p}(\sigma^p)} 
+ \mu^{\frac{1}{p}}\norm{Dw^{\mathrm{op}}}_{L^{p}(\partial \sigma^p)} \big). 
\end{equation}

We then define the map \(\widetilde{w}\) by 
\[
\widetilde{w}(x)=w_{\sigma^p}(x) \text{ when } x\in \sigma^p \text{ and } \sigma^p \in \cK^{p}_\mu 
\]
and we extend \(\widetilde{w}\) by  \( w^\mathrm{op}\) outside \(K^{p}_\mu\). 
Then, \(\widetilde{w}\in (W^{1,p}\cap L^{\infty})(Q^p ; N^n)\) and \(\widetilde{w}|_{\partial Q^p}=u|_{\partial Q^p}\).{}
By additivity of the integral and by estimates \eqref{eqlemmatrimming1} and \eqref{eqlemmatrimming2}, we also have
\[
\begin{split}
 \norm{D\widetilde{w}}_{L^{p}(Q^p)}^p & = \sum_{\sigma^p\in \cK^{p}_\mu}\norm{Dw_{\sigma^p}}_{L^{p}(\sigma^p)}^p 
 + \norm{Dw^\mathrm{op}}_{L^{p}(Q^p\setminus K^{p}_\mu)}^p\\
& \leq \sum_{\sigma^p\in \cK^{p}_\mu} 2^{p-1}(C'')^{p} \bigl( \norm{Dw^{\mathrm{op}}}_{L^{p}(\sigma^p)}^p 
+ \mu\norm{Dw^{\mathrm{op}}}_{L^{p}(\partial \sigma^p)}^p \bigr) \\
&\hspace{5cm}+ \norm{Dw^\mathrm{op}}_{L^{p}(Q^p\setminus K^{p}_\mu)}^p\\
& \leq \Cl{cte-0605} \norm{Dw}_{L^{p}(Q^p)}^p.
\end{split}
\]
Applying estimate \eqref{eq470}, we deduce that
\begin{equation}
	\label{eqLocalTrimming-1}
	 \norm{D\widetilde{w}}_{L^{p}(Q^p)}
	\le 2\Cr{cte-0604}(\Cr{cte-0605})^{\frac{1}{p}} \norm{Du}_{L^{p}(Q^p)}.
\end{equation}

The map \(\widetilde{w}\) is continuous on  \(Q^p\setminus (K^{p}_\mu+Q^{p}_{2\rho\mu})\) since it agrees with the map \(w\) there. We introduce a cut-off function \(\theta\in C^{\infty}_c(Q^p)\) such that \(0 \le \theta \le 1\) in \(Q^{m}\) and \(\theta = 1\) on a neighborhood of \(K^{p}_\mu+Q^{p}_{2\rho\mu}\).{}
Given a family of mollifiers \((\varphi_{\varepsilon})_{\varepsilon>0}\), 
as a consequence of the VMO-property of \(\widetilde{w}\) in the critical-integrability case (see the proof of Proposition~\ref{proposition_smooth_bounded_density}) the Poincaré-Wirtinger inequality implies that there exists \(\overline{\varepsilon}>0\) such that, for every \(0<\varepsilon \leq \overline{\varepsilon}\), the set \((\varphi_{\varepsilon}*\widetilde{w})(\supp \theta)\) is contained in the neighborhood \(O\) where the nearest point projection \(\Pi\) is defined.
Since \(\widetilde{w}\) is continuous on \(Q^p\setminus (K^{p}_\mu+Q^{p}_{2\rho\mu})\), we can define
\[
v=\Pi\bigl(\theta (\varphi_{\varepsilon} * \widetilde{w})+ (1-\theta)\widetilde{w}\bigr)
\] 
for \(\varepsilon\) sufficiently small. 
This map \(v\) is an  extension of \(u|_{\partial Q^{p}}\) in the space \(W^{1,p}(Q^p ; N^n)\cap C^0(\overline{Q^p}; N^n)\). By the same calculation as in the proof of Proposition~\ref{proposition_continuous_trimming_property}, one has the estimate
\begin{equation}
	\label{eqLocalTrimming-2}
\norm{Dv}_{L^p(Q^p)}\leq \Cl{cte-0608} \norm{D \widetilde{w}}_{L^{p}(Q^p)}
\end{equation}
for \(\varepsilon\) small enough.
By estimates~\eqref{eqLocalTrimming-1} and~\eqref{eqLocalTrimming-2}, we have constructed a continuous extension of \(u|_{\partial Q^{p}}\) such that
\[{}
\norm{Dv}_{L^p(Q^p)}
\le 2 \Cr{cte-0604}(\Cr{cte-0605})^{\frac{1}{p}}\Cr{cte-0608}\norm{Du}_{L^{p}(Q^p)}.
\]
Proposition~\ref{proposition_continuous_trimming_property} now yields the conclusion.
\end{proof}

We now apply Lemma~\ref{lemmalocaltrimming} to prove that manifolds with uniform Lipschitz geometry satisfy the trimming property.
Another application of such a lemma in connection with the problem of weak sequential density of bounded Sobolev maps is investigated in~\cite{Bousquet-Ponce-VanSchaftingen-2017}.

\begin{proof}[Proof of Proposition~\ref{propositionboundedgeometry}]
Let \(u\in W^{1,p}(Q^p;N^n)\) be such that \(u|_{\partial Q^p}\in W^{1,p}(\partial Q^p ; N^n)\). 
Take \(0 < \kappa'' < \min{\{\kappa, \kappa'/C\}}\), where throughout the proof we refer to the notation of Definition~\ref{definition_bounded_geometry}.
By the Morrey--Sobolev embedding, there exists \(\alpha>0\) such that if \(\norm{Du}_{L^p(\partial Q^p)}\leq \alpha\), then there exists \(x \in \partial Q^p\) such that, for almost every \(y \in \partial Q^p\), we have
\[{}
\dist_{N^{n}}(u(y), u(x)) \le \kappa''.{}
\] 
Given a local chart \(\Psi\) on \(B_{N^n} (u (x); \kappa)\) as in Definition~\ref{definition_bounded_geometry},
the function \(\Psi \circ u\) belongs to \(W^{1, p}(\partial Q^{p}; \R^{n})\).
By the classical extension property of Sobolev functions, there exists \(w\in W^{1,p}(Q^{p} ; \R^n)\) such that \(w = \Psi\circ u\) on \(\partial Q^{p}\) in the sense of traces, and the following estimate holds 
\[{}
\resetconstant
\int_{Q^p} \abs{D w}^p 
\le \Cl{cte-0850} \int_{\partial Q^p}\abs{D(\Psi \circ u)}^p 
\le \Cr{cte-0850}C^{p} \int_{\partial Q^p}\abs{Du}^p.
\]

Observe that \(\Psi\circ u(\partial Q^p)\subset B_{\R^{n}}(\Psi (u(x)); \kappa')\).{}
Indeed, by the mean value inequality and the choice of \(\kappa''\), for almost every \(y \in \partial Q^p\) we have
\[{}
\dist_{\R^{n}}{(\Psi(u(y)), \Psi(u(x)))}
\le C \dist_{N^{n}}(u(y), u(x)) 
\le C \kappa''{}
< \kappa'.
\]
Thus, truncating \(w\) with a retraction on the ball \(B_{\R^{n}}(\Psi (u(x)); \kappa')\) if necessary, we may further assume that the image of the extension \(w\) satisfies \(w (Q^p) \subset B_{\R^{n}}(\Psi (u(x)); \kappa')\), since this does not modify the values of \(\Psi \circ u|_{\partial Q^p}\).
Defining the map \(v = \Psi^{-1} \circ w\), by composition of Sobolev maps with smooth functions it follows that \(v\in W^{1,p}(Q^p; N^{n})\) 
and
\[
\int_{Q^p}\abs{Dv}^p 
\le C^{p} \int_{Q^p} \abs{D w}^p 
\leq \C \int_{\partial Q^p} \abs{Du}^p.
\]
In view of Lemma~\ref{lemmalocaltrimming}, the proof is complete.
\end{proof}


\section{Proofs of Theorems~\ref{theoremMainNonInteger} and~\ref{theorem_Ap_CNS}}
\label{sectionProofs}

Let \(1\leq p\leq m\) and \(u \in W^{1, p}(Q^m; N^n)\).{}
We begin by extending \(u\) in a neighborhood of \(Q^{m}\) and then by taking a cubication that contains \(Q^{m}\). 
More precisely, by using reflexions across the boundary of \(Q^{m}\), we can extend \(u\) as a map in \(W^{1, p}(Q_{1 + 2\gamma}^m; N^n)\) for some \(\gamma > 0\). 
We also fix  \(0 < \rho < \frac{1}{2}\). 
Let \(\cK^m_\eta\) be a cubication of \(Q_{1+\gamma}^m\) of inradius \(0 < \eta \le \gamma\) such that
\[
2\rho\eta \le \gamma.
\]
For almost every  \(x, y\in Q^m_{1 + 2 \gamma}\), the function \(t\mapsto u(tx+(1-t)y)\) is an absolutely continuous path in \(N^n\) between \(u(x)\) and \(u(y)\). Hence, the geodesic distance \(\dist_{N^{n}}{(u(x),u(y))}\) between \(u(x)\) and \(u(y)\) can be estimated as follows:
\[
\dist_{N^{n}}{(u(x),u(y))}\leq \int_{0}^{1}\abs{Du(tx+(1-t)y)[x-y]}\dif t.
\]
As in the proof of the Poincar\'e-Wirtinger inequality, this implies that
\[{}
\resetconstant
\int_{Q^m_{1 + 2 \gamma}} \int_{Q^m_{1 + 2 \gamma}} \dist_{N^n}(u(x), u(y)) \dif x \dif y
\leq \C \int_{Q^{m}_{1+2\gamma}}\abs{Du}.
\]
It follows that, for almost every \(y\in Q^{m}_{1+2\gamma}\), 
\[
\int_{Q^m_{1 + 2 \gamma}} \dist_{N^n}(u(x), u(y)) \dif x <\infty.
\]
This implies that, for every \(a\in N^n\), the function \(x\mapsto \dist_{N^n}(u(x), a)\) is summable on \(Q^m_{1 + 2 \gamma}\).

\newcommand{\commentary}[1]{\textit{#1}}

\commentary{
We now distinguish the cubes in the cubication \(\cK^m_\eta\) in terms of \emph{good cubes} and\/ \emph{bad cubes}.
In a good cube, most of the values of the function \(u\) lie in a geodesic ball centered at some fixed point \(a \in N^n\), and \(u\) does not oscillate too much;
the latter is quantified in terms of the rescaled \(L^{p}\) norm of \(Du\).
}

We  fix a point \(a\in N^n\).
For every \(R>0\) and \(\lambda >0\),
we define the subskeleton \(\cG^m_\eta\) of \(\cK^{m}_\eta\) as the set of good cubes \(\sigma^m \in \cK^{m}_\eta\) in the sense that
\[
\fint_{\sigma^{m}+Q^{m}_{2\rho\eta}}\dist_{N^{n}}{(u(x),a)} \dif x \le R \quad \text{and} \quad
\frac{1}{\eta^{\frac{m-p}{p}}} \norm{Du}_{L^{p}(\sigma^m+Q^{m}_{2\rho\eta})} \le \lambda.
\]
We also introduce the subskeleton of bad cubes \(\cE^{m}_\eta\) defined as the complement 
of \(\cG^{m}_\eta\)  in \(\cK^{m}_\eta\).
Thus, by definition of \(\cE^m_{\eta}\), for every \(\sigma^m \in \cE^m_{\eta}\) we have
\[
R <  \fint_{\sigma^m + Q_{2\rho\eta}^m} \dist_{N^n}{(u(x), a)} \dif x
\quad
\text{or}
\quad
\lambda < \frac{1}{\eta^{\frac{m-p}{p}}} \norm{Du}_{L^{p}(\sigma^m+Q^{m}_{2\rho\eta})}.
\]
In the proof, we do not explicitly indicate the dependence of \(\cG^{m}_{\eta}\) and \(\cE^{m}_{\eta}\) on the parameters \(R\) and \(\lambda\).

\commentary{
On the bad cubes \(\cE^{m}_{\eta}\), we wish to replace the function \(u\) by some nicer, bounded, function.
To reach this goal, we would like to use the values of \(u\) on the lower-dimensional skeleton \(\cE^{\ell}_{\eta}\), where \(\ell=\floor{p}\), assuming that  \(u \in W^{1, p}(E^{\ell}_{\eta}; N^{n})\).{}
Indeed, for \(p\) noninteger we have \(p > \ell\) and then the Morrey--Sobolev embedding implies that \(u\) is continuous on \(E^{\ell}_{\eta}\).{}
We can thus propagate the values of \(u\) inside \(E^{m}_{\eta}\) by zero-degree homogenization.{}
When \(p\) is integer, we have \(p = \ell\) and we cannot rely on the Morrey--Sobolev embedding.
In this case, we apply the trimming property of dimension \(p\) to modify the function \(u\) on \(E^{\ell}_{\eta}\), keeping its values on the lower-dimensional skeleton \(E^{\ell - 1}_{\eta}\).{}
}

We first quantify the total volume of bad cubes.
More precisely, the Lebesgue measure of the set \(E^m_{\eta} + Q^m_{2\rho\eta}\) can be made as small as we want by a suitable choice of parameters \(R\) and \(\eta\).
This is a consequence of the following estimate:

\begin{claim}\label{claim_measure_bad}
The Lebesgue measure of the set \(E^m_{\eta} + Q^m_{2\rho\eta}\) satisfies
\[
	\bigabs{E^m_{\eta} + Q^m_{2\rho\eta}} 
	\le C \bigg( \frac{1}{R} \int_{Q^m_{1 + 2 \gamma}} \dist_{N^n}(u(x), a) \dif x + \frac{\eta^p}{\lambda^p} \int_{Q^m_{1 + 2 \gamma}} \abs{Du}^p \bigg).
\]
\end{claim}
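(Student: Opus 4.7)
My plan is to split the family $\cE^m_\eta$ of bad cubes into two subfamilies according to which defining inequality is violated, and then to control the measure contributed by each one via a Chebyshev-type argument.  Set $\cE^m_{\eta,1}$ to be the cubes $\sigma^m \in \cK^m_\eta$ for which
\[
R < \fint\limits_{\sigma^m + Q^m_{2\rho\eta}} \dist_{N^n}(u(x), a) \dif x,
\]
and $\cE^m_{\eta,2}$ to be those for which
\[
\lambda < \frac{1}{\eta^{\frac{m-p}{p}}} \norm{Du}_{L^p(\sigma^m + Q^m_{2\rho\eta})}.
\]
By definition $\cE^m_\eta = \cE^m_{\eta,1} \cup \cE^m_{\eta,2}$, and by subadditivity $\bigabs{E^m_\eta + Q^m_{2\rho\eta}} \le \sum_{\sigma^m \in \cE^m_\eta} \bigabs{\sigma^m + Q^m_{2\rho\eta}}$, so it suffices to bound the two partial sums separately.

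For the first subfamily, I would clear denominators in the defining inequality to get $R\bigabs{\sigma^m + Q^m_{2\rho\eta}} \le \int_{\sigma^m + Q^m_{2\rho\eta}} \dist_{N^n}(u(x),a) \dif x$, sum over $\sigma^m \in \cE^m_{\eta,1}$, and collapse the right-hand side to a single integral over $Q^m_{1+2\gamma}$ using the bounded-overlap property of the enlarged cubes.  This yields a contribution bounded by $\frac{C}{R}\int_{Q^m_{1+2\gamma}} \dist_{N^n}(u(x),a) \dif x$, which matches the first term in the claim.

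For the second subfamily, I would raise the defining inequality to the $p$-th power and use $\bigabs{\sigma^m + Q^m_{2\rho\eta}} \le C(m,\rho)\eta^m$ (since $\sigma^m + Q^m_{2\rho\eta}$ is a cube of inradius $\eta(1+2\rho)$) to rewrite it as
\[
\bigabs{\sigma^m + Q^m_{2\rho\eta}} \le C\frac{\eta^p}{\lambda^p} \int\limits_{\sigma^m + Q^m_{2\rho\eta}} \abs{Du}^p.
\]
Summing over $\sigma^m \in \cE^m_{\eta,2}$ and again invoking bounded overlap produces the second term $\frac{C'\eta^p}{\lambda^p}\int_{Q^m_{1+2\gamma}} \abs{Du}^p$.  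Adding the two contributions finishes the proof.

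The only point that demands any care is the bounded-overlap statement: each point of $\R^m$ must lie in at most a purely dimensional constant number of neighborhoods $\sigma^m + Q^m_{2\rho\eta}$, $\sigma^m \in \cK^m_\eta$.  This is immediate from the fact that centers of cubes in $\cK^m_\eta$ form a lattice of spacing $2\eta$ while each enlarged cube has inradius $\eta(1+2\rho) < 2\eta$ (using $\rho < \tfrac{1}{2}$), so the overlap number is bounded by $3^m$.  There is no real obstacle here; the argument is a standard lattice-counting observation.
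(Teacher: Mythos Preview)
Your proof is correct and follows essentially the same approach as the paper's: both arguments bound \(\abs{E^m_\eta + Q^m_{2\rho\eta}}\) by \(\sum_{\sigma^m \in \cE^m_\eta}\abs{\sigma^m + Q^m_{2\rho\eta}}\), invoke the defining inequalities of bad cubes in Chebyshev fashion, and use the bounded overlap of the enlarged cubes to collapse the sum to an integral over \(Q^m_{1+2\gamma}\). The only cosmetic difference is that the paper treats both failure modes in a single sum (bounding \(\#\cE^m_\eta\) and then multiplying by \(\eta^m\)), whereas you split \(\cE^m_\eta\) into the two subfamilies \(\cE^m_{\eta,1}\) and \(\cE^m_{\eta,2}\) and handle each separately; the content is the same.
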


\begin{proofclaim}
By finite subadditivity of the Lebesgue measure, we have 
\[
\bigabs{E^m_{\eta} + Q^m_{2\rho\eta}} 
\le \sum_{\sigma^{m} \in \cE^{m}_{\eta}} \bigabs{\sigma^m + Q^m_{2\rho\eta}}
 \le \C \eta^{m} \, (\#\cE^m_{\eta}). 
\]
From the definition of \(\cE^{m}_{\eta}\), we estimate the number \(\#\cE^m_{\eta} \) of bad cubes as follows:
\[
\begin{split}
\#\cE^m_{\eta} 
& \le  \sum_{\sigma^m \in \cE^m_{\eta}} \biggl(\frac{1}{|\sigma^m+Q^{m}_{2\rho\eta}| R} \int_{\sigma^m + Q_{2\rho\eta}^m} \dist_{N^n}(u(x), a) \dif x \\
 &\hspace{15em}+ \frac{1}{\eta^{m-p} \lambda^{p}} \int_{\sigma^m + Q_{2\rho\eta}^m} \abs{Du}^p \biggr)\\
& \le 
\frac{\C}{\eta^m}\bigg( \frac{1}{R} \int_{Q^m_{1 + 2 \gamma}} \dist_{N^n}(u(x), a) \dif x 
+ \frac{\eta^p}{\lambda^p} \int_{Q^m_{1 + 2 \gamma}}  \abs{Du}^p \bigg).
\end{split}
\]
Combining both estimates, we get the conclusion.
\end{proofclaim}

\commentary{
Since the cubication \(\cK^m_\eta\) is prescribed independently of \(u\), the \(L^{p}\) norm of \(Du\) on the skeleton \(E^{\ell}_{\eta}\) could be very large.{}
We thus begin by opening \(u\) in a neighborhood of \(E^{\ell}_{\eta}\), which provides a new function \(u^{\mathrm{op}}_{\eta}\) depending on, at most, \(\ell\) components around each face of \(E^{\ell}_{\eta}\).
}

Throughout the proof, we denote by 
\[{}
\ell=\floor{p}
\]
the integer part of \(p\).
We begin by opening the map \(u\) in a neighborhood of \(E^{\ell}_\eta\) if \(p<m\) and in a neighborhood of \(E^{m-1}_{\eta}\) if \(p=m\). 
More precisely, if \(\Phi^\mathrm{op} : \R^m \to \R^m\) is the smooth map given by Proposition~\ref{openingpropGeneral} with the parameter \(\rho\), we consider the opened map
\[
u^\mathrm{op}_\eta = u \circ \Phi^\mathrm{op}.
\]
When \(p<m\), we have that \(u^\mathrm{op}_\eta \in W^{1, p}(Q^m_{1+2\gamma}; N^n)\) and \(u^\mathrm{op}_\eta = u\) in 
the complement of \(E^{\ell}_\eta + Q^m_{2\rho\eta}\). 
Moreover, there exists \(C>0\) such that, for every \(\sigma^\ell \in \cE^{\ell}_\eta\), we have
\begin{equation}\label{eq1561}
\norm{D u^\mathrm{op}_\eta}_{L^p(\sigma^\ell +Q^{m}_{2\rho\eta})} 
\le C \norm{D u}_{L^p(\sigma^{\ell} + Q^m_{2\rho\eta})},
\end{equation}
and also
\begin{equation}
\label{inequalityMainOpening-weak-pni}
\norm{D u^\mathrm{op}_\eta - D u}_{L^p(Q^m_{1+2\gamma})} 
\le C \norm{D u}_{L^p(E^{\ell}_\eta + Q^m_{2\rho\eta})}.
\end{equation}
When \(p=m\), the integer \(\ell\) must be replaced by \(m-1\) in the above estimates.

\commentary{
We now consider a convolution of the opened map \(u_{\eta}^{\mathrm{op}}\).
The convolution parameter is not constant: it is small on the good cubes and quickly becomes zero as we enter the bad cubes.
Such a transition is made in a region having width of order \(\eta\).
}

More precisely, given \(0<\underline{\rho}<\rho\), we  consider a smooth function \(\psi_\eta \in C^\infty(Q^m_{1+2\gamma})\) such that 
\begin{enumerate}[$(a)$]
\item\label{2141} \(0 \leq  \psi_\eta < (\rho-\underline{\rho}) \eta\),
\item\label{2142} \(\psi_{\eta}=t\eta\) on \(G^{m}_{\eta}\), for some parameter \(0 < t < \rho-\underline{\rho}\) ,
\item\label{2143} \(\supp{\psi_{\eta}} \subset G^{m}_{\eta}+Q^{m}_{\underline{\rho}\eta}\) ,
\item\label{2144} \(\norm{D\psi_\eta}_{L^\infty(Q^m_{1+2\gamma})} <1\),
\end{enumerate}
The parameter \(t\) is fixed throughout the proof and is independent of \(\eta\), \(R\) and \(\lambda\).{}
Condition~$(\ref{2142})$ gives an upper bound on \(t\), while Condition~$(\ref{2144})$ imposes \(t\) to be typically smaller than \(\underline{\rho}\) and this can be achieved independently of the geometry of the cubication \(\cG_{\eta}\).

Given a mollifier \(\varphi \in C_c^\infty(B_1^m)\),
for every \(x\in Q^{m}_{1+\gamma}\) let
\[
u^\mathrm{sm}_\eta(x) 
= (\varphi_{\psi_\eta} \ast u^\mathrm{op}_\eta)(x)
= \int_{B_1^m} u^\mathrm{op}_\eta \bigl(x-\psi_{\eta}(x)y\bigr)\varphi(y) \dif y .
\]
Since \(0 < \psi_\eta \le \rho\eta < \gamma\), the smoothened map \(u^\mathrm{sm}_\eta : Q_{1 + \gamma}^m \to \R^\nu\) is well-defined.

\begin{claim}
\label{claimEstimateDirectDistance}
	The map \(u_{\eta}^{\mathrm{sm}}\) satisfies the estimates%
\begin{gather}
\label{eq1590}
\norm{u^\mathrm{sm}_\eta - u}_{L^p(Q^{m}_{1+\gamma})}
 \leq \sup_{v \in B_1^m}{\norm{\tau_{\psi_\eta v}(u) -  u}_{L^p(Q^{m}_{1+\gamma})}} + C \norm{u^\mathrm{op}_\eta - u}_{L^p(Q^{m}_{1+2\gamma})},
\\
\label{eq1594}
\begin{multlined}
	\norm{D u^\mathrm{sm}_\eta - D u}_{L^p(Q^{m}_{1+\gamma})}
 \leq \sup_{v \in B_1^m}{\norm{\tau_{\psi_\eta v}(Du) - D u}_{L^p(Q^{m}_{1+\gamma})}} \\
 + C \norm{D u}_{L^p(E^m_\eta + Q^m_{2\rho\eta})}.
\end{multlined}
\end{gather}
\end{claim}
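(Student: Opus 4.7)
The plan is to apply Proposition~\ref{lemmaConvolutionEstimates} to the opened map \(u^{\mathrm{op}}_\eta\) in place of \(u\), and then convert the resulting estimates (which naturally involve \(u^{\mathrm{op}}_\eta\)) into estimates involving \(u\) itself via the triangle inequality and a change of variables. A key observation is that the map \(\Phi_v(x) = x - \psi_\eta(x) v\) with \(v \in B^m_1\) is a smooth diffeomorphism whose Jacobian is bounded above and below by constants depending only on \(\|D\psi_\eta\|_{L^\infty} < 1\); combined with \(\psi_\eta \le \rho\eta \le \gamma/2\), a change of variables yields, for any measurable \(F\),
\[
\norm{F \circ \Phi_v}_{L^p(Q^m_{1+\gamma})} \le C \norm{F}_{L^p(Q^m_{1+2\gamma})}.
\]

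For the first estimate, I would write \(\norm{u^{\mathrm{sm}}_\eta - u}_{L^p(Q^m_{1+\gamma})} \le \norm{u^{\mathrm{sm}}_\eta - u^{\mathrm{op}}_\eta}_{L^p(Q^m_{1+\gamma})} + \norm{u^{\mathrm{op}}_\eta - u}_{L^p(Q^m_{1+\gamma})}\) and apply the \(L^p\) bound of Proposition~\ref{lemmaConvolutionEstimates} to \(u^{\mathrm{op}}_\eta\), which controls the first summand by \(\sup_v \norm{\tau_{\psi_\eta v}(u^{\mathrm{op}}_\eta) - u^{\mathrm{op}}_\eta}_{L^p(Q^m_{1+\gamma})}\). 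Then the decomposition
\[
\tau_{\psi_\eta v}(u^{\mathrm{op}}_\eta) - u^{\mathrm{op}}_\eta
= (u^{\mathrm{op}}_\eta - u) \circ \Phi_v + \bigl(\tau_{\psi_\eta v}(u) - u\bigr) + (u - u^{\mathrm{op}}_\eta)
\]
together with the change of variables above reduces everything to \(\sup_v \norm{\tau_{\psi_\eta v}(u) - u}_{L^p(Q^m_{1+\gamma})}\) and to a multiple of \(\norm{u^{\mathrm{op}}_\eta - u}_{L^p(Q^m_{1+2\gamma})}\), as desired.

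For the second estimate I would proceed analogously, starting from the derivative inequality of Proposition~\ref{lemmaConvolutionEstimates}:
\[
\norm{D u^{\mathrm{sm}}_\eta - D u^{\mathrm{op}}_\eta}_{L^p(Q^m_{1+\gamma})} \le \sup_{v \in B_1^m} \norm{\tau_{\psi_\eta v}(Du^{\mathrm{op}}_\eta) - Du^{\mathrm{op}}_\eta}_{L^p(Q^m_{1+\gamma})} + C \norm{D u^{\mathrm{op}}_\eta}_{L^p(A)},
\]
with \(A = \bigcup_{x \in Q^m_{1+\gamma} \cap \supp D\psi_\eta} B^m_{\psi_\eta(x)}(x)\). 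Properties~\((\ref{2141})\)--\((\ref{2144})\) of \(\psi_\eta\) imply that \(D\psi_\eta\) vanishes both on \(G^m_\eta\) and outside \(G^m_\eta + Q^m_{\underline\rho\eta}\), whence \(\supp D\psi_\eta\) is contained in a strip of width \(\underline\rho\eta\) around \(\partial E^m_\eta\) that lies in \(E^m_\eta + Q^m_{\underline\rho\eta}\); since \(\psi_\eta < (\rho - \underline\rho)\eta\), this yields \(A \subset E^m_\eta + Q^m_{\rho\eta}\). Combining this with the fact that \(u^{\mathrm{op}}_\eta = u\) outside \(E^\ell_\eta + Q^m_{2\rho\eta}\) (or \(E^{m-1}_\eta + Q^m_{2\rho\eta}\) when \(p = m\)) and with estimate~\eqref{eq1561} summed over the finitely overlapping faces of \(\cE^\ell_\eta\), one obtains \(\norm{Du^{\mathrm{op}}_\eta}_{L^p(A)} \le C \norm{Du}_{L^p(E^m_\eta + Q^m_{2\rho\eta})}\). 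The supremum term is handled exactly as in the first part, by writing
\[
\tau_{\psi_\eta v}(D u^{\mathrm{op}}_\eta) - D u^{\mathrm{op}}_\eta
= (D u^{\mathrm{op}}_\eta - D u) \circ \Phi_v + \bigl(\tau_{\psi_\eta v}(D u) - Du\bigr) + (Du - Du^{\mathrm{op}}_\eta),
\]
applying the change-of-variables bound, and absorbing the remaining \(\norm{Du^{\mathrm{op}}_\eta - Du}_{L^p}\) into \(C \norm{Du}_{L^p(E^m_\eta + Q^m_{2\rho\eta})}\) via~\eqref{inequalityMainOpening-weak-pni}. A last triangle inequality \(\norm{D u^{\mathrm{sm}}_\eta - Du}_{L^p} \le \norm{Du^{\mathrm{sm}}_\eta - Du^{\mathrm{op}}_\eta}_{L^p} + \norm{Du^{\mathrm{op}}_\eta - Du}_{L^p}\) produces the announced inequality~\eqref{eq1594}.

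The main obstacle I anticipate is the bookkeeping: carefully locating the supports of \(u^{\mathrm{op}}_\eta - u\), \(Du^{\mathrm{op}}_\eta - Du\), and \(D\psi_\eta\), and then tracking the inflation of domains introduced by the nonconstant translation \(\Phi_v\), so as to make sure that all error terms are either genuine translation errors of \(u\) (resp.\ \(Du\)) or can be absorbed into the single quantity \(\norm{u^{\mathrm{op}}_\eta - u}_{L^p(Q^m_{1+2\gamma})}\) (resp.\ \(\norm{Du}_{L^p(E^m_\eta + Q^m_{2\rho\eta})}\)) with a constant depending only on \(m\), \(p\), \(\rho\), and the fixed bound \(\|D\psi_\eta\|_{L^\infty} < 1\).
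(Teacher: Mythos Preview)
Your proposal is correct and follows essentially the same route as the paper: apply Proposition~\ref{lemmaConvolutionEstimates} to \(u^{\mathrm{op}}_\eta\), split \(\tau_{\psi_\eta v}(u^{\mathrm{op}}_\eta)-u^{\mathrm{op}}_\eta\) (and its derivative analogue) via the triangle inequality into a genuine translation error of \(u\) plus two copies of \(u^{\mathrm{op}}_\eta-u\), and control the latter by the change of variables \(x\mapsto x-\psi_\eta(x)v\). The only cosmetic difference is that the paper locates \(\supp D\psi_\eta \cap Q^m_{1+\gamma}\) inside \(E^m_\eta\) using property~(\ref{2142}) alone (since \(Q^m_{1+\gamma}\setminus G^m_\eta\subset E^m_\eta\)), which is a shade simpler than your strip argument, and then invokes Proposition~\ref{openingpropGeneral} directly for the bound on \(\norm{Du^{\mathrm{op}}_\eta}_{L^p(A)}\).
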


\begin{proofclaim}
By Proposition~\ref{lemmaConvolutionEstimates} with \(\omega = Q^m_{1+\gamma}\), we have
\[
\norm{u^\mathrm{sm}_\eta - u^\mathrm{op}_\eta}_{L^p(Q^{m}_{1+\gamma})}
\leq \sup_{v \in B_1^m}{\norm{\tau_{\psi_\eta v}(u^\mathrm{op}_\eta) - u^\mathrm{op}_\eta}_{L^p(Q^{m}_{1+\gamma})}}. 
\]
We also observe that, for every \(v \in B_{1}^{m}\), we have
\begin{align*}
\norm{\tau_{\psi_\eta v}(u^\mathrm{op}_\eta) &- u^\mathrm{op}_\eta}_{L^p(Q^{m}_{1+\gamma})} \\
& \leq \norm{\tau_{\psi_\eta v}(u^\mathrm{op}_\eta) - \tau_{\psi_\eta v}(u)}_{L^p(Q^{m}_{1+\gamma})}\\
& \qquad + \norm{\tau_{\psi_\eta v}(u) - u}_{L^p(Q^{m}_{1+\gamma})} +\norm{u^\mathrm{op}_\eta - u}_{L^p(Q^{m}_{1+\gamma})}\\ 
& \le  \norm{\tau_{\psi_\eta v}(u) - u}_{L^p(Q^{m}_{1+\gamma})} + C\norm{u^\mathrm{op}_\eta - u}_{L^p(Q^{m}_{1+2\gamma})},
\end{align*}
and this proves \eqref{eq1590}.

We now consider the second estimate.
Since \(\norm{D\psi_\eta}_{L^\infty(Q^m_{1+2\gamma})} <1\), it also follows from Proposition~\ref{lemmaConvolutionEstimates} that
\begin{multline}
\label{inequalityMainSmoothing-weak-pni}
\resetconstant
\norm{D u^\mathrm{sm}_\eta - D u^\mathrm{op}_\eta}_{L^p(Q^{m}_{1+\gamma})}\\
\leq \sup_{v \in B_1^m}{\norm{\tau_{\psi_\eta v}(Du^\mathrm{op}_\eta) - Du^\mathrm{op}_\eta}_{L^p(Q^{m}_{1+\gamma})}} 
+ \C  \norm{D u^\mathrm{op}_\eta}_{L^p(A)},
\end{multline}
where
\(
A = \bigcup\limits_{x \in Q^{m}_{1+\gamma} \cap \supp{D\psi_\eta}}B_{\psi_\eta(x)}^m(x).
\)
From Property~$(\ref{2142})$, 
we have
\[
\supp{D\psi_\eta} \cap Q^{m}_{1+\gamma} \subset Q^{m}_{1+\gamma}\setminus G^{m}_\eta \subset E^{m}_{\eta}
\]
and, since \(\psi_\eta \le \rho\eta\), we deduce that \(A \subset E^m_\eta + Q^m_{\rho\eta}\). 
By Proposition~\ref{openingpropGeneral}, we then get
\begin{equation}
\label{inequalityOpeningSmallSet-weak-pni}
\norm{D u^\mathrm{op}_\eta}_{L^p(A)} 
\le \C \norm{Du}_{L^p(E^m_\eta + Q^m_{2\rho\eta})}.
\end{equation}
As in the proof of the first estimate, for every \(v \in B_{1}^{m}\) we also have 
\begin{multline}
\norm{\tau_{\psi_\eta v}(Du^\mathrm{op}_\eta) - Du^\mathrm{op}_\eta}_{L^p(Q^{m}_{1+\gamma})}
\leq  \norm{\tau_{\psi_\eta v}(Du) - Du}_{L^p(Q^{m}_{1+\gamma})}\\ 
+ C \norm{Du^\mathrm{op}_\eta - Du}_{L^p(Q^{m}_{1+2\gamma})}. \label{eq1202-pni}
\end{multline}
Combining estimates \eqref{inequalityMainSmoothing-weak-pni}--\eqref{eq1202-pni} and \eqref{inequalityMainOpening-weak-pni}, we complete the proof of \eqref{eq1594}.
\end{proofclaim}

\commentary{
Although the smoothened map \(u^{\textrm{sm}}_{\eta}\) need not lie on the manifold \(N^{n}\), we now quantify how far the set \(u^{\mathrm{sm}}_\eta(G^{m}_\eta)\) is with respect to some large geodesic ball \(B_{N^{n}}(a; \overline{R})\) with \(\overline R > R\).{}
Since there are many points of \(u(G^{m}_{\eta})\) on the geodesic ball \(B_{N^{n}}(a; {R})\), we can apply the Poincaré-Wirtinger inequality to establish such an estimate.
By choosing the parameter \(\lambda\) sufficiently small, we will later on be able to project back \(u^{\mathrm{sm}}_{\eta}\) to \(N^{n}\), at least on the good cubes \(\cG^{m}_{\eta}\).{}
}

\begin{claim}
\label{claimDistance}
There exists \(\overline{R} > R\) such that, for every \(\eta > 0\) and \(\lambda > 0\),
the directed Hausdorff distance to the geodesic ball \(B_{N^n}(a; \overline{R})\) satisfies
\[
\Dist_{B_{N^n}(a; \overline{R})}(u^{\mathrm{sm}}_\eta(G^{m}_\eta))
 \leq \frac{C'}{\eta^{\frac{m-p}{p}}}\max_{\sigma^m \in \cG^{m}_\eta} \norm{Du}_{L^{p}(\sigma^m+Q^{m}_{2\rho\eta})},
\]
for some constant \(C'>0\) depending on \(m\) and \(p\).
\end{claim}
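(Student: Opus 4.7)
The plan is to choose, for each $x \in G^m_\eta$, a well-selected point $y_1 \in \Omega := \sigma^m + Q^m_{2\rho\eta}$ (where $\sigma^m\in\cG^m_\eta$ contains $x$) such that $u^\mathrm{op}_\eta(y_1)$ lies in a fixed geodesic ball $B_{N^n}(a;\overline R)$ and simultaneously approximates $u^\mathrm{sm}_\eta(x)$ up to the claimed error. Since properties~$(\ref{2141})$ and~$(\ref{2142})$ give $\psi_\eta(x) = t\eta < \rho\eta$ on $G^m_\eta$, the averaging ball $B^m_{\psi_\eta(x)}(x)$ lies in $\Omega$; hence a direct estimate on the convolution defining $u^\mathrm{sm}_\eta$ yields, for every $c \in \R^\nu$,
\[
|u^\mathrm{sm}_\eta(x) - c|
\le \|\varphi\|_\infty \fint_{B^m_{\psi_\eta(x)}(x)}|u^\mathrm{op}_\eta - c|
\le C \Bigl( \fint_\Omega |u^\mathrm{op}_\eta - c|^p \Bigr)^{1/p},
\]
with $|\Omega|/|B^m_{\psi_\eta(x)}(x)|$ controlled by $m$, $\rho$, $t$.

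To find $y_1$ with small Poincaré oscillation, I would apply the two-point Poincaré--Wirtinger inequality on the cube $\Omega$,
\[
\fint_\Omega \fint_\Omega |u^\mathrm{op}_\eta(y) - u^\mathrm{op}_\eta(z)|^p \dif y \dif z
\le C \eta^p \fint_\Omega |Du^\mathrm{op}_\eta|^p,
\]
and use Markov's inequality on the inner integral to produce a set $A_2\subset\Omega$ of measure at least $3|\Omega|/4$ on which $\fint_\Omega |u^\mathrm{op}_\eta - u^\mathrm{op}_\eta(z)|^p \le 4C\eta^p\fint_\Omega|Du^\mathrm{op}_\eta|^p$. For the geodesic-ball requirement, the Fubini-based averaging underlying Proposition~\ref{openingpropGeneral}, applied to the nonnegative function $\dist_{N^n}(u(\cdot),a)$ in place of $|Du|^p$, provides the $L^1$-type opening inequality
\[
\int_\Omega \dist_{N^n}(u^\mathrm{op}_\eta, a)
\le C \int_\Omega \dist_{N^n}(u, a)
\le C R |\Omega|,
\]
the last step being the good-cube condition on $\sigma^m$. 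Markov again yields $A_1 := \{z\in\Omega : \dist_{N^n}(u^\mathrm{op}_\eta(z), a) \le \overline R\}$ of measure $\ge 3|\Omega|/4$ once $\overline R := 4CR$, so $A_1 \cap A_2 \neq \emptyset$ and one can pick $y_1 \in A_1 \cap A_2$.

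Substituting $c = u^\mathrm{op}_\eta(y_1) \in N^n$ in the first display and invoking $y_1\in A_2$ gives
\[
|u^\mathrm{sm}_\eta(x) - u^\mathrm{op}_\eta(y_1)|
\le C \eta \Bigl( \fint_\Omega |Du^\mathrm{op}_\eta|^p \Bigr)^{1/p}
\le \frac{C}{\eta^{(m-p)/p}} \|Du^\mathrm{op}_\eta\|_{L^p(\Omega)}.
\]
Decomposing $\Omega$ along the $\ell$-faces of $\sigma^m$ (whose $2\rho\eta$-neighborhoods lie in $\Omega$ since $\rho<1/2$), combining Proposition~\ref{openingpropGeneral}$(\ref{itemgenopeningprop6})$ on each face-neighborhood with the identity $u^\mathrm{op}_\eta = u$ on $\Omega \setminus (E^\ell_\eta + Q^m_{2\rho\eta})$ yields $\|Du^\mathrm{op}_\eta\|_{L^p(\Omega)} \le C \|Du\|_{L^p(\Omega)}$. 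Since $y_1\in A_1$, the left-hand side bounds $\dist_{\R^\nu}(u^\mathrm{sm}_\eta(x), B_{N^n}(a;\overline R))$, and taking the supremum over $x \in G^m_\eta$ produces the claim.

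The main obstacle is the $L^1$-opening inequality above: it is not explicitly stated in the excerpt, but is obtained by the same Fubini-based averaging as the gradient estimate in Proposition~\ref{openingpropGeneral}, contingent on verifying that the opening map $\Phi^\mathrm{op}$ leaves $\Omega$ invariant. This invariance follows from the fact that $\Phi^\mathrm{op}$ preserves the tangent coordinates to each $\ell$-face $\tau^\ell$ meeting $\Omega$ while confining the orthogonal coordinates to the $2\rho\eta$-neighborhood of $\tau^\ell$; since $\rho < 1/2$, a case analysis on the relative position of $\tau^\ell$ and $\sigma^m$ shows that the image stays within $\sigma^m + Q^m_{2\rho\eta}$.
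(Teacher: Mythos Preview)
Your overall plan---find a point in \(\Omega = \sigma^m + Q^m_{2\rho\eta}\) where \(u^{\mathrm{op}}_\eta\) lies in a fixed geodesic ball and then invoke Poincar\'e--Wirtinger---matches the paper's. The gap is the \(L^1\)-opening inequality
\[
\int_\Omega \dist_{N^n}(u^{\mathrm{op}}_\eta, a) \le C \int_\Omega \dist_{N^n}(u, a).
\]
This does \emph{not} follow from Proposition~\ref{openingpropGeneral} nor from the domain invariance \(\Phi^{\mathrm{op}}(\Omega)\subset\Omega\) you argue at the end. The Fubini averaging in the opening construction produces the bound \(\int f\circ\Phi_z \le C\int f\) only for a \emph{generic} shift \(z\); the specific \(\Phi^{\mathrm{op}}\) of Proposition~\ref{openingpropGeneral} is selected to make this hold for \(f = |Du|^p\), and there is no reason the same \(z\) works for \(f = \dist_{N^n}(u,a)\). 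Domain invariance alone cannot help: \(\Phi^{\mathrm{op}}\) is constant on \((m-i)\)-dimensional cubes orthogonal to each \(i\)-face, so preimages have positive measure and no change-of-variables estimate applies. One could salvage your route by revisiting the proof of Proposition~\ref{openingpropGeneral} and choosing the shift to control both \(|Du|^p\) and \(\dist_{N^n}(u,a)\) simultaneously, but this means rebuilding \(\Phi^{\mathrm{op}}\).

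The paper avoids this entirely. Rather than estimate \(\int_\Omega \dist_{N^n}(u^{\mathrm{op}}_\eta,a)\), it uses that \(u^{\mathrm{op}}_\eta = u\) on the core \((\sigma^m + Q^m_{2\rho\eta}) \setminus (\partial\sigma^m + Q^m_{2\rho\eta})\), which has measure comparable to \(|\Omega|\). Chebyshev applied to \(u\) (not \(u^{\mathrm{op}}_\eta\)) via the good-cube hypothesis gives \(|\{z\in\Omega : \dist_{N^n}(u(z),a)\ge\overline R\}| \le (R/\overline R)|\Omega|\); the set where \(u^{\mathrm{op}}_\eta\) can differ from \(u\) is absorbed by choosing \(\overline R\) large enough that the core still contains a set \(W^{\mathrm{op}}_{\overline R}\) of measure at least \(c\eta^m\) on which \(u^{\mathrm{op}}_\eta \in B_{N^n}(a;\overline R)\). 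The rest of your argument (averaging over \(W^{\mathrm{op}}_{\overline R}\), Poincar\'e--Wirtinger, and the face-by-face opening estimate \(\|Du^{\mathrm{op}}_\eta\|_{L^p(\Omega)}\le C\|Du\|_{L^p(\Omega)}\)) then goes through exactly as you wrote it.
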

Here, the directed Hausdorff distance from a set \(S \subset \R^\nu\) to the geodesic ball \(B_{N^n}(a; \overline{R})\) is defined as
\[
\Dist_{B_{N^n}(a; \overline{R})}{(S)} = \sup{\Big\{ \dist_{\R^{\nu}}{\bigl(x, B_{N^n}(a; \overline{R})\bigr)} \st x \in S \Big\}},
\]
where \(\dist_{\R^{\nu}}\) denotes the Euclidean distance in \(\R^{\nu}\).

\begin{proofclaim}[Proof of the claim]
Given \(\sigma^m \in \cG_{\eta}^m\) and \(\overline{R} > 0\), we consider the sets
\begin{align*}
W_{\overline R}
& = \Big\{z \in \sigma^m + Q^m_{2 \rho \eta} \st \dist_{N^n}(u(z), a) < \overline{R} \Big\},\\
Z_{\overline R}
& = \Big\{z \in \sigma^m + Q^m_{2 \rho \eta} \st \dist_{N^n}(u(z), a) \ge \overline{R} \Big\},\intertext{and their counterparts for the map \(u^\mathrm{op}_\eta\) obtained by the opening construction,}
W_{\overline R}^\mathrm{op}
& = \Big\{z \in \sigma^m + Q^m_{2 \rho \eta} \st \dist_{N^n}(u^\mathrm{op}_\eta(z), a) < \overline{R} \Big\},\\
Z_{\overline R}^\mathrm{op}
& = \Big\{z \in \sigma^m + Q^m_{2 \rho \eta} \st \dist_{N^n}(u^\mathrm{op}_\eta(z), a) \ge \overline{R} \Big\}.
\end{align*}
Observe that by definition \(u^\mathrm{op}_\eta(z) \in B_{N^n}(a; \overline{R})\) for every \(z \in W_{\overline R}^\mathrm{op}\).
Assuming that \(\abs{W_{\overline R}^\mathrm{op}} > 0\), then for every \(x \in \sigma^m\) we may estimate 
the distance from \(u^\mathrm{sm}_\eta(x)\) to \(B_{N^n}(a; \overline{R})\) in terms of an average integral as follows
\[
\dist_{\R^{\nu}}{\bigl(u^\mathrm{sm}_\eta(x), B_{N^n}(a; \overline{R})\bigr)} 
\le \fint_{W_{\overline R}^\mathrm{op}}  \abs{u^\mathrm{sm}_\eta(x) - u^\mathrm{op}_\eta(z)} \dif z.
\]
For every \(x \in \sigma^m \), we then have
\begin{equation*}
\resetconstant
\dist_{\R^{\nu}}{\bigl(u^\mathrm{sm}_\eta(x), B_{N^n}(a; \overline{R})\bigr)} 
\le 
\norm{\varphi}_{L^{\infty}(B_{1}^{m})} \fint_{W_{\overline R}^\mathrm{op}} \fint_{B_{\psi_\eta(x)}^m(x)}\abs{u^\mathrm{op}_\eta(y)-u^\mathrm{op}_\eta(z)}\dif y\dif z,
\end{equation*}
where \(\varphi\) is the mollifier used in the definition of \(u^\mathrm{sm}_\eta\).
Since both sets \(W_{\overline R}^\mathrm{op}\) and \(B_{\psi_\eta(x)}^m(x)\) are contained in \(\sigma^m + Q^m_{2 \rho \eta}\),
 by the Poincar\'e--Wirtinger inequality we deduce that
\begin{equation}
\label{eq_1971}
\dist_{\R^{\nu}}{(u^\mathrm{sm}_\eta(x), B_{N^n}(a; \overline{R}) )} 
\le 
\frac{\C  \eta^{2m}}{\abs{W_{\overline R}^\mathrm{op}}\,
\abs{B_{\psi_\eta(x)}^m(x)}}\frac{1}{\eta^{\frac{m-p}{p}}}\norm{Du^\mathrm{op}_\eta}_{L^p(\sigma^m + Q^m_{2 \rho \eta})}.
\end{equation}
Since \(\psi_\eta= t\eta\) on \(G^{m}_\eta\), for every \(x \in \sigma^m\) we have
\begin{equation}\label{eq_estim_Q_psi}
\abs{B_{\psi_\eta(x)}^m(x)} 
\geq \C \eta^m.
\end{equation}

We now estimate from below the quantity \(\abs{W_{\overline R}^\mathrm{op}}\).
Since \(\sigma^m \in \cG^m_\eta\), then by definition of \(\cG^m_\eta\) the average integral satisfies
\[
\fint_{\sigma^m + Q^m_{2 \rho \eta}} \dist_{N^n}(u(x), a) \dif x 
\le R,
\]
hence by the Chebyshev inequality we have
\begin{equation}\label{eq1093}
\frac{\abs{Z_{\overline{R}}}}{\abs{\sigma^m + Q^m_{2 \rho \eta}}} \, \overline{R}  
\le R.
\end{equation}
We now proceed with the choice of \(\overline{R}\).
Taking any \(\overline{R} > R\) such that
\begin{equation}\label{choice_R_1}
\abs{\sigma^m + Q^m_{2 \rho \eta}} \, R
\le  \frac{\bigabs{(\sigma^m + Q^m_{2 \rho \eta}) \setminus (\partial\sigma^m + Q^m_{2 \rho \eta})}}{2} \,\overline{R},
\end{equation}
we have
\[
\abs{Z_{\overline{R}}}
\le \frac{\bigabs{(\sigma^m + Q^m_{2 \rho \eta}) \setminus (\partial\sigma^m + Q^m_{2 \rho \eta})}}{2}.
\]
Since \(\sigma^m\) is a cube of inradius \(\eta\), by a scaling argument with respect to \(\eta\) this choice of \(\overline{R}\) is independent of \(\eta\).
Since the maps \(u^\mathrm{op}_\eta\) and \(u\) coincide in \((\sigma^m + Q^m_{2 \rho \eta}) \setminus (\partial\sigma^m + Q^m_{2 \rho \eta})\), we have
\[
Z_{\overline{R}}^{\mathrm{op}} 
\subset Z_{\overline{R}} \cup (\partial\sigma^m + Q^m_{2 \rho \eta}).
\]
By subadditivity of the Lebesgue measure and by the choice of \(\overline{R}\) we get
\[
\abs{Z^\mathrm{op}_{\overline{R}}}
\le \frac{\bigabs{(\sigma^m + Q^m_{2 \rho \eta}) \setminus (\partial\sigma^m + Q^m_{2 \rho \eta})}}{2} 
+ \bigabs{\partial\sigma^m + Q^m_{2 \rho \eta}},
\]
hence the measure of the complement set \(W^\mathrm{op}_{\overline{R}}\) satisfies
\begin{equation}\label{eq_estim_W}
\abs{W^\mathrm{op}_{\overline{R}}} \ge \frac{\bigabs{(\sigma^m + Q^m_{2 \rho \eta}) \setminus (\partial\sigma^m + Q^m_{2 \rho \eta})}}{2} 
= 2^{m-1} (\eta - 2\rho \eta)^m
= \C \eta^{m}.
\end{equation}
By estimates \eqref{eq_1971}, \eqref{eq_estim_Q_psi} and \eqref{eq_estim_W} for every \(x \in \sigma^m\) we deduce that with the above choice of \(\overline{R}\) we have 
\begin{equation}\label{eq1743}
\dist_{\R^{\nu}}{\bigl(u^\mathrm{sm}_\eta(x), B_{N^n}(a,\overline{R})\bigr)} 
\le  \frac{\C}{\eta^{\frac{m-p}{p}}}\norm{D u^\mathrm{op}_\eta}_{L^p(\sigma^m + Q^m_{2 \rho \eta})}.
\end{equation}
By subadditivity of the Lebesgue measure and by the properties of the opening construction, when \(p<m\) we have
\begin{equation}\label{eq_estim_cube_op}
\begin{split}
\norm{Du^{\mathrm{op}}_\eta}^{p}_{L^{p}(\sigma^m +Q^{m}_{2\rho\eta})} & \leq \norm{Du^{\mathrm{op}}_\eta}^{p}_{L^{p}((\sigma^m +Q^{m}_{2\rho\eta})\setminus (E^\ell_\eta + Q^{m}_{2\rho\eta}))} \\
&\hspace{3cm}+ \sum_{\substack{\sigma^\ell \in \cE^{\ell}_\eta\\ \sigma^\ell \subset \sigma^m}} \norm{Du^{\mathrm{op}}_\eta}^{p}_{L^{p}(\sigma^\ell +Q^{m}_{2\rho\eta})}\\
& \leq \C \norm{Du}^{p}_{L^{p}(\sigma^m +Q^{m}_{2\rho\eta})}. 
\end{split}
\end{equation}
When \(p=m\), then \(\ell\) must be replaced by \(m-1\) in the above inequality.
Together with \eqref{eq1743}, this implies the estimate we claimed.
\end{proofclaim}

\commentary{
We now quantify how far the smoothened map \(u^{\mathrm{sm}}_\eta\) is from the large geodesic ball \(B_{N^n}(a; \overline{R})\) on the part of the bad set \(E^{\ell}_\eta\) that lies in the transition between good and bad cubes.
The estimate uses the fact that the opened map \(u^{\mathrm{op}}\) depends on \(\ell\) components nearby \(E^{\ell}_\eta\) and that the convolution parameter is chosen very small in this region.
}

\begin{claim}\label{lemma_distance_bis}
There exists \(\overline{R} > R\) such that, for every \(\eta > 0\) and \(\lambda > 0\),
the directed Hausdorff distance to the geodesic ball \(B_{N^n}(a; \overline{R})\) satisfies
\[
\Dist_{B_{N^n}(a; \overline{R})}\bigl(u^{\mathrm{sm}}_\eta (E^{\ell}_\eta \cap \supp{\psi_\eta})\bigr)
 \leq \frac{C''}{\eta^{\frac{m-p}{p}}}\max_{\sigma^m \in \cG^{m}_\eta} \norm{Du}_{L^{p}(\sigma^m+Q^{m}_{2\rho\eta})},
\]
for some constant \(C'' > 0\) depending on \(m\) and \(p\).
\end{claim}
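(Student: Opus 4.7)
The approach is to adapt the argument of Claim~\ref{claimDistance} using the opening property, which allows one to reduce the convolution defining $u^{\mathrm{sm}}_\eta$ to an $\ell$-dimensional average. For $x \in E^\ell_\eta \cap \supp\psi_\eta$, I would pick a face $\sigma^\ell \in \cE^\ell_\eta$ containing $x$ and choose orthogonal coordinates so that $\sigma^\ell = [-\eta, \eta]^\ell \times \{0''\}$. Proposition~\ref{openingpropGeneral}~$(\ref{itemgenopeningprop1})$ then provides that on the orthogonal tube $\sigma^\ell \times Q^{m-\ell}_{\rho\eta}(0'')$ the map $u^{\mathrm{op}}_\eta$ is independent of the normal coordinate, that is, $u^{\mathrm{op}}_\eta(y', y'') = h(y')$ with $h(y') := u^{\mathrm{op}}_\eta(y', 0'')$. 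Since $\psi_\eta(x) \le (\rho - \underline\rho)\eta$, the convolution ball $B_{\psi_\eta(x)}(x)$ lies in this tube, so that
\[
u^{\mathrm{sm}}_\eta(x) = \int_{B_1^\ell} h(x' - \psi_\eta(x) y') \tilde\varphi(y') \dif y',
\]
where $\tilde\varphi(y') := \int_{\R^{m - \ell}} \varphi(y', y'') \dif y''$ is the tangential marginal of $\varphi$.

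The inclusion $\supp\psi_\eta \subset G^m_\eta + Q^m_{\underline\rho\eta}$ from condition~$(\ref{2143})$ furnishes a good cube $\sigma^m \in \cG^m_\eta$ with $x \in \sigma^m + Q^m_{\underline\rho\eta}$. Mirroring the proof of Claim~\ref{claimDistance}, I would define $W^{\mathrm{op}}_{\overline R}$ and $Z^{\mathrm{op}}_{\overline R}$ with respect to $\sigma^m + Q^m_{2\rho\eta}$ and fix $\overline R > R$ through the same choice as in~\eqref{choice_R_1}, obtaining $\abs{W^{\mathrm{op}}_{\overline R}} \ge \C\eta^m$. Because $u^{\mathrm{op}}_\eta$ depends only on $y'$ throughout the tube, the trace $W' := \{z' \in \R^\ell : (z', 0'') \in W^{\mathrm{op}}_{\overline R}\}$ of $W^{\mathrm{op}}_{\overline R}$ on the $\sigma^\ell$-plane has $\ell$-dimensional measure $\abs{W'} \ge \C\eta^\ell$, and $h(z') \in B_{N^n}(a; \overline R)$ for every $z' \in W'$.

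The proof is then completed by averaging the pointwise inequality $\abs{u^{\mathrm{sm}}_\eta(x) - h(z')} \le C \fint_{B^\ell_{\psi_\eta(x)}(x')} \abs{h(w') - h(z')} \dif w'$ over $z' \in W'$, applying the $\ell$-dimensional Poincar\'e--Wirtinger inequality on a convex $\ell$-ball containing both $W'$ and $B^\ell_{\psi_\eta(x)}(x')$, controlling $\norm{Dh}_{L^p}^p$ via Fubini by $\C\eta^{\ell - m}\norm{Du^{\mathrm{op}}_\eta}_{L^p(\sigma^\ell + Q^m_{\rho\eta})}^p$, and invoking the opening estimate~\eqref{eq_estim_cube_op} to replace $Du^{\mathrm{op}}_\eta$ by $Du$ on the fattened good cube $\sigma^m + Q^m_{2\rho\eta}$. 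The main obstacle is that this chain of inequalities produces a spurious factor $(\eta/\psi_\eta(x))^{\ell/p}$, so one needs $\psi_\eta(x) \gtrsim \eta$ on $E^\ell_\eta \cap \supp\psi_\eta$; this is delivered by combining condition~$(\ref{2142})$ with the Lipschitz bound~$(\ref{2144})$, which give $\psi_\eta(x) \ge t\eta - \dist(x, G^m_\eta)$ and force the required lower bound when the parameters $t$ and $\underline\rho$ are tuned so that $\dist(x, G^m_\eta)$ remains a fraction of $t\eta$ on the relevant region.
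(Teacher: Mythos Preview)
Your argument has a genuine gap at the final step: the lower bound \(\psi_\eta(x)\gtrsim\eta\) that you need to kill the factor \((\eta/\psi_\eta(x))^{\ell}\) (or \((\eta/\psi_\eta(x))^{\ell/p}\)) simply cannot hold on all of \(E^\ell_\eta\cap\supp\psi_\eta\). By continuity, \(\psi_\eta\) vanishes on \(\partial(\supp\psi_\eta)\), and \(E^\ell_\eta\) does meet this boundary: take an \(\ell\)-face \(\sigma^\ell\in\cE^\ell_\eta\setminus\cG^\ell_\eta\) that shares only an \((\ell-1)\)-face with a good cube; points of \(\sigma^\ell\) at \(\ell^\infty\)-distance close to \(\underline\rho\eta\) from \(G^m_\eta\) lie in \(\supp\psi_\eta\) with \(\psi_\eta(x)\) arbitrarily small. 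Your tuning idea is also blocked by the constraints on \(\psi_\eta\): the Lipschitz bound \(\abs{D\psi_\eta}<1\) together with \(\psi_\eta=t\eta\) on \(G^m_\eta\) and \(\psi_\eta=0\) outside \(G^m_\eta+Q^m_{\underline\rho\eta}\) force the transition layer to have width at least \(t\eta\), so \(\dist(x,G^m_\eta)\) is allowed to be \(\ge t\eta\) on \(\supp\psi_\eta\), making \(t\eta-\dist(x,G^m_\eta)\le 0\) there.

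The paper avoids this obstacle by \emph{descending one dimension}: it first records the geometric decomposition
\[
E^\ell_\eta\cap\supp\psi_\eta\subset(E^\ell_\eta\cap G^\ell_\eta)\cup\bigl((E^{\ell-1}_\eta\cap G^{\ell-1}_\eta)+Q^m_{\underline\rho\eta}\bigr).
\]
On \(E^\ell_\eta\cap G^\ell_\eta\subset G^m_\eta\), Claim~\ref{claimDistance} already applies (there \(\psi_\eta=t\eta\)). On the neighborhood of an \((\ell-1)\)-face \(\tau^{\ell-1}\), the opened map \(u^{\mathrm{op}}_\eta\) depends on only \(\ell-1<p\) variables, so the Morrey--Sobolev inequality gives a \emph{pointwise} oscillation bound for \(u^{\mathrm{op}}_\eta\), and the scale-free estimate~\eqref{eq2087} then controls \(\psi_\eta(x)^{-(m-p)/p}\norm{Du^{\mathrm{op}}_\eta}_{L^p(Q^m_{\psi_\eta(x)}(x))}\) by \(\eta^{-(m-p)/p}\norm{Du^{\mathrm{op}}_\eta}_{L^p(\tau^{\ell-1}+Q^m_{\rho\eta})}\) regardless of how small \(\psi_\eta(x)\) is. The paper also uses one more ingredient you do not mention: the averaging freedom in the opening construction is exploited so that at every good vertex the constant value of \(u^{\mathrm{op}}_\eta\) already lies in \(B_{N^n}(a;\overline R)\), which anchors the Morrey estimate. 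Your purely \(\ell\)-dimensional Poincar\'e--Wirtinger route cannot be repaired in the integer case \(p=\ell\), where no Morrey embedding is available at that level; the drop to \(\ell-1\) is essential.
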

\begin{proofclaim}
Using Property~\eqref{2143} satisfied by the function \(\psi_\eta\) (see page~\pageref{2143}), one gets
\[
E^{\ell}_\eta \cap \supp \psi_\eta 
\subset (E^{\ell}_{\eta} \cap G^{\ell}_\eta) \cup \big( ( E^{\ell-1}_\eta\cap G^{\ell-1}_\eta) 
+ Q^{m}_{\underline{\rho}\eta} \big). 
\]
By Claim~\ref{claimDistance} above it thus suffices to prove that, for every \(\tau^{\ell-1}\in \cE^{\ell-1}_\eta \cap \cG^{\ell-1}_\eta \), we have 
\begin{equation}\label{eq1142}
\resetconstant
\Dist_{B_{N^n}(a; \overline{R})} \bigl(u^{\mathrm{sm}}_\eta(\tau^{\ell-1}+Q^{m}_{\underline{\rho}\eta} )\bigr)
\leq \frac{\Cl{cte-1502}}{\eta^{\frac{m-p}{p}}}\max_{\sigma^m \in \cG^{m}_\eta} \norm{Du}_{L^{p}(\sigma^m+Q^{m}_{2\rho\eta})}.
\end{equation}
For this purpose, we observe that there exists \(\overline{R} > R\) such that the map \(u^{\mathrm{op}}_{\eta}\) can be constructed with the following additional property: for every \(\tau^{\ell-1}\in \cE^{\ell - 1}_\eta\cap \cG^{\ell -1}_\eta\),
\begin{equation}\label{eq1171}
\Dist_{B_{N^n}(a; \overline{R})} \big(u^{\mathrm{op}}_\eta(\tau^{\ell - 1} +Q^{m}_{\rho\eta})\big)
\leq \frac{\C}{\eta^{\frac{m-p}{p}}}\norm{Du^\mathrm{op}_\eta}_{L^p(\tau^{\ell-1} + Q^m_{\rho\eta})}.
\end{equation}
Indeed, for every \(\sigma^m\in \cG^m_\eta\) and for every \(\overline{R} > R\) such that
\begin{equation}\label{choice_R_2}
\abs{\sigma^m + Q^m_{2 \rho \eta}} \, R  
\le  \frac{\abs{Q^m_{\rho\eta}}}{2} \, \overline{R}, 
\end{equation}
we have, by \eqref{eq1093},
\[
\abs{Z_{\overline{R}}}
\le \frac{\abs{Q^m_{\rho\eta}}}{2}.
\]
Again by a scaling argument with respect to \(\eta\), this choice of \(\overline{R}\) is independent of \(\eta\).
For each vertex \(v\) of the cube \(\sigma^m\) and for at least half of the points \(x\) of the cube \(Q_{\rho\eta}^m(v)\), 
we thus have \(u(x) \in B_{N^n}(a; \overline{R})\).
Since the opening construction is based on a Fubini-type argument (see the explanation preceding  Proposition~\ref{openingpropSimplex}), we may thus assume that for each vertex \(v\) of \(\partial\sigma^m\cap E^0_\eta\),
the common value of \(u^\mathrm{op}_\eta\) in \(Q_{\rho\eta}^m(v)\) belongs to \(B_{N^n}(a; \overline{R})\).

Consider an \((\ell - 1)\)-dimensional face \(\tau^{\ell - 1}\in \cE^{\ell - 1}_\eta\cap \cG^{\ell - 1}_\eta\).
Since \(p > \ell - 1\), by the Morrey--Sobolev inequality we have, for every \(y, z \in \tau^{\ell-1}\),
\[
\dist_{\R^\nu}{\bigl(u^\mathrm{op}_\eta(y), u^\mathrm{op}_\eta(z)\bigr)}
\le \Cl{cte-1029} {\eta^{1 - \frac{\ell - 1}{p}}} \norm{Du^\mathrm{op}_\eta}_{L^p(\tau^{\ell-1})}.
\]
On the other hand, since  the map \(u^{\mathrm{op}}_\eta\) is, by construction, an \((\ell - 1)\)-dimensional map in \(\tau^{\ell -1} + Q^{m}_{\rho\eta}\), we have \(u_{\eta}^{\mathrm{op}}(\tau^{\ell-1}+Q^{m}_{\rho\eta})=u_{\eta}^{\mathrm{op}}(\tau^{\ell-1})\) and also
\[
\norm{Du^\mathrm{op}_\eta}_{L^p(\tau^{\ell-1})}
\le \frac{\Cl{cte-1030}}{\eta^{\frac{m - (\ell - 1)}{p}}} \norm{Du^\mathrm{op}_\eta}_{L^p(\tau^{\ell-1} + Q^m_{\rho\eta})}.
\]
This implies that, for every \(y, z\in \tau^{\ell-1}+Q^{m}_{\rho\eta}\),
\[
\dist_{\R^\nu}{\bigl(u^\mathrm{op}_\eta(y), u^\mathrm{op}_\eta(z)\bigr)}
\le \frac{\Cr{cte-1029} \Cr{cte-1030}}{\eta^{\frac{m-p}{p}}}\norm{Du^\mathrm{op}_\eta}_{L^p(\tau^{\ell-1} + Q^m_{\rho\eta})}.\label{eq2051}
\]
Taking as \(z\) any vertex of \(\tau^{\ell - 1}\) in \(\cG^{0}_\eta\),  we thus obtain
estimate \eqref{eq1171}. 

We now complete the proof of \eqref{eq1142}.
Recall that the map \(u^{\mathrm{sm}}_\eta\) is obtained from \(u^{\mathrm{op}}_\eta\) by convolution with parameter  \(\psi_\eta\). 
Hence, for every \(\tau^{\ell - 1}\in \cG^{\ell-1}_{\eta}\cap \cE^{\ell-1}_\eta\) and for every \(x \in \tau^{\ell - 1}+Q^{m}_{\underline{\rho}\eta}\) such that \(\psi_\eta(x)\not=0\), by the triangle inequality we have
\begin{multline*}
\dist_{\R^\nu}{\bigl(u^\mathrm{sm}_\eta(x), B_{N^n}(a, \overline{R})\bigr)}\\
\le  \C
\fint_{Q^{m}_{\psi_\eta(x)}(x)} \fint_{Q^{m}_{\psi_\eta(x)}(x)}|u^{\mathrm{op}}_\eta(z)-u^{\mathrm{op}}_\eta(y)|\dif y\dif z \\
+ \fint_{Q^{m}_{\psi_\eta(x)}(x)} \dist_{\R^{\nu}}{\bigl(u^{\mathrm{op}}_\eta(y), {B_{N^n}(a, \overline{R})}\bigr)} \dif y.
\end{multline*}
Since \(x\in \tau^{\ell - 1}+Q^m_{\underline{\rho}\eta}\) and \(\psi_\eta(x)< (\rho-\underline{\rho})\eta\), we have \(Q^{m}_{\psi_\eta(x)}(x)\subset \tau^{\ell - 1} + Q^{m}_{\rho\eta}\). 
Together with \eqref{eq1171}, this implies that, for every \(y\in Q^{m}_{\psi_\eta(x)}(x)\),
\[
\dist_{\R^{\nu}}{\bigl(u^{\mathrm{op}}_\eta(y), {B_{N^n}(a, \overline{R})}\bigr)}
\leq \frac{\C}{\eta^{\frac{m-p}{p}}}\norm{Du^\mathrm{op}_\eta}_{L^p(\tau^{\ell-1} + Q^m_{\rho\eta})}.
\]  
By the Poincar\'e--Wirtinger inequality, we deduce that
\begin{multline}
	\label{eq2145}
\dist_{\R^\nu}{(u^\mathrm{sm}_\eta(x), B_{N^n}(a, \overline{R}))} 
\le \C\bigg(\frac{1}{\psi
_\eta(x)^{\frac{m-p}{p}}}\norm{Du^{\mathrm{op}}_\eta}_{L^p(Q^{m}_{\psi_\eta(x)}(x))}\\
+\frac{1}{\eta^{\frac{m-p}{p}}}\norm{Du^\mathrm{op}_\eta}_{L^p(\tau^{\ell-1} + Q^m_{\rho\eta})}
\bigg).
\end{multline}

Next, from the opening construction, for every cube \(Q^{m}_r(x)\subset \tau^{\ell-1}+Q^{m}_{\rho\eta}\) we have
\begin{equation}
	\label{eq2154}
\frac{1}{r^{m-p}} \int_{Q^{m}_r(x)}\abs{Du^{\mathrm{op}}_\eta}^p \leq \frac{\C}{\eta^{m-p}} 
\int_{\tau^{\ell-1}+Q^{m}_{\rho\eta}}\abs{Du^{\mathrm{op}}_\eta}^p. 
\end{equation}
Indeed, this follows directly from~\eqref{eq2087} when \(p < m\). 
When \(p = m\), one can proceed along the lines of the proof of estimate \eqref{eq2087} with \(\ell\) replaced by \(m\).

Combining inequalities \eqref{eq2145} and \eqref{eq2154} with \(r = \psi_{\eta}(x)\), we get
\[
\dist_{\R^\nu}{(u^\mathrm{sm}_\eta(x), B_{N^n}(a, \overline{R}))} 
\le \frac{\C}{\eta^{\frac{m-p}{p}}} \norm{Du^{\mathrm{op}}_\eta}_{L^p(\tau^{\ell-1}+Q^{m}_{\rho\eta})}.
\]
In view of the estimates satisfied by the opening construction and the fact that \(\tau^{\ell-1}\in \cG^{\ell-1}_\eta\), for every \(x\in \tau^{\ell-1}+Q^{m}_{\underline{\rho}\eta}\) such that \(\psi_\eta(x)\not=0\) we have
\[
 \dist_{\R^\nu}{\bigl(u^\mathrm{sm}_\eta(x), B_{N^n}(a, \overline{R})\bigr)} 
\le \frac{\C}{\eta^{\frac{m-p}{p}}}\max_{\sigma^m\in \cG^{m}_\eta}\norm{Du}_{L^p(\sigma^m+Q^{m}_{2\rho\eta})},
\]
from which \eqref{eq1142} follows.
If \(\psi_\eta(x)=0\), then \(u_{\eta}^{\mathrm{sm}}(x)=u_{\eta}^{\mathrm{op}}(x)\), and the above inequality remains true by \eqref{eq1171}.
This completes the proof of the claim.
\end{proofclaim}

\commentary{
Up to now, the parameters \(\lambda\) and \(\eta\) were arbitrary. 
In the following, they will be subject to some restrictions.
Our aim is to make \(u^{\mathrm{sm}}_{\eta}\) sufficiently close to \(N^{n}\) on the set \(E_{\eta}^{\ell} \cup G_{\eta}^{m}\), so that we can project \(u^{\mathrm{sm}}_{\eta}\) back to the manifold \(N^{n}\).{}
We then extend the projected map to \(E_{\eta}^{m}\) using the zero-degree homogenization.
}

For a given \(R>0\), we take  \(\overline{R} > R\) satisfying the conclusions of  Claims~\ref{claimDistance} and~\ref{lemma_distance_bis}. 
For any such \(\overline{R}\), let \(\iota_{\overline{R}} > 0\) be such that   
\[
\overline{B_{N^n}(a; \overline{R}) + B^\nu_{\iota_{\overline{R}}}}\subset O.
\]
Remember that the geodesic ball \(B_{N^n}(a; \overline{R})\) is a relatively compact subset of \(N^{n}\) and that \(O\) is an open neighborhood of \(N^n\) in \(\R^\nu\) on which can be defined a smooth retraction \(\Pi : O \to N^{n}\) such that \(D\Pi\in L^{\infty}(O)\); see Section~\ref{section_nearest_point_projection}.
We also take \(\lambda>0\) depending on \(\overline{R} >0\), whence also on \(R>0\), such that
\begin{equation}\label{eq1300}
\lambda 
\le \frac{\iota_{\overline{R}}}{\max{\{C',C''\}}},
\end{equation}
where \(C', C'' > 0\) are the constants given  by  Claims~\ref{claimDistance} and~\ref{lemma_distance_bis}, respectively. 
On the one hand,  for every good cube \(\sigma^m\in \cG^m_\eta\) we have
\[
\frac{1}{\eta^{\frac{m-p}{p}}}\norm{Du}_{L^{p}(\sigma^m+Q^{m}_{2\rho\eta})}
\leq  \frac{\iota_{\overline{R}}}{\max{\{C',C''\}}}. 
\]
By the estimate from Claim~\ref{claimDistance}, this implies that 
\[
u^{\mathrm{sm}}_\eta(G^m_\eta)\subset \overline{B_{N^n}(a; \overline{R}) + B^\nu_{\iota_{\overline{R}}}}\subset O.
\]
On the other hand, Claim~\ref{lemma_distance_bis} implies that 
\[
u^{\mathrm{sm}}_\eta(E^{\ell}_\eta \cap \supp{\psi_\eta} ) 
\subset \overline{B_{N^n}(a; \overline{R}) + B^\nu_{\iota_{\overline{R}}}}\subset O.
\]
On \(K^m_\eta\setminus \supp\psi_\eta\), we have \(u_{\eta}^{\mathrm{sm}}= u_{\eta}^{\mathrm{op}}\). 
In particular,
\[
u_{\eta}^{\mathrm{sm}}(E^\ell_\eta \setminus \supp{\psi_\eta})
\subset N^n.
\]
This proves  that 
\begin{equation}\label{eq_range_badset}
u^{\mathrm{sm}}_\eta(E^{\ell}_\eta) \subset N^n \cup  \overline{B_{N^n}(a; \overline{R}) + B^\nu_{\iota_{\overline{R}}}}\subset O.
\end{equation}

We now define the projected map \(u^{\mathrm{pr}}_\eta : G^{m}_\eta \cup E^{\ell}_\eta \to N^n \) by setting 
\[
u^{\mathrm{pr}}_\eta
=\Pi \circ u^{\mathrm{sm}}_\eta.
\]
On \(G^{m}_\eta\), the map \(u^{\mathrm{pr}}_\eta\) is smooth and we have:

\begin{claim}
\label{claimEstimateProjectionGoodCubes}
The map \(u_{\eta}^{\mathrm{pr}}\) satisfies
\begin{multline*}
\norm{Du^{\mathrm{pr}}_\eta - Du}_{L^p(G^{m}_\eta)}\\
\leq \norm{D\Pi}_{L^{\infty}(O)} \norm{Du^{\mathrm{sm}}_\eta-Du}_{L^{p}(G^{m}_\eta)}
 + \bignorm{\abs{D\Pi(u^{\mathrm{sm}}_\eta)-D\Pi(u)}\,\abs{Du}}_{L^{p}(G^{m}_\eta)}.
\end{multline*}
\end{claim}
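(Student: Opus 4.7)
The plan is a direct application of the chain rule combined with the algebraic identity for the difference of two composed differentials. First, since $u^{\mathrm{sm}}_\eta(G^m_\eta) \subset O$ by the previous step (inequality involving the parameter $\lambda$ chosen in \eqref{eq1300}) and $D\Pi$ is bounded on $O$, I can invoke Lemma~\ref{lemma_chain_rule_C1} with $\Phi = \Pi$ and $w = u^{\mathrm{sm}}_\eta|_{G^m_\eta}$ to obtain $u^{\mathrm{pr}}_\eta \in W^{1,p}(G^m_\eta;N^n)$ together with the pointwise formula
\[
D u^{\mathrm{pr}}_\eta(x) = D\Pi(u^{\mathrm{sm}}_\eta(x)) \circ D u^{\mathrm{sm}}_\eta(x) \qquad \text{for a.e. } x \in G^m_\eta.
\]

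Next, I use the fact that $u \in N^n$ almost everywhere together with Property $(b)$ of the nearest point projection from Section~\ref{section_nearest_point_projection}, namely $\Pi(y) = y$ for every $y \in N^n$. Consequently $u = \Pi \circ u$ almost everywhere, and applying Lemma~\ref{lemma_chain_rule_C1} again (this time with $w = u$, noting that $u(G^m_\eta) \subset N^n \subset O$) yields
\[
D u(x) = D\Pi(u(x)) \circ D u(x) \qquad \text{for a.e. } x \in G^m_\eta.
\]

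Subtracting these two identities and inserting the intermediate term $D\Pi(u^{\mathrm{sm}}_\eta(x)) \circ Du(x)$, I obtain the algebraic decomposition
\[
D u^{\mathrm{pr}}_\eta(x) - Du(x) = D\Pi(u^{\mathrm{sm}}_\eta(x)) \circ \bigl(D u^{\mathrm{sm}}_\eta(x) - Du(x)\bigr) + \bigl(D\Pi(u^{\mathrm{sm}}_\eta(x)) - D\Pi(u(x))\bigr) \circ Du(x).
\]
Applying the triangle inequality pointwise, then the submultiplicativity of the operator norm, and finally the bound $|D\Pi(u^{\mathrm{sm}}_\eta(x))| \le \norm{D\Pi}_{L^\infty(O)}$, I get
\[
\abs{D u^{\mathrm{pr}}_\eta - Du} \le \norm{D\Pi}_{L^\infty(O)} \, \abs{D u^{\mathrm{sm}}_\eta - Du} + \abs{D\Pi(u^{\mathrm{sm}}_\eta) - D\Pi(u)} \, \abs{Du}
\]
almost everywhere on $G^m_\eta$. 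Taking $L^p$ norms on $G^m_\eta$ and using the triangle inequality in $L^p$ concludes the estimate claimed in Claim~\ref{claimEstimateProjectionGoodCubes}. There is no substantive obstacle here: the delicate ingredients (namely that $u^{\mathrm{sm}}_\eta$ stays in the tubular neighborhood $O$ where $\Pi$ is smooth with bounded derivative) have already been secured by the choice of $\lambda$ in \eqref{eq1300}; the remainder is a routine application of the chain rule and the triangle inequality.
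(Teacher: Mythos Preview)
Your proof is correct and follows essentially the same route as the paper's own argument: both invoke Lemma~\ref{lemma_chain_rule_C1} for \(u^{\mathrm{pr}}_\eta = \Pi \circ u^{\mathrm{sm}}_\eta\) and \(u = \Pi \circ u\), then use the same add-and-subtract decomposition with the intermediate term \(D\Pi(u^{\mathrm{sm}}_\eta)\circ Du\) before applying the triangle inequality. Your write-up is slightly more explicit about the pointwise algebra, but the ingredients and structure are identical.
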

\begin{proofclaim}
Since \(u^{\mathrm{pr}}_\eta =\Pi \circ u^{\mathrm{sm}}_\eta\), \(u = \Pi \circ u\) and \(u^{\mathrm{sm}}_\eta(G^{m}_\eta)\) is contained in a compact subset of \(O\), Lemma~\ref{lemma_chain_rule_C1} and the triangle inequality imply
\begin{multline*}
 \norm{D u^\mathrm{pr}_\eta - Du}_{L^p(G^{m}_\eta)}
\leq \norm{D\Pi(u^{\mathrm{sm}}_\eta)}_{L^{\infty}(G^{m}_\eta)}\norm{Du^{\mathrm{sm}}_\eta-Du}_{L^{p}(G^{m}_\eta)} \\
+\bignorm{\abs{D\Pi(u^{\mathrm{sm}}_\eta)-D\Pi(u)}\,\abs{Du}}_{L^{p}(G^{m}_\eta)}.
\qedhere 
\end{multline*}
\end{proofclaim}

\commentary{
We now make sure that the zero-degree homogenization can be successfully performed on \(E^{m}_\eta\) using the values of \(u^{\mathrm{pr}}_\eta\) on \(E_{\eta}^{\ell}\).{}
For this purpose, we need some local control of the \(L^{p}\) norm of \(D u^{\mathrm{pr}}_\eta\) in terms of the rescaled \(L^{p}\) norm of \(Du\).{}
This is enough to get the strong convergence in \(W^{1, p}\) since the bad set \(E^{m}_\eta\) is small.
}

\begin{claim}\label{lemma_usm_estimate}
The map \(u^{\mathrm{pr}}_\eta|_{E^{\ell}_\eta}\) belongs to \(W^{1, p}(E^{\ell}_\eta; N^n)\) and, 
for every \(\tau^{\ell} \in \cE_{\eta}^{\ell}\) , we have \(u^{\mathrm{pr}}_\eta|_{\partial\tau^{\ell}} \in W^{1, p}(\partial\tau^{\ell}; N^n)\) and also
\begin{equation}
\label{eq2260}
\norm{Du_{\eta}^{\mathrm{pr}}}_{L^{p}(\tau^{\ell})}
\leq \frac{C}{\eta^{\frac{m-\ell}{p}}}\norm{Du}_{L^{p}(\tau^{\ell}+Q^{m}_{2\rho\eta})},
\end{equation}
for some constant \(C > 0\) depending on \(m\) and \(p\).
\end{claim}

\begin{proofclaim}
By Lemma~\ref{lemma_chain_rule_C1}, the inclusion \eqref{eq_range_badset} and the fact that \(u_{\eta}^{\mathrm{pr}}=\Pi \circ u_{\eta}^{\mathrm{sm}}\), it is enough to prove the claim for \(u_{\eta}^{\mathrm{sm}}\) instead of \(u^{\mathrm{pr}}_\eta\). 
We first prove that, for every \(\tau^\ell \in \cE^\ell\), the restriction \(u_{\eta}^{\mathrm{sm}}|_{\tau^\ell}\) belongs to \(W^{1,p}(\tau^\ell ; \R^\nu)\) and satisfies the estimate above.
For this purpose, we can assume that 
\[
\tau^{\ell}=(-\eta, \eta)^{\ell} \times \{0''\},
\]
where \(0'' \in \R^{m - \ell}\).
Accordingly, we write every vector \(y \in \R^{m}\) as \(y = (y',y'')\in \R^\ell \times \R^{m-\ell}\).
Since, for \(y\in \tau^{\ell}+Q^{m}_{\rho\eta}\),
\[
u_{\eta}^{\mathrm{op}}(y) = u_{\eta}^{\mathrm{op}}(y', 0''),
\]
and since \(D\psi_\eta\) is uniformly bounded with respect to \(\eta\),
for every \(x' \in \tau^{\ell}\) we have
\[{}
\resetconstant
\begin{split}
\abs{Du^{\mathrm{sm}}_\eta(x', 0'')}^p 
& \leq \C \int_{B^{m}_1} \abs{Du^{\mathrm{op}}_\eta(x'-\psi_{\eta}(x',0'')y', 0'')}^p\dif y\\
& \leq \Cl{cte-1504} \int_{B^{\ell}_1}\abs{Du^{\mathrm{op}}_\eta(x'-\psi_{\eta}(x',0'')y', 0'')}^p \dif y'. 
\end{split}
\]
Hence, by Fubini's theorem,
\[
\int_{\tau^{\ell}}\abs{Du_{\eta}^{\mathrm{sm}}(x', 0'')}^p\dif x'
\leq \Cr{cte-1504}  
\int_{B^{\ell}_1}\int_{\tau^{\ell}}\abs{Du^{\mathrm{op}}_\eta(x'-\psi_{\eta}(x',0'')y', 0'')}^p\dif x'\dif y'.
\]
Using the change of variables \(z'= x'- \psi_{\eta}(x',0'')y'\) with respect to the variable \(x'\), we get
\begin{multline*}
\int_{\tau^{\ell}}\abs{Du_{\eta}^{\mathrm{sm}}(x', 0'')}^p\dif x'\\
 \begin{aligned}
& \leq \frac{\C}{1-\norm{D\psi_\eta}_{L^{\infty}(K^{m}_\eta)}} 
 \int_{B^{\ell}_1}\dif y'\int\limits_{(-(1+\rho)\eta, (1+\rho)\eta)^{\ell}}\abs{Du_{\eta}^{\mathrm{op}}(z', 0'')}^p\dif z'\\
& \leq {\C} \int\limits_{(-(1+\rho)\eta, (1+\rho)\eta)^{\ell}}\abs{Du_{\eta}^{\mathrm{op}}(z', 0'')}^p\dif z'.
 \end{aligned}
\end{multline*}
We observe that
\[
 \int\limits_{(-(1+\rho)\eta, (1+\rho)\eta)^{\ell}}\abs{Du_{\eta}^{\mathrm{op}}(z',0'')}^p\dif z'
\leq  \frac{\C}{\eta^{m-\ell}}\int_{\tau^{\ell}+Q^{m}_{\rho\eta}}\abs{Du_{\eta}^{\mathrm{op}}}^p.
\]
Combining both inequalities, we get
\[
 \int_{\tau^{\ell}}\abs{Du_{\eta}^{\mathrm{sm}}}^p \leq 
\frac{\C}{\eta^{m-\ell}}\int_{\tau^{\ell}+Q^{m}_{\rho\eta}}\abs{Du_{\eta}^{\mathrm{op}}}^p.
\]
When \(p < m\), we deduce that \(u_{\eta}^{\mathrm{sm}}|_{\tau^\ell}\) belongs to \(W^{1,p}(\tau^\ell ; \R^\nu)\) and \eqref{eq2260} holds by the estimate in Assertion~$(\ref{itemgenopeningprop6})$ of Proposition~\ref{openingpropGeneral} satisfied by the opened map \(u_{\eta}^{\mathrm{op}}\).{}
When \(p = m\), we rely instead on the estimate~\eqref{eq_estim_cube_op}, which holds with \(\ell\) replaced by \(m-1\).{}

We can prove in a similar way that \(u_{\eta}^{\mathrm{sm}}|_{\tau^{\ell - 1}}\) belongs to \(W^{1,p}(\tau^{\ell - 1} ; \R^\nu)\) for every \(\tau^{\ell - 1} \in \cE_{\eta}^{\ell - 1}\).{}
Moreover, the map \(u_{\eta}^{\mathrm{op}}\) is \((\ell-1)\)-dimensional on \(E^{\ell-1}_{\eta}+Q^{m}_{\rho\eta}\) and thus continuous by the Morrey--Sobolev embedding. 
This implies that \(u_{\eta}^{\mathrm{sm}}\) is continuous on a neighborhood of \(E^{\ell-1}_\eta\). 
Hence, the first part of the claim also follows.
\end{proofclaim}

\commentary{
The zero-degree homogenization of \(u_{\eta}^{\mathrm{pr}}\) should be performed inside \(E^{m}_{\eta}\) so as to preserve the values of \(u_{\eta}^{\mathrm{pr}}\) on the (higher-dimensional) common faces with good cubes.
Such a construction naturally yields a Sobolev map on the entire domain \(E_{\eta}^{m} \cup G_{\eta}^{m}\).
}

By construction, the map \(u_{\eta}^{\mathrm{sm}}\) is smooth on a neighborhood of \(E^{i}_\eta \cap G^{i}_\eta\) for every \(i\in \{\ell, \dots, m-1\}\). In particular, \(u_{\eta}^{\mathrm{pr}}|_{E^{i}_\eta \cap G^{i}_\eta}\) belongs to \(W^{1, p}(G^{i}_\eta \cap E^i_\eta; \R^{\nu})\), and we now estimate the \(L^p\) norm of \(D(u_{\eta}^{\mathrm{pr}}|_{E^{i}_\eta\cap G^{i}_\eta}) \) when \(p<m\) (and thus \(\ell \leq m-1 \)):

\begin{claim}\label{lemma_sm_good}
Assume that \(p<m\).
For every \(i\in \{\ell, \dotsc, m-1\}\), we have
\[
\norm{Du^{\mathrm{pr}}_\eta}_{L^{p}(E^i_\eta \cap G^{i}_\eta)}\leq \frac{C}{\eta^{\frac{m-i}{p}}} \norm{Du}_{L^{p}(E^i_\eta+Q^{m}_{2\rho\eta})},
\]
for some constant \(C>0\) depending on \(m\) and \(p\).
\end{claim}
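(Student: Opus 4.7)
The plan is first to reduce the estimate from $u^{\mathrm{pr}}_\eta$ to $u^{\mathrm{sm}}_\eta$ via the chain rule, and then to exploit the key observation that the mollification scale $\psi_\eta$ is constant equal to $t\eta$ along every face $\tau^i \in \cG^i_\eta \cap \cE^i_\eta$. This constancy permits the tangential derivatives of $u^{\mathrm{sm}}_\eta$ on such a face to be written as a \emph{standard}, fixed-scale mollification of the tangential derivatives of $u^{\mathrm{op}}_\eta$, which can then be estimated by a Fubini/slicing argument in the spirit of the proof of Claim~\ref{lemma_usm_estimate}.

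For the reduction step, since $u^{\mathrm{sm}}_\eta(G^m_\eta)$ is contained in a compact subset of the tubular neighborhood $O$ on which $\Pi$ is smooth with $D\Pi \in L^{\infty}(O)$ (by the same argument as in Claim~\ref{claimEstimateProjectionGoodCubes}), Lemma~\ref{lemma_chain_rule_C1} yields the pointwise bound $\abs{Du^{\mathrm{pr}}_\eta} \le \norm{D\Pi}_{L^\infty(O)}\, \abs{Du^{\mathrm{sm}}_\eta}$ on $G^m_\eta$. It therefore suffices to prove the estimate with $u^{\mathrm{sm}}_\eta$ in place of $u^{\mathrm{pr}}_\eta$.

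Next I would fix a face $\tau^i \in \cG^i_\eta \cap \cE^i_\eta$ and, after an affine change of variables, assume $\tau^i = [-\eta, \eta]^i \times \{0''\}$ in $\R^i \times \R^{m-i}$, writing $x = (x', x'')$ accordingly. Since $\tau^i \subset G^m_\eta$, Property~$(\ref{2142})$ of $\psi_\eta$ gives $\psi_\eta \equiv t\eta$ on $\tau^i$, and in particular $\partial_j \psi_\eta = 0$ on $\tau^i$ for every $j \in \{1, \dots, i\}$. Differentiating the defining formula of $\varphi_{\psi_\eta} \ast u^{\mathrm{op}}_\eta$ under the integral sign then yields, for such $j$ and every $x' \in (-\eta, \eta)^i$,
\[
\partial_j u^{\mathrm{sm}}_\eta(x', 0'') = \int_{B_1^m} \partial_j u^{\mathrm{op}}_\eta\bigl((x', 0'') - t\eta\, y\bigr) \varphi(y) \dif y,
\]
i.e., a standard mollification restricted to $\tau^i$. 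Applying Jensen's inequality, Fubini's theorem, and the change of variables $z' = x' - t\eta\, y'$ in the tangential block followed by $z'' = -t\eta\, y''$ in the normal block should then yield the trace-type bound
\[
\int_{\tau^i} \bigabs{\partial_j u^{\mathrm{sm}}_\eta(x', 0'')}^p \dif x' \le \frac{C}{(t\eta)^{m-i}} \int_{\tau^i + Q^m_{t\eta}} \bigabs{Du^{\mathrm{op}}_\eta}^p.
\]

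Finally, since $\tau^i$ is a face of some bad $m$-cube, every $\ell$-subface of $\tau^i$ belongs to $\cE^\ell_\eta$, so combining Proposition~\ref{openingpropGeneral}$(\ref{itemgenopeningprop6})$ (applied to each such $\ell$-subface) with the identity $u^{\mathrm{op}}_\eta = u$ outside $E^\ell_\eta + Q^m_{2\rho\eta}$ and the inequality $t < \rho$ yields $\norm{Du^{\mathrm{op}}_\eta}_{L^p(\tau^i + Q^m_{t\eta})} \le C \norm{Du}_{L^p(\tau^i + Q^m_{2\rho\eta})}$. Summing the resulting face-wise estimate over $\tau^i \in \cG^i_\eta \cap \cE^i_\eta$, using the bounded multiplicity of overlap of the neighborhoods $\tau^i + Q^m_{2\rho\eta}$ and the inclusion $\tau^i + Q^m_{2\rho\eta} \subset E^i_\eta + Q^m_{2\rho\eta}$, will complete the proof. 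I expect the main obstacle to be the rigorous verification of the slicing step in the third paragraph: its validity rests crucially on the constancy of $\psi_\eta$ along the tangential directions of $\tau^i$, which is precisely what reduces the variable-scale mollification to a fixed-scale one in those directions.
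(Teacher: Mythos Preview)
Your proposal is correct and follows essentially the same approach as the paper: reduce to \(u^{\mathrm{sm}}_\eta\) via the chain rule, use the constancy \(\psi_\eta = t\eta\) on \(G^m_\eta\) to write \(Du^{\mathrm{sm}}_\eta\) as a fixed-scale mollification of \(Du^{\mathrm{op}}_\eta\), apply Jensen and a Fubini/slicing argument to get the \(\eta^{-(m-i)}\) factor, and then pass from \(Du^{\mathrm{op}}_\eta\) to \(Du\) via the opening estimates. The only cosmetic difference is that the paper carries out the Fubini/co-area step globally over \(G^i_\eta \cap E^i_\eta\) at once rather than face-by-face, which slightly streamlines the final covering/overlap bookkeeping you describe in your last paragraph.
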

\begin{proofclaim} Once again, we only need to prove the estimate with \(u^{\mathrm{sm}}_\eta\) instead of \(u^{\mathrm{pr}}_\eta\).
Fix  \(i\in \{\ell, \dotsc, m-1\}\). 
For every \(x \in E^i_\eta \cap G^{i}_\eta\), we have \(\psi_\eta(x) = t\eta\) and thus
\[
u^{\mathrm{sm}}_{\eta}(x)=\int_{B^m_1} u^{\mathrm{op}}_{\eta}(x-t\eta y )\varphi(y)\dif y. 
\]
Hence, by Jensen's inequality and a change of variable,
\[{}
\resetconstant
\abs{Du^{\mathrm{sm}}_{\eta}(x)}^p
\leq {\Cl{cte-1505}} \int_{B^m_1} \abs{Du^{\mathrm{op}}_{\eta}(x-t\eta y )}^p\dif y
= \frac{\Cr{cte-1505}}{(t\eta)^m} \int_{B^m_{t\eta} (x)} \abs{Du^{\mathrm{op}}_{\eta}}^p. 
\]
Since the parameter \(t < \rho\) is fixed, we can incorporate it in the constant.
Integrating both members with respect to the \(i\)-dimensional Hausdorff measure over \(E^i_\eta \cap G^{i}_\eta\), by Fubini's theorem we get
\[{}
\int_{E^i_\eta \cap G^{i}_\eta} \abs{Du^{\mathrm{sm}}_{\eta}}^p 
\leq \frac{{\C}}{\eta^{m-i}} \int_{E^i_\eta+Q^{m}_{2\rho\eta}} \abs{Du^{\mathrm{op}}_{\eta}}^p.
\]
By construction of \(u^{\mathrm{op}}_{\eta}\), we also have
\[
\begin{split}
 \int_{E^i_\eta+Q^{m}_{2\rho\eta}} \abs{Du^{\mathrm{op}}_{\eta}}^p 
&\leq \int_{(E^i_\eta+Q^{m}_{2\rho\eta}) \setminus (E^{\ell}_\eta + Q^{m}_{2\rho\eta})}\abs{Du}^p + \sum_{\tau^\ell\in \cE^\ell_\eta} \;	 \int_{\tau^\ell +Q^{m}_{2\rho\eta}}\abs{Du_{\eta}^{\mathrm{op}}}^p\\
& \leq \C \int_{E^{i}_\eta+Q^{m}_{2\rho\eta}} \abs{Du}^p,  
\end{split}
\]
and the conclusion follows.
\end{proofclaim}

\commentary{
We now proceed to construct a bounded extension \(u^{\mathrm{be}}_\eta\) to \(E_{\eta}^{m}\) of the map \(u^{\mathrm{pr}}_\eta|_{E^{m}_\eta \cap G^{m}_{\eta}}\).{}
It follows from Claim~\ref{lemma_usm_estimate} and the Morrey--Sobolev embedding that, when \(\ell < p\), the projected map \(u^{\mathrm{pr}}_{\eta}\) is bounded on \(E^{\ell}_\eta\).{}
In this case, we apply the zero-degree homogenization to extend \(u^{\mathrm{pr}}_\eta\) inside \(E^{m}_{\eta}\) using its values on \(E^{\ell}_{\eta}\).
When \(\ell = p\), we can only infer that \(u^{\mathrm{pr}}_{\eta}\) is bounded on the lower-dimensional skeleton \(E_{\eta}^{\ell - 1}\).{}
We then apply the trimming property of dimension \(p\), restated in terms of Sobolev maps by Proposition~\ref{proposition_continuous_trimming_property}, to obtain a new, bounded and continuous, function on \(E_{\eta}^{\ell}\).{}
The quantitative character of the trimming property ensures that this new function satisfies the same energy bounds. 
}

\begin{claim}
\label{claimEstimateProjectionBadCubes}
If \(\ell < p\) or if \(\ell = p\) and \(N^{n}\) satisfies the trimming property of dimension \(p\), then there exists a map \(u^{\mathrm{be}}_\eta \in (W^{1,p} \cap L^{\infty})(E^m_\eta ; N^n)\) such that
 \(u^{\mathrm{be}}_\eta = u^{\mathrm{pr}}_\eta \) on \(E^{m}_\eta \cap G^m_\eta\) and
\[
\norm{Du^{\mathrm{be}}_\eta}_{L^{p}(E_{\eta}^{m})} 
\leq C\bigg( \eta^{\frac{m - \ell}{p}} \norm{Du^{\mathrm{pr}}_\eta}_{L^{p}(E_{\eta}^{\ell})} + \sum_{i = \ell+1 }^{m-1} \eta^{\frac{m - i}{p}} \norm{Du^{\mathrm{pr}}_\eta}_{L^{p}(E^i_\eta \cap G^{i}_\eta)} \bigg),
\]
for some constant \(C>0\) depending on \(m\), \(p\) and \(N^n\).
\end{claim}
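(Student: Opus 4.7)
The plan is to apply the zero-degree homogenization (Proposition~\ref{proposition_homogeneisation_1}) on the bad skeleton $\cE^m_\eta$, with prescribed data on the subskeletons $\cS^i := \cG^i_\eta \cap \cE^i_\eta$ for $i \in \{\ell+1, \dotsc, m-1\}$, so that $S^{m-1}$ contains the interface $G^m_\eta \cap E^m_\eta$ where $u^{\mathrm{be}}_\eta$ is required to match $u^{\mathrm{pr}}_\eta$. The hypothesis $p < \ell + 1$ is automatic since $\ell = \floor{p}$. On each $S^i$ the map $u^{\mathrm{pr}}_\eta = \Pi \circ u^{\mathrm{sm}}_\eta$ is smooth, because $u^{\mathrm{sm}}_\eta$ is smooth on good cubes. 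The piece still missing is continuous, bounded data on $E^\ell_\eta$, and this is where the two cases diverge.

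When $\ell < p$, I would set $\widetilde u^{\mathrm{pr}}_\eta := u^{\mathrm{pr}}_\eta$ on $E^\ell_\eta$: by Claim~\ref{lemma_usm_estimate} it lies in $W^{1,p}(E^\ell_\eta; N^n)$, and the Morrey--Sobolev embedding (valid since $p > \ell$) yields continuity and boundedness on each $\ell$-face. When $\ell = p$, Morrey--Sobolev is no longer available on $\ell$-faces, and this is exactly where the trimming property enters. For each face $\tau^\ell \in \cE^\ell_\eta$, the trace of $u^{\mathrm{pr}}_\eta|_{\tau^\ell}$ on $\partial \tau^\ell$ still belongs to $W^{1,p}(\partial \tau^\ell; N^n)$ in the skeleton sense and is continuous there, because on the $(\ell-1)$-faces of $\partial \tau^\ell$ the opened map depends on at most $\ell - 1$ variables and $p > \ell - 1$. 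Invoking Proposition~\ref{proposition_continuous_trimming_property} on each $\tau^\ell$ (after rescaling to the unit cube $Q^p$) produces a continuous, bounded map $\widetilde u^{\mathrm{pr}}_\eta|_{\tau^\ell}$ into $N^n$ equal to $u^{\mathrm{pr}}_\eta$ on $\partial \tau^\ell$ and satisfying
\[
\norm{D\widetilde u^{\mathrm{pr}}_\eta}_{L^p(\tau^\ell)} \le C \norm{Du^{\mathrm{pr}}_\eta}_{L^p(\tau^\ell)}.
\]
Because these local modifications preserve the boundary values on $(\ell-1)$-faces, they glue consistently into a single continuous, bounded map $\widetilde u^{\mathrm{pr}}_\eta \in W^{1,p}(E^\ell_\eta; N^n)$.

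In both cases, the combined data --- $\widetilde u^{\mathrm{pr}}_\eta$ on $E^\ell_\eta$ and $u^{\mathrm{pr}}_\eta$ on $S^i$ for $i \in \{\ell+1, \dotsc, m-1\}$ --- defines a continuous function on $E^\ell_\eta \cup S^{m-1}$ with values in $N^n$; compatibility on shared sub-faces is inherited from the continuity of $u^{\mathrm{pr}}_\eta$ on $E^{\ell - 1}_\eta$. Proposition~\ref{proposition_homogeneisation_1} then delivers $u^{\mathrm{be}}_\eta \in W^{1,p}(E^m_\eta; \R^\nu)$ whose image is contained in that of the boundary data --- hence in $N^n$ and bounded --- and whose energy satisfies
\[
\norm{Du^{\mathrm{be}}_\eta}_{L^p(E^m_\eta)} \leq C\bigg(\eta^{\frac{m-\ell}{p}} \norm{D\widetilde u^{\mathrm{pr}}_\eta}_{L^p(E^\ell_\eta)} + \sum_{i=\ell+1}^{m-1} \eta^{\frac{m-i}{p}} \norm{Du^{\mathrm{pr}}_\eta}_{L^p(S^i)}\bigg).
\]
Combined with the bound $\norm{D\widetilde u^{\mathrm{pr}}_\eta}_{L^p(E^\ell_\eta)} \le C \norm{Du^{\mathrm{pr}}_\eta}_{L^p(E^\ell_\eta)}$ from the previous paragraph, this yields the claim.

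The main obstacle --- and the very reason why the trimming property must be imposed as a hypothesis --- is the integer case $\ell = p$: without the quantitative continuous extension supplied by that property, the $W^{1,p}$ map $u^{\mathrm{pr}}_\eta|_{E^\ell_\eta}$ need not be bounded nor continuous, and the zero-degree homogenization could not yield a bounded map on $E^m_\eta$.
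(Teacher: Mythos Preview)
Your overall approach is the same as the paper's: build continuous bounded data on \(E^\ell_\eta\) (via Morrey--Sobolev when \(\ell < p\), via the trimming property when \(\ell = p\)), prescribe \(u^{\mathrm{pr}}_\eta\) on the interface skeleton \(\cS^{m-1}_\eta = \cG^{m-1}_\eta \cap \cE^{m-1}_\eta\), and then invoke Proposition~\ref{proposition_homogeneisation_1}.

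There is, however, a genuine compatibility gap in the integer case. You apply the trimming property on \emph{every} face \(\tau^\ell \in \cE^\ell_\eta\), but then your data \(\widetilde u^{\mathrm{pr}}_\eta\) on \(E^\ell_\eta\) need not agree with \(u^{\mathrm{pr}}_\eta\) on the overlap with \(S^{m-1}_\eta\). The set \(E^\ell_\eta \cap S^{m-1}_\eta\) contains entire \(\ell\)-faces \(\tau^\ell\) that are subfaces of some \(\sigma^{m-1} \in \cS^{m-1}_\eta\); any such \(\tau^\ell\) lies in \(\cG^\ell_\eta \cap \cE^\ell_\eta\). On those faces the trimmed map \(\widetilde u^{\mathrm{pr}}_\eta|_{\tau^\ell}\) is only guaranteed to match \(u^{\mathrm{pr}}_\eta\) on \(\partial\tau^\ell\), not on all of \(\tau^\ell\), so the combined data is not a single continuous function on \(E^\ell_\eta \cup S^{m-1}_\eta\) and Proposition~\ref{proposition_homogeneisation_1} cannot be applied. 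Your justification that ``compatibility on shared sub-faces is inherited from the continuity of \(u^{\mathrm{pr}}_\eta\) on \(E^{\ell-1}_\eta\)'' does not suffice: the shared pieces are \(\ell\)-dimensional, not \((\ell-1)\)-dimensional.

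The fix is what the paper does: on faces \(\tau^\ell \in \cE^\ell_\eta \cap \cG^\ell_\eta\) the map \(u^{\mathrm{pr}}_\eta\) is already smooth (these are faces of good cubes), so keep \(\widetilde u^{\mathrm{pr}}_\eta = u^{\mathrm{pr}}_\eta\) there and apply the trimming property only on \(\tau^\ell \in \cE^\ell_\eta \setminus \cG^\ell_\eta\). Then \(\widetilde u^{\mathrm{pr}}_\eta = u^{\mathrm{pr}}_\eta\) on \(G^\ell_\eta \cap E^\ell_\eta \supset S^{m-1}_\eta \cap E^\ell_\eta\), the compatibility holds, and the rest of your argument goes through unchanged.
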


\begin{proofclaim}
We first define the extension \(u^{\mathrm{be}}_\eta\) on the subskeleton \(E_{\eta}^{\ell}\).{}
On every face \(\tau^{\ell}\in \cE^{\ell}_\eta \cap \cG^{\ell}_\eta\), the map \(u^{\mathrm{pr}}_{\eta} \)  is smooth, and we set \(u^{\mathrm{be}}_{\eta} =u^{\mathrm{pr}}_{\eta} \) in \(\tau^{\ell}\).{}
When \(\ell < p\), the map \(u^{\mathrm{pr}}_{\eta}\) is continuous on \(E^{\ell}_\eta\), and we also set \(u^{\mathrm{be}}_{\eta} = u^{\mathrm{pr}}_{\eta}\) on \(\tau^{\ell}\in \cE^{\ell}_\eta \setminus \cG^{\ell}_\eta\).
When \(\ell = p\), we assume that the trimming property holds, whence by Proposition \ref{proposition_continuous_trimming_property} 
we may replace \(u^{\mathrm{pr}}_\eta\) on each face \(\tau^{\ell} \in \cE^{\ell}_\eta \setminus \cG^{\ell}_\eta\) 
without changing its trace on \(E_{\eta}^{\ell - 1}\) to get a continuous map 
\(u^{\mathrm{be}}_{\eta} \in W^{1, p}(\tau^{\ell}; N^n)\)
such that 
\begin{equation}\label{eq2189}
\resetconstant
\norm{Du^{\mathrm{be}}_\eta}_{L^{p}(\tau^{\ell})} 
\leq {\C} \norm{Du^{\mathrm{pr}}_\eta}_{L^{p}(\tau^{\ell})}.
\end{equation}
The map \(u^{\mathrm{be}}_{\eta}\) thus defined in \(E_{\eta}^{\ell}\) is continuous and belongs to \(W^{1,p}(E^\ell_\eta ; N^n)\).

The definition of the extension \(u^{\mathrm{be}}_\eta\) on \(E_{\eta}^{m}\) is complete when \(p=m\). 
We now proceed assuming that \(p < m\); thus \(\ell \le m - 1\).
Let  \(\cS^{m-1}_\eta = \cE^{m-1}_\eta \cap \cG^{m-1}_\eta\), and we extend \(u^{\mathrm{be}}_{\eta}\) to \(S^{m-1}_\eta\) as a continuous Sobolev map by \(u^{\mathrm{pr}}_{\eta}\). 
This is possible since 
\(u^{\mathrm{pr}}_{\eta}=u^{\mathrm{be}}_{\eta}\) on \((E^{\ell}_\eta \cap S^{m-1}_\eta)\subset (E^{\ell}_\eta \cap G^{\ell}_\eta)\).
We now apply Proposition~\ref{proposition_homogeneisation_1} to   the map \(u^{\mathrm{be}}_{\eta} : E^\ell_\eta\cup S^{m-1}_\eta\to N^n\).
The resulting map, that we still denote by \(u^{\mathrm{be}}_{\eta}\), belongs to \(W^{1,p}(E^m_\eta; N^n)\), agrees with \(u^{\mathrm{pr}}_{\eta}\) on \( E^{m}_\eta\cap G^{m}_{\eta}=S^{m-1}_\eta\) and satisfies
\[
u^{\mathrm{be}}_{\eta}(E^m_\eta) \subset u^{\mathrm{be}}_{\eta}(E^\ell_\eta\cup S^{m-1}_\eta).
\]
In particular, we have \(u^{\mathrm{be}}_{\eta}\in L^{\infty}(E^{m}_\eta ; N^n)\). 
Finally, 
\[
\int_{E^m_\eta} |Du^{\mathrm{be}}_{\eta}|^p \leq {\C} \bigg( \eta^{m - \ell} \int_{E^\ell_\eta} |Du^{\mathrm{be}}_{\eta}|^p  + \sum_{i=\ell+1}^{m-1} \eta^{m - i}\int_{S^i_\eta} |Du^{\mathrm{pr}}_{\eta}|^p\bigg). 
\]
Since \(S^{i}_{\eta}\subset E^{i}_\eta \cap G^{i}_\eta\), the required estimate follows from the above inequality and \eqref{eq2189}.	
\end{proofclaim}

We deduce from Claims~\ref{lemma_usm_estimate}--\ref{claimEstimateProjectionBadCubes}  that
\begin{equation}\label{eq1350}
\resetconstant
\norm{Du^{\mathrm{be}}_\eta}_{L^{p}(E^{m}_\eta)} \leq \C \norm{Du}_{L^{p}(E^{m}_\eta + Q^{m}_{2\rho\eta})}. 
\end{equation}
We now complete the proof of the theorem.
For this purpose, let \((R_i)_{i \in \N}\) be a sequence of positive numbers diverging to infinity. 
Accordingly, Claims~\ref{claimDistance} and~\ref{lemma_distance_bis} yield a sequence \((\overline{R_i})_{i \in \N}\) from which we define 
a sequence of positive numbers \((\lambda_{R_i})_{i\in \N}\) satisfying \eqref{eq1300}. 
Finally, we take a sequence of positive numbers \((\eta_i)_{i \in \N}\) converging to zero such that
\[
\lim_{i\to \infty}{\frac{\eta_i}{\lambda_{R_i}}}=0.
\]

By Claim~\ref{claim_measure_bad}, we have
\begin{equation}\label{eq2533}
\lim_{i \to \infty}{\abs{E^m_{\eta_i}  + Q^m_{2\rho\eta_i}}} = 0.
\end{equation}
We proceed to prove that
\begin{equation}\label{eq2585}
\lim_{i \to \infty}{\norm{D u_{\eta_{i}}^{\mathrm{pr}} - Du}_{L^{p}(G_{\eta_i}^{m})}}
= 0.
\end{equation}
Indeed, from estimate \eqref{eq1590} in Claim~\ref{claimEstimateDirectDistance}, we have
\[
\norm{u^\mathrm{sm}_{\eta_i} - u}_{L^p(Q^{m}_{1+\gamma})}
\leq \sup_{v \in B_1^m}{\norm{\tau_{\psi_{\eta_i} v}(u) -  u}_{L^p(Q^{m}_{1+\gamma})}} + C \norm{u^\mathrm{op}_{\eta_i} - u}_{L^p(Q^{m}_{1+2\gamma})}.
\]
Since \(u= u^\mathrm{op}_{\eta_i}\) outside \(E^{m}_{\eta_i}+Q^{m}_{2\rho\eta_i}\), by the Poincar\'e inequality for functions vanishing on a set of positive measure and by property \eqref{eq2533} we have
\[
\norm{u^\mathrm{op}_{\eta_i}-u}_{L^p(Q^{m}_{1+2\gamma})} 
 \leq \C \norm{Du^\mathrm{op}_{\eta_i} - Du}_{L^p(Q^{m}_{1+2\gamma})}
 \leq \C \norm{Du}_{L^p(E^{m}_{\eta_i}+Q^{m}_{2\rho\eta_i})}.
\]
Hence, 
\[
\norm{u^\mathrm{sm}_{\eta_i} - u}_{L^p(Q^{m}_{1+\gamma})}
\leq \sup_{v \in B_1^m}{\norm{\tau_{\psi_{\eta_i} v}(u) -  u}_{L^p(Q^{m}_{1+\gamma})}} + \C \norm{ Du}_{L^p(E^{m}_{\eta_i}+Q^{m}_{2\rho\eta_i})},
\]
which proves that
\begin{equation}\label{eq2232}
\lim_{i\to \infty} \norm{u^{\mathrm{sm}}_{\eta_i}-u}_{L^p(Q^{m}_{1+\gamma})}=0.
\end{equation}
By the dominated convergence theorem, we thus get
\[
\lim_{i\to \infty}{\norm{\abs{D\Pi(u^{\mathrm{sm}}_{\eta_i})-D\Pi(u)}\,\abs{Du}}_{L^p(G^{m}_{\eta_i})}}
=0.
\]
By estimate \eqref{eq1594} in Claim~\ref{claimEstimateDirectDistance}, we also have
\[
\lim_{i\to \infty}{\norm{Du^{\mathrm{sm}}_{\eta_i} - Du}_{L^p(G^{m}_{\eta_i})}}
=0.
\]
Both limits
and Claim~\ref{claimEstimateProjectionGoodCubes}  imply \eqref{eq2585}.

Since \(u_{\eta_{i}}^{\mathrm{pr}} = u_{\eta_{i}}^{\mathrm{be}}\) on \(G_{\eta_i}^{m} \cap E_{\eta_i}^{m}\), the function obtained by juxtaposing \(u_{\eta_{i}}^{\mathrm{pr}}\) and \(u_{\eta_{i}}^{\mathrm{be}}\) defined by
\[
u_{\eta_{i}}^{\mathrm{jx}}(x)
= 
\begin{cases}
	u_{\eta_{i}}^{\mathrm{pr}}(x)	& \text{if \(x \in G_{\eta_i}^{m}\),}\\
	u_{\eta_{i}}^{\mathrm{be}}(x)	& \text{if \(x \in E_{\eta_i}^{m}\),}
\end{cases}
\] 
belongs to \((W^{1, p} \cap L^{\infty})(Q^{m}_{1 + \gamma}; N^{n})\). 
Moreover, by \eqref{eq1350}--\eqref{eq2585} we have
\begin{equation}\label{eq2261}
\lim_{i\to \infty} \norm{Du_{\eta_{i}}^{\mathrm{jx}}-Du}_{L^{p}(Q^{m}_{1+\gamma})}=0.
\end{equation}

Finally, we establish that
\begin{equation}\label{eq2262}
\lim_{i\to \infty} \norm{u_{\eta_{i}}^{\mathrm{jx}}-u}_{L^{p}(Q^{m}_{1+\gamma})}=0.
\end{equation}
To this aim, we introduce the auxiliary sequence \((v_i)_{i\in \N}\) in the space \(L^{p}(Q^{m}_{1+\gamma};\R^\nu)\) defined by
\[
v_i(x)
= 
\begin{cases}
	u_{\eta_{i}}^{\mathrm{sm}}(x)	& \text{if \(x \in G_{\eta_i}^{m}\),}\\
	u_{\eta_{i}}^{\mathrm{be}}(x)	& \text{if \(x \in E_{\eta_i}^{m}\).}
\end{cases}
\] 
Observe that \(u_{\eta_{i}}^{\mathrm{jx}}=\Pi\circ v_i\). 

As a consequence of \eqref{eq2232}, the sequence  \((u_{\eta_{i}}^{\mathrm{sm}})_{i\in \N}\) converges to \(u\) in measure on \(Q^{m}_{1+\gamma}\). In view of \eqref{eq2533}, this is also true for \((v_i)_{i\in \N}\). Since \(\Pi\) is continuous, \((u_{\eta_{i}}^{\mathrm{jx}})_{i\in \N}\) converges in measure to \(\Pi\circ u=u\) on  
\(Q^{m}_{1+\gamma}\). Together with \eqref{eq2261}, this implies \eqref{eq2262} as in the end of the proof of Proposition~\ref{propositionIntrinsicSobolev}.
This  completes the proofs  of Theorem~\ref{theoremMainNonInteger} and the sufficiency part of Theorem~\ref{theorem_Ap_CNS}; the necessity part follows from Proposition~\ref{proposition_thm_Ap_CN} above.
\qed



\begin{proof}[Proofs of Corollaries~\ref{corollaryDensitySmoothNonInteger} and~\ref{corollaryDensitySmoothInteger}]
\((\Longrightarrow)\) Let \(1\leq p \leq m\), and assume that the set \(C^{\infty}(\overline{Q^m} ; N^n)\) is dense in \(W^{1,p}(Q^m ; N^n)\). 
When \(p<m\), this implies that \(\pi_{\floor{p}}(N^n)\) is trivial as in the case when \(N^n\) is compact \cites{Schoen-Uhlenbeck, Bethuel-Zheng}, with the same proof.
When \(p\in \{2,\dotsc,m\}\), the set \((W^{1,p}\cap L^{\infty})(Q^m ; N^n)\) is then dense in \(W^{1,p}(Q^m ; N^n)\), and it follows from Proposition~\ref{proposition_thm_Ap_CN} that \(N^n\) satisfies the trimming property of dimension \(p\).  

\((\Longleftarrow)\) Conversely, if \(1\leq p \leq m\) is not an integer and \(\pi_{\floor{p}}(N^n)\) is trivial, then Proposition~\ref{proposition_class_R} implies that \(C^{\infty}(\overline{Q^m} ; N^n)\) is dense in \((W^{1,p}\cap L^{\infty})(Q^m ; N^n)\). It also follows from Theorem~\ref{theoremMainNonInteger}
that the set \((W^{1,p}\cap L^{\infty})(Q^m ; N^n)\) is dense in \(W^{1,p}(Q^m ; N^n)\). Hence, \(C^{\infty}(\overline{Q^m} ; N^n)\) is dense in \(W^{1,p}(Q^m ; N^n)\). This completes the proof of Corollary~\ref{corollaryDensitySmoothNonInteger}.

Finally, the sufficiency part of Corollary~\ref{corollaryDensitySmoothInteger} follows from Theorem~\ref{theorem_Ap_CNS} and Proposition~\ref{proposition_class_R} when $p\in \{1, \dotsc, m-1\}$, and from Theorem~\ref{theorem_Ap_CNS} and Proposition~\ref{proposition_smooth_bounded_density} when $p=m$.
This completes the proof of Corollary~\ref{corollaryDensitySmoothInteger}.
\end{proof}


\section*{Acknowledgments}
The authors would like to thank the referee for his or her detailed reading and comments that have improved the presentation of the paper and H.~Brezis for calling their attention to Marcus and Mizel's paper~\cite{Marcus-Mizel}.
Part of this work was carried out while the first author (PB) was visiting IRMP with support from UCL.
The second (ACP) and third (JVS) authors were supported by the Fonds de la Recherche scientifique---FNRS; ACP: Crédit de Recherche (CDR) J.0026.15 and JVS: Mandat d'Impulsion scientifique (MIS) F.452317.

\begin{bibdiv}
\begin{biblist}

\bib{Anderson-1990}{article}{
   author={Anderson, Michael T.},
   title={Convergence and rigidity of manifolds under Ricci curvature
   bounds},
   journal={Invent. Math.},
   volume={102},
   date={1990},
   pages={429--445},
}

\bib{Bethuel}{article}{
   author={Bethuel, Fabrice},
   title={The approximation problem for Sobolev maps between two manifolds},
   journal={Acta Math.},
   volume={167},
   date={1991},
   pages={153--206},
}

\bib{Bethuel-Chiron}{article}{
   author={Bethuel, Fabrice},
   author={Chiron, David},
   title={Some questions related to the lifting problem in Sobolev spaces},
   conference={
      title={Perspectives in nonlinear partial differential equations},
   },
   book={
      series={Contemp. Math.},
      volume={446},
      publisher={Amer. Math. Soc.},
      place={Providence, RI},
   },
   date={2007},
   pages={125--152},
}

\bib{Bethuel-Zheng}{article}{
   author={Bethuel, Fabrice},
   author={Zheng, Xiao Min},
   title={Density of smooth functions between two manifolds in Sobolev
   spaces},
   journal={J. Funct. Anal.},
   volume={80},
   date={1988},
   pages={60--75},
}

\bib{Bousquet-Ponce-VanSchaftingen}{article}{
   author={Bousquet, Pierre},
   author={Ponce, Augusto C.},
   author={Van Schaftingen, Jean},
  title={Strong density for higher order Sobolev spaces into compact manifolds},
  volume={17},
  date={2015}, 
  pages={763--817},
  journal={J. Eur. Math. Soc. (JEMS)}
}

\bib{Bousquet-Ponce-VanSchaftingen-2017}{article}{
   author={Bousquet, Pierre},
   author={Ponce, Augusto C.},
   author={Van Schaftingen, Jean},
  title={Weak approximation by bounded Sobolev maps with values into complete manifolds},
  note={Submitted for publication},
  eprint={arxiv:1701.07627},
}

\bib{Brezis-Li}{article}{
   author={Brezis, Haim},
   author={Li, Yanyan},
   title={Topology and Sobolev spaces},
   journal={J. Funct. Anal.},
   volume={183},
   date={2001},
   pages={321--369},
}

\bib{Brezis-Nirenberg}{article}{
   author={Brezis, H.},
   author={Nirenberg, L.},
   title={Degree theory and BMO. I. Compact manifolds without boundaries},
   journal={Selecta Math. (N.S.)},
   volume={1},
   date={1995},
   pages={197--263},
}

\bib{ConventVanSchaftingen2016}{article}{ 
  author={Convent, Alexandra},
  author={Van Schaftingen, Jean},
  title={Intrinsic colocal weak derivatives and Sobolev spaces between manifolds}, 
  journal={Ann. Sc. Norm. Super. Pisa Cl. Sci. (5)},
  volume={16},
  date={2016}, 
  pages={97--128},
}

\bib{DaiShojiUrakawa1997}{article}{
   author={Dai, Yu-Jie},
   author={Shoji, Michihiko},
   author={Urakawa, Hajime},
   title={Harmonic maps into Lie groups and homogeneous spaces},
   journal={Differential Geom. Appl.},
   volume={7},
   date={1997},
   pages={143--160},
   issn={0926-2245},
}
		
\bib{Evans-Gariepy}{book}{
   author={Evans, Lawrence C.},
   author={Gariepy, Ronald F.},
   title={Measure theory and fine properties of functions},
   series={Textbooks in Mathematics},
   edition={2},
   publisher={CRC Press},
   address={Boca Raton, FL},
   date={2015},
}

\bib{Federer-Fleming}{article}{
  author={Federer, Herbert},
  author={Fleming, Wendell H.},
  title={Normal and integral currents},
  journal={Ann. of Math. (2)},
  volume={72},
  date={1960},
  pages={458--520},
}

\bib{FocardiSpadaro2013}{article}{
   author={Focardi, Matteo},
   author={Spadaro, Emanuele},
   title={An intrinsic approach to manifold constrained variational
   problems},
   journal={Ann. Mat. Pura Appl. (4)},
   volume={192},
   date={2013},
   pages={145--163},
}

\bib{Hajlasz}{article}{
   author={Haj{\l}asz, Piotr},
   title={Sobolev mappings between manifolds and metric spaces},
   conference={
      title={Sobolev spaces in mathematics. I},
   },
   book={
      series={Int. Math. Ser. (N.Y.)},
      volume={8},
      publisher={Springer, New York},
   },
   date={2009},
   pages={185--222},
}

\bib{Hajlasz-Schikorra}{article}{
   author={Haj{\l}asz, Piotr},
   author={Schikorra, Armin},
  title={Lipschitz homotopy and density of Lipschitz mappings in Sobolev spaces},
  journal={Ann. Acad. Sci. Fenn. Math.},
  volume={39},
  date={2014}, 
  pages={593--604},
}

\bib{Hang-Lin}{article}{
   author={Hang, Fengbo},
   author={Lin, Fang-Hua},
   title={Topology of Sobolev mappings, II},
   journal={Acta Math.},
   volume={191},
   date={2003},
   pages={55--107},
}

\bib{Hang-Lin-III}{article}{
   author={Hang, Fengbo},
   author={Lin, Fang-Hua},
   title={Topology of Sobolev mappings, III},
   journal={Comm. Pure Appl. Math.},
   volume={56},
   date={2003},
   pages={1383--1415},
}

\bib{Hardt-Kinderlehrer-Lin}{article}{
  author={Hardt, R.},
  author={Kinderlehrer, D.},
  author={Lin, Fang-Hua},
  title={Stable defects of minimizers of constrained variational principles},
  journal={Ann. Inst. H. Poincar\'e Anal. Non Lin\'eaire},
  volume={5},
  date={1988},
  pages={297--322},
}

\bib{HKST}{article}{
   author={Heinonen, Juha},
   author={Koskela, Pekka},
   author={Shanmugalingam, Nageswari},
   author={Tyson, Jeremy T.},
   title={Sobolev classes of Banach space-valued functions and
   quasiconformal mappings},
   journal={J. Anal. Math.},
   volume={85},
   date={2001},
   pages={87--139},
}

\bib{Kosinski}{book}{
   author={Kosinski, Antoni A.},
   title={Differential manifolds},
   series={Pure and Applied Mathematics},
   volume={138},
   publisher={Academic Press, Inc., Boston, MA},
   date={1993},
   pages={xvi+248},
   isbn={0-12-421850-4},
}

\bib{Marcus-Mizel}{article}{
   author={Marcus, M.},
   author={Mizel, V. J.},
   title={Absolute continuity on tracks and mappings of Sobolev spaces},
   journal={Arch. Rational Mech. Anal.},
   volume={45},
   date={1972},
   pages={294--320},
}

\bib{Mueller-2009}{article}{
   author={M{\"u}ller, Olaf},
   title={A note on closed isometric embeddings},
   journal={J. Math. Anal. Appl.},
   volume={349},
   date={2009},
   pages={297--298},
   issn={0022-247X},
}

\bib{Nash-1954}{article}{
   author={Nash, John},
   title={$C^1$ isometric embeddings},
   journal={Ann. of Math. (2)},
   volume={60},
   date={1954},
   pages={383--396},
}

\bib{Nash-1956}{article}{
   author={Nash, John},
   title={The embedding problem for Riemannian manifolds},
   journal={Ann. of Math. (2)},
   volume={63},
   date={1956},
   pages={20--63},
}

\bib{Pakzad-Riviere}{article}{
   author={Pakzad, M. R.},
   author={Rivi{\`e}re, T.},
   title={Weak density of smooth maps for the Dirichlet energy between
   manifolds},
   journal={Geom. Funct. Anal.},
   volume={13},
   date={2003},
   pages={223--257},
}

\bib{Schoen-Uhlenbeck}{article}{
   author={Schoen, Richard},
   author={Uhlenbeck, Karen},
   title={Boundary regularity and the Dirichlet problem for harmonic maps},
   journal={J. Differential Geom.},
   volume={18},
   date={1983},
   pages={253--268},
}
\bib{Uhlenbeck1989}{article}{
   author={Uhlenbeck, Karen},
   title={Harmonic maps into Lie groups: classical solutions of the chiral
   model},
   journal={J. Differential Geom.},
   volume={30},
   date={1989},
   pages={1--50},
   issn={0022-040X},
}

\bib{Whitney}{book}{
   author={Whitney, Hassler},
   title={Geometric integration theory},
   publisher={Princeton University Press, Princeton, NJ},
   date={1957},
   pages={xv+387},
}

\end{biblist}

\end{bibdiv}

\end{document}